\documentclass[a4paper,10pt]{amsart}
\usepackage[matrix,arrow,tips,curve]{xy}
\usepackage[english]{babel}
\usepackage[leqno]{amsmath}
\usepackage{amssymb,amsthm}
\usepackage{amscd}
\usepackage{enumerate}

\input{xy}
\xyoption{all}

\newtheorem{thm}{Theorem}[subsection]
\newtheorem{lemma}[thm]{Lemma}

\newtheorem{prop}[thm]{Proposition}

\newtheorem{assumption}[]{Assumption}
\newtheorem{cor}[thm]{Corollary}

\newtheorem{conj}[thm]{Conjecture}
\newtheorem{prob}[]{Problem}
\theoremstyle{remark}
\newtheorem{remark}[thm]{Remark}
\theoremstyle{definition}
\newtheorem{defi}[thm]{Definition}
\newtheorem{nota}[thm]{}

\numberwithin{equation}{section}

\newtheorem{example}[thm]{Example}

\newcommand{\la}{\longrightarrow}

\newcommand{\ov}{\overline}
\newcommand{\un}{\underline}

\newcommand{\Set}{{\operatorname{Set}}^1}
\newcommand{\supp}{\operatorname{Supp}}

\newcommand{\Alb}{\operatorname{Alb}}

\newcommand{\Jac}{\operatorname{Jac}}

\def\e{\epsilon}

\newcommand{\Z}{\mathbb{Z}}
\newcommand{\R}{\mathbb{R}}
\newcommand{\N}{\mathbb{N}}


\newcommand{\SP}{\mathcal{SP}}
\newcommand{\OP}{\mathcal{OP}}

\newcommand{\Esep}{E(\Gamma)_{{\rm sep}}}
\def\md{\underline{d}}

\newcommand{\Del}{{\rm Del}}
\newcommand{\Vor}{{\rm Vor}}
\newcommand{\Fac}{{\rm Faces}}


\newcommand{\Mgt}{M_g^{\rm trop}}
\newcommand{\Agt}{A_g^{\rm trop}}
\newcommand{\Jgt}{J_g^{\rm trop}}
\newcommand{\tgt}{t_g^{\rm trop}}

\begin{document}

\title{Torelli theorem for graphs and tropical curves}

\author{Lucia Caporaso}
\address{Dipartimento di Matematica,
Universit\`a Roma Tre,
Largo S. Leonardo Murialdo 1,
00146 Roma (Italy)}
\email{caporaso@mat.uniroma3.it}

\author{Filippo Viviani}
\address{Dipartimento di Matematica,
Universit\`a Roma Tre,
Largo S. Leonardo Murialdo 1,
00146 Roma (Italy)}
\email{viviani@mat.uniroma3.it}


\maketitle
\begin{center}\today

\end{center}


\begin{abstract}
Algebraic curves have a discrete analogue in finite graphs.
Pursuing this
analogy we prove a Torelli theorem for graphs. Namely,
we show that two graphs have the same Albanese torus if and only if the
graphs obtained  from them  by contracting all  separating edges are 2-isomorphic.
In particular,   the  strong Torelli  theorem holds for
$3$-connected graphs.
Next, using the correspondence between compact tropical curves and metric  graphs,  we prove a tropical Torelli theorem   giving necessary and sufficient
conditions for two tropical curves to have the same principally polarized tropical Jacobian.
By contrast, we prove that, in a suitably defined sense, the tropical Torelli map has degree one.
Finally
we describe some natural posets associated to a graph and prove
that they characterize its  Delaunay decomposition.
\end{abstract}
\section{Introduction}

The
 analogy between graphs and algebraic curves has been a source of inspiration
 both in combinatorics and algebraic geometry.
 In this frame of mind,
 M. Kotani and T. Sunada (see \cite{KS})
introduced the  Albanese torus, $\Alb(\Gamma)$,
and the Jacobian torus, $\Jac(\Gamma)$,  of a graph $\Gamma$;
see section~\ref{at} for the precise definition.

By  \cite{KS},  $\Alb(\Gamma)$   and   $\Jac(\Gamma)$ are dual flat tori of dimension
  equal to $b_1(\Gamma)$, the first  Betti number  of
$\Gamma$. As $b_1(\Gamma)$ is the maximum number of linearly  independent cycles in $\Gamma$,
it can be viewed as the analog for a graph of the genus of a Riemann
surface.
In analogy with  the classical Torelli theorem for curves, it is natural to ask the following question:

\begin{prob}
\label{p1}
When are two graphs $\Gamma$ and $\Gamma'$  such that $\Alb(\Gamma)\cong \Alb(\Gamma')$?
\end{prob}
There exist in the literature other versions of such a problem
(see for example  \cite{BdlHN}, or   \cite{BN}); the statement of Problem~\ref{p1}
 is due to T. Sunada.
One of the goals of this paper is to   answer   the above question.
In our  Theorem~\ref{main-thm},
we prove that $\Alb(\Gamma)\cong \Alb(\Gamma')$ if and only if the two graphs obtained
from $\Gamma$ and $\Gamma'$ by contracting all of their separating edges
are cyclically equivalent (or  2-isomorphic,  cf. Definition~\ref{cyc-equiv}).

Using a result of Whitney, we obtain  that the Torelli theorem is true for $3$-connected
graphs; see Corollary \ref{main-cor}.
This answers a problem implicitly posed in \cite[Page 197]{BdlHN}, where the authors
ask, albeit indirectly, whether there exist two non isomorphic, $3$-connected   graphs
with isomorphic Albanese torus.

Let us now turn to another, recently discovered aspect of the analogy between graphs and curves,
that is, the tight connection between tropical curves and graphs.
By results of G. Mikhalkin and I. Zharkov,  see \cite{MIK3} and  \cite{MZ}, there exists  a natural bijection between the set of tropical equivalence classes of compact tropical curves  and metric graphs
all of whose vertices have valence at least 3.

Observe now that compact tropical curves, just like compact Riemann surfaces,
are endowed with a Jacobian variety, which is a principally polarized  tropical Abelian variety; see Section~\ref{trp} for details.
  The following Torelli-type question arises

\begin{prob}
\label{p2}
Can two compact tropical curves     have
isomorphic Jacobian varieties? If so, when?
\end{prob}
It is well known  (see  \cite[Sect. 6.4]{MZ})  that  the answer to the first part of this question is ``yes".
 In Theorem~\ref{main-thmt} we precisely characterize which tropical curves
  have  the same Jacobian variety.
In particular, we prove that for curves whose associated graph is 3-connected,
the   Torelli theorem holds in strong form, i.e. two such curves are tropically equivalent if and only if their polarized Jacobians are isomorphic. 

The proof of Theorem~\ref{main-thmt} is based on a Torelli theorem for metric graphs,
 Theorem~\ref{main-thml}, which is interesting in its own right, and uses essentially the same ideas as the proof of Theorem~\ref{main-thm}.
 The statement of  Theorem~\ref{main-thml} is slightly more technical, but can be phrased as follows:
 two metric graphs have the same Albanese torus if and only if
 they have the same 3-edge connected class (defined in \ref{3ec} and \ref{3ecl}).

A key ingredient   turns
out to be the Delaunay decomposition $\Del(\Gamma)$ of a graph $\Gamma$.
$\Del(\Gamma)$ is well known to be a powerful tool, and has been investigated
 in, among others, \cite{nam}, \cite{OS} and \cite{alex},  which have been quite useful in the writing of this paper.
 In Proposition \ref{Del-equ}, we characterize when two graphs have the same Delaunay decomposition.

The last section of the paper gives   other characterizations of
a graph, or rather, of the 3-edge connected class of a graph.
These  characterizations, given in  Theorem~\ref{final-thm}, use  three remarkable posets (i.e. partially ordered sets), $\SP_{\Gamma}$, $\OP_{\Gamma}$  and
$\ov{\OP_{\Gamma}}$.
The poset $\SP_{\Gamma}$
is the set of
spanning subgraphs of $\Gamma$ that are free from separating edges.
The  maximal elements of $\SP_{\Gamma}$ are the so-called C1-sets
(see Definition~\ref{C1}), which play  a crucial role in the  previous sections.
The two posets $\OP_{\Gamma}$ and $\ov{\OP_{\Gamma}}$, defined in Section~\ref{op},
are associated to totally cyclic orientations;
we conjecture a geometric interpretation for them in \ref{geo-conj}, relating to an interesting question posed in \cite{BdlHN}.

Not only is this last section
 related to the  Torelli theorems in the previous parts, but also, our interest in it
 is motivated by a different, open, Torelli problem.
 The material of Section~\ref{pos}  will in fact be applied  in our ongoing project,
\cite{CV}, in order to
describe the combinatorial structure of the compactified Jacobian of a singular  algebraic curve,
and   generalize the Torelli theorem to  stable  curves.

In the Appendix, assuming some natural facts
about the Torelli map $\tgt:\Mgt\to \Agt$
(facts that are commonly expected, yet still awaiting
to be fully settled in the literature), we prove 
that $\tgt$ is of tropical degree 
one to its image, even though it is not injective; see Theorem \ref{MZ-conj}.
This proves a conjecture of Mikhalkin-Zharkov (see \cite[Sect. 6.4]{MZ}).

\emph{Acknowledgements.}
We thank   L. Babai for a stimulating e-mail correspondence,
M. Baker for   pointing  us the paper \cite{art}, and G. Mikhalkin and I. Zharkov for
precious comments on the tropical Torelli map,   which prompted us to add the Appendix.
The second author would like to thank  T. Sunada for a series of
lectures at Humboldt University of Berlin,
during which he learnt about the Torelli problem for graphs, and
G. Mikhalkin for a series of lectures at the INdAM workshop``Geometry of
projective varieties", during which he learnt about the Torelli problem for
tropical curves.
Finally, we  benefitted from  a 
very thoughtful report  by
an anonymous referee, to whom we are grateful.

\section{Preliminaries}
\subsection{The Albanese torus of a graph}
\label{at}
Throughout the paper  $\Gamma$  will be a finite  graph
(loops and multiple edges are allowed); we denote by $V(\Gamma)$ its set
of vertices    and by $E(\Gamma)$ its set of edges.

We recall the definition of the Albanese torus, from \cite{KS}.
Fix an orientation of $\Gamma$ and let $s, t: E(\Gamma)\to V(\Gamma)$ be the two maps sending an
oriented edge to its source and target point, respectively.
Notice that the Albanese torus will not depend on the chosen orientation.
Consider the spaces of chains of $\Gamma$ with values in an abelian group $A$:
$$
  C_0(\Gamma, A):=\oplus_{v\in V(\Gamma)} A \cdot v ,\hskip.8in  C_1(\Gamma, A):=\oplus_{e\in E(\Gamma)}A \cdot e .
$$
Define, as usual, a boundary map
$$\begin{aligned}
\partial : C_1(\Gamma, A) & \longrightarrow C_0(\Gamma, A) \\
e & \mapsto t(e)-s(e).
\end{aligned}$$
The first homology group
of $\Gamma$ with values in $A$ is   $H_1(\Gamma, A):=\ker\partial$.

If $A=\R$, we define the scalar product $(,)$ on $C_1(\Gamma,\R)$ by
$$(e,e')=\begin{cases}
1 & \text{ if } e=e', \\
0 & \text{ otherwise.}
\end{cases}
$$
 We continue to denote by $(,)$
 the induced
scalar product on    $H_1(\Gamma,\R)$.
The subspace $H_1(\Gamma,\Z)$
is a lattice inside $H_1(\Gamma,\R)$.

\begin{defi}\cite{KS}
\label{alb}
The {\it Albanese torus} $\Alb(\Gamma)$ of $\Gamma$ is
$$
\Alb(\Gamma):=\Bigl(H_1(\Gamma,\R)/H_1(\Gamma,\Z); (,)\Bigr)
$$
 with the flat metric derived from the scalar product $(,)$.
 \end{defi}

We have  $\dim \Alb(\Gamma)=b_1(\Gamma)$ where $b_1(\Gamma)$ is   the  first  Betti number:
$$b_1(\Gamma)={\rm{rank}_{\Z}} H_1(\Gamma, \Z)=\#\{\text{connected components of } \Gamma\} -
\#V(\Gamma) +\#E(\Gamma).
$$

There is also the cohomological version of the previous construction,
(we refer to \cite{KS} for the details).
One obtains another torus, called the {\it Jacobian torus} $\Jac(\Gamma)$,
which has the following form
$
\Jac(\Gamma):=(H^1(\Gamma,\R)/H^1(\Gamma,\Z);\langle,\rangle).
$
As we said,  $\Jac(\Gamma)$ and $\Alb(\Gamma)$ are dual flat tori.

There exist in the literature several definitions  of  Albanese and  Jacobian      torus
of a graph,
related to one another by means of standard dualities.
In particular, we need to briefly explain the relation with   \cite{BdlHN}.
Our lattice $H^1(\Gamma,\Z)$ is the dual lattice, in $(H^1(\Gamma,\R);\langle,\rangle)$,
of the so-called lattice of
integral flows $\Lambda^1(\Gamma)\subset  H^1(\Gamma,\R)$
studied in \cite{BdlHN}. In particular, the Albanese torus $\Alb(\Gamma)$ determines
the lattice $\Lambda^1(\Gamma)$ and conversely (see Proposition 3 of loc.cit.).

\subsection{Cyclic equivalence and connectivity}
\label{Sec2.2}

\begin{nota}
\label{notgraph}
We  set some notation that will be used throughout.
Let $S\subset E(\Gamma)$ be a subset of edges of a graph $\Gamma$.
We associate to   $S$ two graphs, denoted $\Gamma\smallsetminus S$
and $\Gamma(S)$, as follows

$\bullet$ The graph $\Gamma\smallsetminus S$ is, as the notation indicates,
  obtained from $\Gamma$ by removing the edges in $S$ and by leaving the   vertices unchanged.
Thus $V(\Gamma\smallsetminus S)=V(\Gamma)$
(so that  $\Gamma\smallsetminus S$ is a    spanning subgraph)
and
$E(\Gamma\smallsetminus S)=E(\Gamma)\smallsetminus S$.

$\bullet$ The graph $\Gamma(S)$ is obtained from $\Gamma$ by contracting  all the
edges not in $S$, so that the set of edges of $\Gamma(S)$ is equal to $S$. There is a surjective contraction
map $\Gamma \to \Gamma(S)$  which contracts to a point every connected component of $\Gamma \smallsetminus S$.
Notice that   $\Gamma(S)$ is connected if and only if so is $\Gamma$.
For example, $\Gamma(E(\Gamma))=\Gamma$, and,
if $c$ is the number of connected components of $\Gamma$, then   $\Gamma (\emptyset)$ is a set of $c$ isolated points (i.e. $\Gamma (\emptyset)$ has $c$ vertices and no edges).

\begin{example}\label{disGamma(S)}
Here  is an example of a $\Gamma(S)$, with the   contraction map  $\Gamma\to\Gamma(S)$,
where $S=\{e_1, e_2\}\subset E(\Gamma)$:

\begin{figure}[!htp]
$$\xymatrix@=1pc{
&*{\bullet} \ar@{-}[rr] \ar@{-}[dd] \ar@{-}@/_/[dd] & &*{\bullet}
\ar@{-}[dd]
\ar@{-}@/_/[ll]\ar@{-}[rr]^{e_1}& & *{\bullet} \ar@{-}[dd]
\ar@{-}@/^.5pc/[rrd]\ar@{-}[drr]&&&&&&
& & &\\
\Gamma \, =  &&&&  &   &&*{\bullet} \ar@{-}@(ur,dr)
&&\ar[rr] & &&*{\bullet} \ar@{-}@/^.5pc/[rr]^{e_1} 
\ar@{-}@/_.5pc/[rr]_{e_2} &&*{\bullet} & \, = \Gamma(S) \\
& *{\bullet} \ar@{-}[rr] & &*{\bullet} \ar@{-}[rr]^{e_2}& &
*{\bullet} \ar@{-}[urr]&&
&&&&    & &&
}$$
\caption{Example of $\Gamma(S)$ with $S=\{e_1, e_2\}$.}
\end{figure}
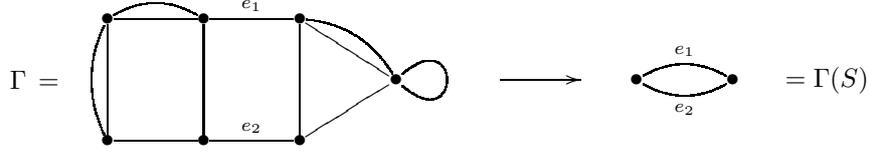
\end{example}


We have the useful additive formula
\begin{equation}
\label{b1}
b_1(\Gamma)=b_1(\Gamma\smallsetminus S)+b_1(\Gamma(S)).
\end{equation}

If $\Gamma $ is  a connected graph, a {\it separating edge} is an   $e\in E(\Gamma)$
such that $\Gamma \smallsetminus e$ is not connected. If  $\Gamma$
is not connected
we say that an edge
is separating if it is   separating for the connected component containing it.
We denote by
$E(\Gamma)_{{\rm sep}}$ the set of  separating edges  of   $\Gamma $.

We say that a  graph $\Delta$ is a {\it cycle} if it is connected, free from separating edges and
if $b_1(\Delta)=1$. We call $\#E(\Delta)=\#V(\Delta)$ the length of $\Delta$.
\end{nota}

\begin{defi}\label{cyc-equiv}
Let  $\Gamma$ and $\Gamma'$ be two graphs.
We say that a bijection between their edges,
$\e:E(\Gamma)\to E(\Gamma ')$, is {\it cyclic} if it induces a bijection between the cycles of $\Gamma$ and the cycles of $\Gamma'$.

We say that  $\Gamma$ and $\Gamma'$ are {\it cyclically equivalent} or {\it 2-isomorphic},
 and we write $\Gamma\equiv_{\rm cyc} \Gamma'$, if there exists a cyclic bijection
  $\e:E(\Gamma)\to E(\Gamma')$.

The cyclic equivalence class of $\Gamma$ will be  denoted by $[\Gamma]_{\rm cyc}$.
\end{defi}
$[\Gamma]_{\rm cyc}$ is described by the following  result of Whitney (see also \cite[Sec. 5.3]{Oxl}).

\begin{thm}[\cite{Whi}]\label{cycequ-moves}
Two graphs $\Gamma$ and $\Gamma'$ are cyclically equivalent if and only if
they can be obtained from one  another
via   iterated applications of the following two moves:
\begin{enumerate}
\item[(1)] Vertex gluing: $v_1$ and $v_2$ are identified to the separating vertex $v$, and conversely (so that $\Gamma_1\coprod \Gamma _2 \equiv_{\rm{cyc}}\Gamma$).
\begin{figure}[!htp]
$$\xymatrix@=1pc{
&&*{\bullet} \ar@{-}[dd]\ar@{-}@/_.5pc/[dd]\ar@{-}[drr]^>{v_1}&&
&&*{\bullet}\ar@{-}[dd] \ar@{-}[dl]_>{v_2} \ar@{-}[dr] & & && &
*{\bullet} \ar@{-}[dd]\ar@{-}@/_.5pc/[dd]\ar@{-}[drr]^>{v}&&
&*{\bullet}\ar@{-}[dd] \ar@{-}[dl] \ar@{-}[dr] & &\\
\Gamma_1\ar@{=}[r]&&&&*{\bullet}&*{\bullet} \ar@{-}[rr] |!{[r]}\hole & &*{\bullet}
\ar@{=}[r]& \Gamma_2 &\equiv_{\rm{cyc}} &&
&&*{\bullet} \ar@{-}[rr] |!{[r]}\hole & &*{\bullet}\ar@{=}[r] &\Gamma          \\
&&*{\bullet} \ar@{-}[urr]&&&&*{\bullet}\ar@{-}[ru]\ar@{-}[lu]& && &
&*{\bullet} \ar@{-}[urr]&&    &
*{\bullet}\ar@{-}[ru]\ar@{-}[lu]
}$$
\caption{Two graphs $\Gamma_1$ and $\Gamma_2$ attached at $v_1\in V(\Gamma_1)$
and $v_2\in V(\Gamma_2)$.}
\label{vert-gluing}
\end{figure}
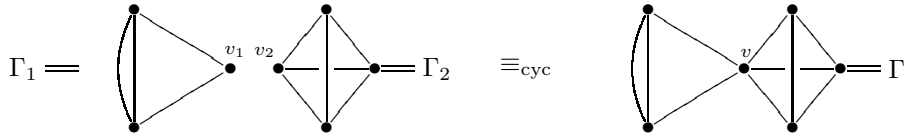
\item[(2)] Twisting:   the double arrows below mean identifications.
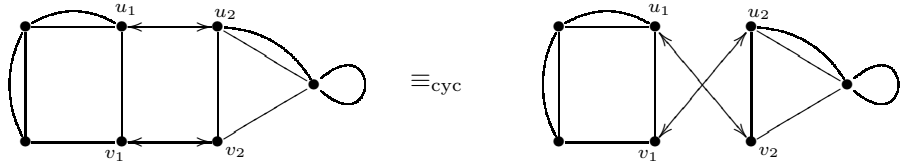
\begin{figure}[!htp]
$$\xymatrix@=1pc{
*{\bullet} \ar@{-}[rr] \ar@{-}[dd] \ar@{-}@/_/[dd] & &*{\bullet} \ar@{-}[dd]
\ar@{-}@/_/[ll]_<{u_1}\ar@{<->}[rr]& & *{\bullet} \ar@{-}[dd]
\ar@{-}@/^.5pc/[rrd]^<{u_2}\ar@{-}[drr]&&&&&&
*{\bullet} \ar@{-}[rr] \ar@{-}[dd] \ar@{-}@/_/[dd] &
&*{\bullet} \ar@{-}[dd] \ar@{-}@/_/[ll]_<{u_1} \ar@{<->}[ddrr]&&
*{\bullet} \ar@{-}[dd]\ar@{-}@/^.5pc/[rrd]^<{u_2}\ar@{-}[drr]&& \\
&&&  &   &&*{\bullet} \ar@{-}@(ur,dr)
&&\equiv_{\rm{cyc}} &&&&& &    &&*{\bullet} \ar@{-}@(ur,dr)                 \\
*{\bullet} \ar@{-}[rr]_>{v_1} & &*{\bullet} \ar@{<->}[rr]& &
*{\bullet} \ar@{-}[urr]_<{v_2}&&
&&&& *{\bullet} \ar@{-}[rr]_>{v_1} &  &*{\bullet} \ar@{<->}[uurr]&&
*{\bullet} \ar@{-}[urr]_<{v_2}&&
}$$
\caption{A twisting at a separating pair of vertices.}
\label{twist}
\end{figure}
\end{enumerate}
\end{thm}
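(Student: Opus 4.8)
The statement is Whitney's 2-isomorphism theorem, and the clean point of departure is to notice that, by the notion of \emph{cycle} fixed in \ref{notgraph}, a cyclic bijection $\e\colon E(\Gamma)\to E(\Gamma')$ (Definition~\ref{cyc-equiv}) is nothing but an isomorphism of the associated cycle (graphic) matroids: cycles are the circuits, separating edges are the coloops, loops are the matroid loops. The plan is to prove the two implications separately, the ``only if'' direction being the substantial one. For the ``if'' direction it suffices to check that each move preserves the collection of cycles, viewed as subsets of the (identified) edge set. In the vertex gluing of Figure~\ref{vert-gluing} no edge is created or destroyed and the glued vertex is separating, so every cycle is confined to one of $\Gamma_1,\Gamma_2$; hence the cycles of $\Gamma$ are exactly those of $\Gamma_1$ together with those of $\Gamma_2$, as before the gluing. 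In the twisting of Figure~\ref{twist} at a separating pair of vertices, a cycle either lies entirely in one of the two parts, where it is unaffected, or it crosses the pair, in which case it is the union of a path joining the two vertices of the pair in each part; the twist replaces the path on one side by the same edges with interchanged endpoints, so the union is again a cycle on the same edge set. Thus both moves produce cyclically equivalent graphs.

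For the converse, fix a cyclic bijection $\e$ and reduce to highly connected graphs. Since a separating edge lies on no cycle, $\e$ matches $\Esep$ with the separating edges of $\Gamma'$; more importantly, the relation ``$e$ and $f$ lie on a common cycle'', together with its transitive closure, is read off from $\e$ and partitions the non-separating edges into the $2$-connected blocks of the graph. Hence $\e$ carries each block of $\Gamma$ to a block of $\Gamma'$ and restricts to a cyclic bijection between them. Because vertex gluing (move (1)) and its inverse (cleaving at a separating vertex) allow one to rearrange the blocks of a graph along any tree without disturbing cyclic equivalence, I may reduce to matching a single block of $\Gamma$ with the corresponding block of $\Gamma'$; that is, I may assume both graphs are $2$-connected (loopless and bridgeless).

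If a $2$-connected graph is not $3$-connected it has a separating pair of vertices, splitting it as $\Gamma_1\cup\Gamma_2$ glued along two vertices; such $2$-separations are again visible in the cycle structure, so $\e$ transports a $2$-separation of $\Gamma$ to one of $\Gamma'$. Inducting on $\#E(\Gamma)$, I would apply a twisting (move (2)) at this pair to align the two sides as prescribed by $\e$, and then invoke the inductive hypothesis on the pieces obtained by splitting off along the pair (using the additive formula~\eqref{b1} to control the Betti numbers of the pieces). In the language of the Tutte/SPQR decomposition this matches the decomposition trees of $\Gamma$ and $\Gamma'$ into cycles, bonds and $3$-connected pieces, the cycles and bonds being rigid up to the twists already performed, and leaves only the $3$-connected pieces to be treated.

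The main obstacle is the base case: two \emph{$3$-connected} cyclically equivalent graphs must be isomorphic as graphs, no move being applicable since a $3$-connected graph has neither a separating vertex nor a separating pair. Here the matroid must determine the graph. The minimal edge cuts (bonds) of $\Gamma$ are the cocircuits of the cycle matroid, hence are determined by $\e$; the real work is to single out intrinsically, among all bonds, the \emph{vertex stars} (the sets of edges incident to a single vertex), for once these are recognized the incidence relation between edges and vertices, and thus $\Gamma$ itself, is reconstructed from $\e$, giving $\Gamma\cong\Gamma'$ and closing the induction. The characterization of vertex stars is exactly where $3$-connectivity is indispensable: a bond is a vertex star precisely when one side of the cut spans no edge, and $3$-connectivity is what makes this condition detectable from the matroid alone (this is the uniqueness of the graph realization of a $3$-connected graphic matroid, cf. \cite[Sec. 5.3]{Oxl}). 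I expect this reconstruction of the vertices from the cocircuits to be the crux of the entire argument.
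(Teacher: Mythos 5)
First, a point of reference: the paper offers no proof of this statement --- it is quoted as Whitney's theorem, with \cite{Whi} and \cite[Sec.~5.3]{Oxl} as the sources --- so there is no internal argument to compare yours against. Your translation of cyclic equivalence into isomorphism of cycle matroids is the right starting point, your ``if'' direction is complete and correct, and the overall strategy you outline (reduce to blocks by vertex gluings, handle $2$-separations by twistings, reduce to the $3$-connected case) is the classical route. But as a proof the proposal has three genuine gaps, all in the ``only if'' direction.

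(a) The base case is asserted, not proved. Recognizing the vertex stars among the cocircuits of a $3$-connected graphic matroid \emph{is} Whitney's uniqueness theorem; the criterion you propose (``a bond is a vertex star precisely when one side of the cut spans no edge'') is phrased in terms of the vertex realization you are trying to reconstruct, and you give no way to test it using only the cycle data carried by $\e$. This is the mathematical heart of the theorem and cannot be left as an expectation. (b) The transport of $2$-separations is subtler than claimed: a separating pair of vertices of a $2$-connected graph induces a $2$-separation of the cycle matroid, but not conversely --- for a polygon with $n\ge 4$ edges \emph{every} bipartition of the edge set into parts of size at least $2$ is a matroid $2$-separation, while only those into two paths come from separating pairs. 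So $\e$ transports matroid $2$-separations, and one must pass through the canonical Tutte decomposition into $3$-connected pieces, polygons and bonds before one can speak of ``the corresponding separating pair of $\Gamma'$''. (c) The induction does not close as stated: if you split $\Gamma$ along a separating pair $\{u,v\}$ into $\Gamma_1$ and $\Gamma_2$, the restriction of $\e$ to $E(\Gamma_1)$ is \emph{not} a cyclic bijection in the sense of Definition~\ref{cyc-equiv}, because the cycles of $\Gamma$ that cross the pair are not cycles of $\Gamma_1$. The standard remedy is to adjoin a virtual edge joining $u$ and $v$ to each piece and to extend the bijection by matching virtual edges; without this device the inductive hypothesis cannot be invoked on the pieces. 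None of these gaps indicates a wrong approach --- they are exactly the places where the published proofs do their work --- but each must be filled for the argument to stand.
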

Let us describe the above  twisting  move   more precisely. Let $u, v$ be a pair of separating vertices of $\Gamma$. Then $\Gamma$ is obtained from two graphs, $\Gamma_1$ and $\Gamma_2$, 
by identifying two pairs of vertices as follows: Let $u_i, v_i\in V(\Gamma_i)$ for $i=1,2$.
Then $\Gamma$ is given by attaching $\Gamma_1$ to $\Gamma _2$ by the two identifications $u_1=u_2=u$
and $v_1=v_2=u$. The twisting at the pair $u, v$  is the graph $\Gamma '$ obtained
by attaching $\Gamma_1$ to $\Gamma _2$ by the two identifications $u_1=v_2$
and $v_1=u_2$. 

\

We now recall the definitions of connectivity  (see for example \cite[Chap. 3]{Die}).
Let $k\geq 1$ be an integer. A graph $\Gamma$ having at least $k+1$-vertices is said to be $k$-{\it connected}  if the graph obtained from $\Gamma$ by removing any $k-1$ vertices, 
and all the edges adjacent to them, 
 is connected.
A graph $\Gamma$ having at least $2$-vertices is said to be $k$-{\it edge connected}  if the graph obtained from $\Gamma$ by removing any $k-1$   edges
is connected.

If $\Gamma $ is $k$-connected it is also $k$-edge connected,
but the converse fails.

$\Gamma$ is $1$-connected, or
 $1$-edge connected, if and only if it is connected.

 $\Gamma$ is $2$-edge connected if and only if it is connected and
$E(\Gamma)_{\rm sep}=\emptyset$.

3-edge connected graphs will play an important role, and will be   characterized in Corollary~\ref{cor3}.

\begin{remark}
\label{ } We shall frequently consider edge-contracting maps, for which 
we make the following useful  observation.
Let $\Gamma\to \Gamma '$ be a (surjective) map contracting some edge of $\Gamma$ to a point. Then
if $\Gamma$ is $k$-edge connected so is $\Gamma'$.
\end{remark}
\begin{remark}
\label{3v} If  $\Gamma$ is 3-connected,
the cyclic equivalence class of   $\Gamma$ contains only $\Gamma$.
Indeed, by Theorem~\ref{cycequ-moves} a move of type (1) can be performed only in the presence of a disconnecting vertex, and a move of type (2) in the presence of a separating pair of vertices.
\end{remark}

\subsection{C1-sets and connectivizations.}
\begin{defi}\label{C1}
Let $\Gamma$ be a graph and $S\subset E(\Gamma)$.

Suppose $\Gamma$  connected  and $\Esep=\emptyset$; we
 say that    $S$ is a \emph{C1-set} of $\Gamma$ if $\Gamma(S)$ is a cycle and if $\Gamma\smallsetminus S$ has no separating edge.

In general, let $\widetilde{\Gamma}:=\Gamma \smallsetminus E(\Gamma)_{{\rm sep}}$. We say that
$S$ is a C1-set of $\Gamma$ if $S$ is a C1-set of a connected component of $\widetilde{\Gamma}$.

We denote by $\Set \Gamma$ the set of C1-sets of $\Gamma$.
\end{defi}

For instance, the set $S$ in Example~\ref{disGamma(S)} is a C1-set.

The terminology ``C1" stands for ``Codimension 1", and   will be justified in \ref{codim}.
The following Lemma summarizes some useful properties of C1-sets.

 \begin{lemma}\label{C1lm}
Let $\Gamma$ be a graph and $e, e'\in E(\Gamma)$. Then
\begin{enumerate}[(i)]
\item
\label{C11} Every C1-set $S$ of $\Gamma$ satisfies $S\cap E(\Gamma)_{\rm sep}=\emptyset$.
\item
\label{C12} Every non-separating edge $e$ of $\Gamma$ is contained in a unique C1-set,  $S_e$.
If $\Esep=\emptyset$, then $S_e=E(\Gamma\smallsetminus e)_{\rm sep}\cup\{e\}$.
\item
\label{C13}
$e$ and $e'$ belong to the same C1-set if and only if they belong to the same cycles of $\Gamma$.
\item
\label{C14}
Assume  $\Gamma$ connected and $e$ and $ e'$  non-separating.
Then $e$ and $e'$ belong to the same C1-set  if and only if   $\Gamma\smallsetminus \{e, e'\}$ is disconnected ($(e,e')$ is called a {\emph {separating pair of edges}}).
\end{enumerate}\end{lemma}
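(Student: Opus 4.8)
The plan is to reduce all four statements to the case where $\Gamma$ is connected and $2$-edge-connected (i.e. $\Esep=\emptyset$), and then to analyze the single set $S_e:=E(\Gamma\smallsetminus e)_{\rm sep}\cup\{e\}$ through the bridge-decomposition of $\Gamma\smallsetminus e$. Part (\ref{C11}) is immediate: by Definition~\ref{C1} every C1-set is a C1-set of a connected component of $\widetilde\Gamma=\Gamma\smallsetminus\Esep$, hence is contained in $E(\Gamma)\smallsetminus\Esep$. For the reductions I would note that a cycle of $\Gamma$, being connected and separating-edge-free, uses no separating edge and so lies in a single component of $\widetilde\Gamma$; thus both the cycles through a non-separating $e$ and the C1-sets containing $e$ are computed inside the unique component $\Gamma_0\ni e$ of $\widetilde\Gamma$, which is $2$-edge-connected. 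This lets me assume $\Esep=\emptyset$ for (\ref{C12}) and (\ref{C13}).

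To prove existence in (\ref{C12}), set $T:=E(\Gamma\smallsetminus e)_{\rm sep}$. Since $e$ is non-separating, $\Gamma\smallsetminus e$ is connected; contracting each of its maximal bridgeless pieces to a point turns $\Gamma\smallsetminus e$ into a tree $\mathcal{T}$ whose edge set is exactly $T$. Because $\Gamma$ itself has no separating edge, adding $e$ back must destroy every bridge of $\Gamma\smallsetminus e$; as the unique cycle created by adjoining $e$ to $\mathcal{T}$ is the path between the two pieces containing the endpoints of $e$, this forces $\mathcal{T}$ to equal that path, with all of $T$ lying on it. Hence $\Gamma(S_e)=\mathcal{T}+e$ is a polygon, i.e. a cycle, while $\Gamma\smallsetminus S_e=(\Gamma\smallsetminus e)\smallsetminus T$ is the disjoint union of the bridgeless pieces and so is separating-edge-free; thus $S_e$ is a C1-set. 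The same picture shows that any cycle of $\Gamma$ meeting $S_e$ must traverse all of $T$ and use $e$, hence contains all of $S_e$, whereas an edge $f\notin S_e$ lies in a bridgeless piece and therefore on a cycle avoiding $e$. This identifies $S_e$ with the set of edges lying on exactly the same cycles as $e$, which will feed directly into (\ref{C13}).

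For uniqueness, let $S$ be any C1-set with $e\in S$. To see $S\subseteq S_e$, suppose some $f\in S$ lay on a cycle $C$ with $e\notin C$. The contraction $\Gamma\to\Gamma(S)$ induces $H_1(\Gamma,\Z)\to H_1(\Gamma(S),\Z)\cong\Z$ sending the class of $C$ to its restriction $\sum_{g\in S\cap C}\pm g$; since $\Gamma(S)$ is a polygon, its homology is generated by the class supported with coefficients $\pm1$ on all of $S$, so the nonzero contribution of $f$ forces the coefficient of $e$ to be nonzero, i.e. $e\in C$, a contradiction. For the reverse inclusion I would invoke the second defining property of a C1-set: if some $f\in S_e\smallsetminus S$ existed, then $f\in T$ lies on no cycle of $\Gamma\smallsetminus e$, hence on no cycle of the subgraph $\Gamma\smallsetminus S$, so $f$ would be a separating edge of $\Gamma\smallsetminus S$, contradicting that $\Gamma\smallsetminus S$ has none. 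Therefore $S=S_e$, giving uniqueness in (\ref{C12}); combined with the identification above, $e$ and $e'$ share a C1-set iff $S_e=S_{e'}$ iff they lie on the same cycles, which is (\ref{C13}) for edges lying on some cycle (separating edges lie on no cycle and, by (\ref{C11}), in no C1-set).

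Finally, (\ref{C14}) follows from (\ref{C12}). After reducing to the block $\Gamma_0$ containing $e$ and $e'$ — using that a path leaving a $2$-edge-connected component can only return through the same bridge, so it cannot reconnect pieces separated inside $\Gamma_0$, whence $\Gamma\smallsetminus\{e,e'\}$ is disconnected iff $\Gamma_0\smallsetminus\{e,e'\}$ is — the edges $e,e'$ share a C1-set iff $e'\in S_e\smallsetminus\{e\}=T$, i.e. iff $e'$ is a separating edge of $\Gamma_0\smallsetminus e$, which is exactly the condition that $\Gamma_0\smallsetminus\{e,e'\}$ be disconnected. I expect the main obstacle to be the bridge-tree analysis in (\ref{C12}): proving that $\Gamma(S_e)$ is a single cycle rather than a tree-plus-edge with spurious branches, which is precisely where $2$-edge-connectedness of $\Gamma$ is used, together with the uniqueness step, where the clause ``$\Gamma\smallsetminus S$ has no separating edge'' must be invoked exactly to exclude $S\subsetneq S_e$.
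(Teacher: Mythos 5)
Your proof is correct and shares the paper's skeleton --- reduce to a connected, $2$-edge-connected graph, identify the candidate $S_e=E(\Gamma\smallsetminus e)_{\rm sep}\cup\{e\}$, prove it is a C1-set, prove it is the unique one containing $e$, then read off (\ref{C13}) and (\ref{C14}) --- but it substitutes genuinely different arguments at the two nontrivial steps. To show $\Gamma(S_e)$ is a cycle, the paper just counts: removing $e$ from $\Gamma(S_e)$ leaves a graph all of whose edges are separating, so its $b_1$ is $0$ and hence $b_1(\Gamma(S_e))=1$. You instead analyze the bridge-tree of $\Gamma\smallsetminus e$ and show it must coincide with the path joining the two pieces containing the endpoints of $e$. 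Your route is longer but buys more: it yields directly that every cycle meeting $S_e$ contains all of $S_e$ and that every edge outside $S_e$ lies on a cycle avoiding $e$, which is precisely what (\ref{C13}) and (\ref{C14}) need, whereas the paper re-derives these facts separately. For the inclusion $S\subseteq S_e$ in the uniqueness step, the paper observes that an edge of $S\smallsetminus S_e$ would be a separating edge of $\Gamma(S)\smallsetminus e$ and hence of $\Gamma\smallsetminus e$; you instead push a cycle class through $H_1(\Gamma,\Z)\to H_1(\Gamma(S),\Z)\cong\Z$ and compare coefficients, a clean homological argument that also gives, as a by-product, the dichotomy ``every cycle of $\Gamma$ either contains all of $S$ or is disjoint from it'' used later in Remark~\ref{decdelta}. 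The reverse inclusion and the deduction of (\ref{C13}) and (\ref{C14}) coincide with the paper's. The one point worth tightening is in (\ref{C14}): reducing to ``the block $\Gamma_0$ containing $e$ and $e'$'' tacitly assumes $e$ and $e'$ lie in the same connected component of $\Gamma\smallsetminus\Esep$; when they do not, one should note that both sides of the equivalence fail (a cycle through $e'$ avoids $e$, so $e'$ remains non-separating in $\Gamma\smallsetminus e$). This is a one-line addendum, not a gap in the method.
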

\begin{proof}
The first assertion follows trivially from   Definition~\ref{C1}.

Notice that a C1-set of $\Gamma$ is entirely contained in the set of edges of a unique connected component of
$\widetilde{\Gamma}$.
Therefore we can assume that $\Gamma $ is connected, and, for parts (\ref{C12}) and (\ref{C14}),     free from separating edges.

Fix an edge $e\in E(\Gamma)$, let
 $\Gamma_e=\Gamma \smallsetminus e$ and
set
\begin{equation}
\label{Se}
S_e:=E(\Gamma_e)_{\rm sep}\cup\{e\}\subset E(\Gamma).
\end{equation}

We claim that $S_e$ is  the unique C1-set containing $e$.
 We have that  $\Gamma(S_e)$ is connected and free from separating edges  (as $\Gamma$ is). Therefore
to prove that $S_e$ is a C1-set  it suffices to prove that $b_1(\Gamma(S_e))=1$ .
 Let $\Gamma'$   be the graph obtained   from $\Gamma(S_e)$ by
removing $e$;  then $b_1(\Gamma ')=0$ (by construction all its edges are separating).
Now,
$
\#E(\Gamma(S_e))=\#E(\Gamma ')+1
$,
and, of course, $\Gamma(S_e)$ and $\Gamma '$ have the same  vertices.
Therefore
$
b_1(\Gamma(S_e)) = b_1(\Gamma ')+1=1
$. So $S$ is a C1-set.

Finally, let $\widetilde{S}$ be a C1-set containing $e$.
It is clear that $S_e\subset \widetilde{S}$  (any  $e'\in S_e$ such that $e'\not\in  \widetilde{S}$ would be a separating edge
of $\Gamma \smallsetminus \widetilde{S}$).
To prove that $S_e= \widetilde{S}$ consider the map
$\Gamma \to \Gamma(\widetilde{S})$ contracting all the edges not in $\widetilde{S}$.
Suppose, by contradiction, that there is an edge $\widetilde{e}\in \widetilde{S}\smallsetminus S_e$;
since $\Gamma(\widetilde{S})$ is a cycle, $\widetilde{e}$ is a separating edge of
$\Gamma(\widetilde{S})\smallsetminus e$. Therefore  $\widetilde{e}$ is a separating edge of
$\Gamma \smallsetminus e=\Gamma_e$, and hence $\widetilde{e}$ must lie in $S_e$,
by    \ref{Se}. This is a contradiction,   (\ref{C12}) is proved.

Now part (\ref{C13}). We can assume that $e$ and $e'$ are non-separating, otherwise it is obvious.
Suppose $S_{e}=S_{e'}$; then, by definition,  we can assume that $\Esep=\emptyset$.
 Let $\Delta\subset \Gamma$ be a cycle containing $e'$. By part (\ref{C12}) we have that $e'$ is a separating edge of $\Gamma \smallsetminus e$; therefore if $\Delta$ does not contain $e$,
then $e'$ is a separating edge of $\Delta$, which is impossible.
Conversely, if $e'\not\in S_{e}$ then (as $e'$ is non-separating for
$\Gamma \smallsetminus S_{e}$) there exists a cycle $\Delta \subset \Gamma \smallsetminus S_{e}$ containing $e'$. So $e$ and $e'$ do not lie in the same cycles.

Finally part (\ref{C14}).
If $(e,e')$ is a separating pair then
  $e$ is a separating edge of $\Gamma \smallsetminus  e'$ and $e'$ is a separating edge of $\Gamma \smallsetminus  e$. By part (\ref{C12}) $e$ and $e'$ belong to the same C1-set.
  The converse follows from the fact that a   cycle with two edges removed is disconnected.
\end{proof}
\begin{remark}
\label{decdelta}
Let $\Delta\subset \Gamma$ be a cycle. By Lemma~\ref{C1lm} the set $E(\Delta)$
is a  disjoint  union of C1-sets. We define
$ \Set_{\Delta}\Gamma :=\{S\in \Set \Gamma: \  S\subset E(\Delta)\}
$
so that
$$
E(\Delta)=\coprod_{S\in \Set_{\Delta}\Gamma}S.
$$
\end{remark}

\begin{cor}
\label{cor3}
 A graph $\Gamma$ is 3-edge connected  if and only if it is connected and there is a bijection
 $E(\Gamma)   \to  \Set  \Gamma$ mapping $e\in E(\Gamma)$ to $\{ e\}\in\Set  \Gamma$.
\end{cor}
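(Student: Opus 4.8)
The plan is to characterize 3-edge connectivity directly through the structure of C1-sets, using the properties already established in Lemma~\ref{C1lm}. The statement is an equivalence, so I would prove each implication separately, and in both directions the key is to relate the failure of 3-edge connectivity to the existence of a C1-set containing more than one edge.

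First I would handle the forward implication. Suppose $\Gamma$ is 3-edge connected. Then in particular it is connected (since 3-edge connected implies 2-edge connected implies connected), and $E(\Gamma)_{\rm sep}=\emptyset$. By Lemma~\ref{C1lm}(\ref{C12}), every edge $e$ is non-separating and hence lies in a unique C1-set $S_e$, so there is a well-defined map $E(\Gamma)\to\Set\Gamma$ sending $e$ to $S_e$; I must show $S_e=\{e\}$ for every $e$, which simultaneously shows the map is the claimed bijection. The cleanest route is the contrapositive via part (\ref{C14}): if some C1-set contained two distinct edges $e,e'$, then $\Gamma\smallsetminus\{e,e'\}$ would be disconnected, contradicting 3-edge connectivity. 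I would also note surjectivity: every C1-set $S$ satisfies $S=S_e$ for any $e\in S$, so $S=\{e\}$ is in the image.

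For the converse, assume $\Gamma$ is connected and $e\mapsto\{e\}$ is a bijection onto $\Set\Gamma$. I want 3-edge connectivity. First, the existence of the bijection forces $\Esep=\emptyset$: a separating edge lies in no C1-set by Lemma~\ref{C1lm}(\ref{C11}), whereas the hypothesis says every edge maps to a C1-set $\{e\}$ containing it, so $\Gamma$ has no separating edges and is therefore 2-edge connected. To upgrade to 3-edge connected, I must rule out the removal of any two edges disconnecting $\Gamma$; equivalently, rule out separating pairs. By Lemma~\ref{C1lm}(\ref{C14}), a separating pair $(e,e')$ of non-separating edges would force $e$ and $e'$ into the same C1-set, contradicting that $\{e\}$ and $\{e'\}$ are distinct singletons. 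Since there are no separating edges either, removing any two edges leaves $\Gamma$ connected, giving 3-edge connectivity.

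The main obstacle I anticipate is the careful bookkeeping at the boundary between the ``$\Esep=\emptyset$'' reductions and the definition of 3-edge connectivity, particularly making sure the hypotheses in Lemma~\ref{C1lm}(\ref{C14})—namely that $\Gamma$ is connected and both edges are non-separating—are genuinely in force before invoking it. One subtle point is verifying that the hypothesized bijection being well-defined already encodes that no edge is separating (so that (\ref{C14}) applies to \emph{every} pair), rather than assuming this separately. Apart from that, the argument is a direct translation of parts (\ref{C11}), (\ref{C12}) and (\ref{C14}) of the Lemma into the language of connectivity, so I expect no serious computation.
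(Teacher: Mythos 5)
Your proof is correct and follows essentially the same route as the paper's: both directions reduce to Lemma~\ref{C1lm}, using part~(\ref{C12}) for the existence and uniqueness of the C1-set containing each non-separating edge and part~(\ref{C14}) to equate the presence of a multi-element C1-set with the existence of a separating pair of edges. Your additional remarks on well-definedness, surjectivity, and the forcing of $\Esep=\emptyset$ only make explicit what the paper leaves implicit.
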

\begin{proof}
If $\Gamma$ is  3-edge connected it is free from separating edges; hence every $e\in E(\Gamma)$ belongs to a unique $S\in \Set\Gamma$.
So it suffices to prove that every $S\in \Set \Gamma$ has cardinality 1.
Suppose there are two distinct edges $e,e'\in S$.   Then Lemma~\ref{C1lm}(\ref{C14})
  yields that $\Gamma  \smallsetminus \{e,e'\}$ is not connected, which is a contradiction.

Conversely, if every edge lies in a C1-set, then $\Gamma$ has no separating edges.
If $\Gamma$ is not 3-edge connected, it admits a separating pair of edges $(e,e')$. Then $e$ and $e' $ belong to the same $S\in \Set \Gamma$
(by Lemma~\ref{C1lm}). So we are done.
\end{proof}
In the next statement we use the notation of  \ref{C1lm}(\ref{C12}) and  \ref{cyc-equiv}.
\begin{cor}
\label{CC1}
Let $\Gamma $ and $\Gamma '$ be cyclically equivalent; then $\#\Esep = \#E(\Gamma')_{\rm{sep}}$.
Let
 $\e:E(\Gamma)\to E(\Gamma')$ be a cyclic bijection; then
  $\e$ induces a bijection
  $$
 \begin{aligned}
\beta_{\e}: &\  \Set \Gamma  & \la &\Set  \Gamma ' \\
&\  S_e &\longmapsto & \  S_{\e (e)}\
 \end{aligned}
 $$
 such that $\#S =\#\beta_{\e}(S)$ for every $S\in \Set \Gamma$.
\end{cor}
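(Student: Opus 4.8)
The plan is to reduce the entire statement to the cycle structure of the two graphs, since this is precisely the data a cyclic bijection preserves. The crucial observation is that both separating edges and C1-sets admit purely cyclic descriptions. An edge $e$ is separating exactly when it lies on no cycle of $\Gamma$: for a connected graph this is the standard fact that a bridge is an edge contained in no cycle, and the general case follows by working in the connected component containing $e$. Moreover, Lemma~\ref{C1lm}(\ref{C13}) says that two non-separating edges lie in the same C1-set if and only if they belong to exactly the same cycles. By definition, a cyclic bijection $\e$ maps the edge set of each cycle of $\Gamma$ bijectively onto the edge set of a cycle of $\Gamma'$, so $e$ lies on a given cycle if and only if $\e(e)$ lies on the corresponding cycle. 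Hence both descriptions transport directly along $\e$.

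First I would establish $\#\Esep=\#E(\Gamma')_{\rm sep}$. By the description above, $e$ lies on no cycle of $\Gamma$ if and only if $\e(e)$ lies on no cycle of $\Gamma'$; thus $\e$ restricts to a bijection from $\Esep$ onto $E(\Gamma')_{\rm sep}$, and the cardinalities coincide.

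Next I would construct $\beta_\e$ and verify it is a well-defined, cardinality-preserving bijection. The key step is to show that $\e$ carries each C1-set onto a C1-set, i.e. $\e(S_e)=S_{\e(e)}$ for every non-separating $e$: if $e'\in S_e$, then Lemma~\ref{C1lm}(\ref{C13}) gives that $e$ and $e'$ share the same cycles, so $\e(e)$ and $\e(e')$ share the same cycles, whence $\e(e')\in S_{\e(e)}$; the reverse inclusion follows symmetrically, using that $\e$ is a bijection. Consequently $\beta_\e$ is simply the set-level map induced by $\e$ on C1-sets. Well-definedness---that $S_e=S_{e'}$ forces $S_{\e(e)}=S_{\e(e')}$---is then immediate from the same cyclic criterion, bijectivity follows by running the identical argument for the cyclic bijection $\e^{-1}$, and the equality $\#S=\#\beta_\e(S)$ is automatic since $\beta_\e(S)=\e(S)$ and $\e$ is injective.

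I do not expect a real obstacle here: once Lemma~\ref{C1lm} is available the argument is essentially formal, being a transport of structure along $\e$. The only points demanding genuine care are the equivalence ``separating $\Leftrightarrow$ lying on no cycle'' and the observation that the bijection $\e$ induces on cycles preserves which edges lie on which cycle; everything else is bookkeeping.
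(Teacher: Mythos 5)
Your proof is correct and follows essentially the same route as the paper: the paper likewise disposes of the first claim by noting that an edge is separating if and only if it lies on no cycle, and then derives the second claim directly from Lemma~\ref{C1lm}(\ref{C12}) and (\ref{C13}). Your write-up merely spells out the transport-of-structure details that the paper leaves implicit.
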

\begin{proof}
An edge is separating if and only if it is not contained
in any cycle. Therefore $\epsilon$ maps $\Esep $ bijectively
to $E(\Gamma')_{\rm{sep}}$, so the first part is proved.
The second part follows immediately from Lemma~\ref{C1lm} (\ref{C12})
and (\ref{C13}).
\end{proof}

We introduce two types of edge contractions  that will be used extensively later:


\begin{enumerate}[(A)]
\item
\label{A} Contraction of a separating edge:

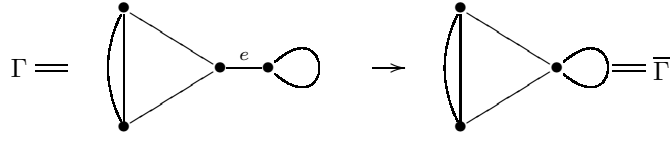
\begin{figure}[!htp]
$$\xymatrix@=1pc{
&&*{\bullet} \ar@{-}[dd]\ar@{-}@/_.5pc/[dd]\ar@{-}[drr]&&&&&&&
*{\bullet} \ar@{-}[dd]\ar@{-}@/_.5pc/[dd]\ar@{-}[drr]&&&&\\
\Gamma\ar@{=}[r]&&&&*{\bullet}\ar@{-}[r]^{e}&*{\bullet} \ar@{-}@(ur,dr)&&\ar@{->}[r]&&
&&*{\bullet}\ar@{-}@(ur,dr)& \ar@{=}[r]& \ov{\Gamma}\\
&&*{\bullet} \ar@{-}[urr]&&&&&&& *{\bullet} \ar@{-}[urr]&&&&
}$$
\caption{The contraction of the separating edge $e\in E(\Gamma)$.}\label{cont-sep}
\end{figure}

\item
\label{B} Contraction of one of two edges of a separating pair of edges:

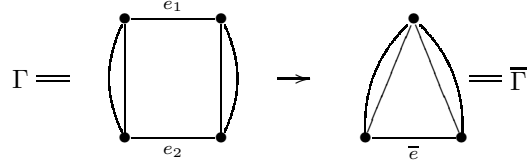
\begin{figure}[!htp]
$$\xymatrix@=1.pc{
&&*{\bullet} \ar@{-}[rr]^{e_1} \ar@{-}[dd] \ar@{-}@/_/[dd] & &*{\bullet}
\ar@{-}[dd] \ar@{-}@/^/[dd]
& &&& *{\bullet} \ar@{-}[ddl] \ar@{-}@/_/[ddl] \ar@{-}@/^/[ddr] \ar@{-}[ddr]&& \\
\Gamma\ar@{=}[r]&&&&&\ar@{->}[r]&& &&\ar@{=}[r]&\ov{\Gamma}\\
&&*{\bullet} \ar@{-}[rr]_{e_2} & &*{\bullet}& && *{\bullet} \ar@{-}[rr]_{\ov{e}}
& & *{\bullet}&\\
}
$$
\caption{The contraction of the edge $e_1$ of the separating pair
$(e_1, e_2)$.}\label{con-pair}
\end{figure}
\end{enumerate}

To a graph $\Gamma$ we shall associate two types of graphs.

\begin{defi}\label{3-conn}
The {\it $2$-edge connectivization} of a connected graph $\Gamma$
is the $2$-edge connected graph $\Gamma^2$ obtained from $\Gamma$ by iterating
the above operation (A) (for all the separating edges of $ \Gamma$).

A {\it $3$-edge connectivization} of a connected graph $\Gamma$ is a
  $3$-edge connected graph $\Gamma^3$ which
 is obtained from $\Gamma^2$ by iterating
the above operation (B).

If $\Gamma$ is not connected, we define $\Gamma^2$
(resp.   $\Gamma^3$)  as the disjoint union of the $2$-edge connectivizations
(resp. $3$-edge connectivizations)
of its connected components.
\end{defi}
\begin{remark}
It is clear that $\Gamma^2$ is uniquely determined, while $\Gamma^3$ is not.

If $\Gamma$ is not connected  $\Gamma^2$
(resp. $\Gamma^3$) is   not $2$-edge  (resp. $3$-edge) connected.

There is a (surjective) {\it contraction map} $\sigma:\Gamma \to \Gamma^2
\to \Gamma^3$
obtained by composing the contractions defining $\Gamma^2$ and $\Gamma^3$.
\end{remark}
\begin{lemma}
\label{3lm}
Let $\Gamma$ be a  graph.
\begin{enumerate}[(i)]
\item
\label{3lmb}
 $b_1(\Gamma^3)=b_1(\Gamma^2)=b_1(\Gamma)$.
\item
\label{3lmC1}
There are canonical bijections
$$
\Set \Gamma^3 \leftrightarrow E(\Gamma^3)\leftrightarrow \Set \Gamma.
$$
\item
\label{3lmcyc}
Two 3-edge connectivizations of $\Gamma$ are cyclically equivalent.
\item
$\Gamma^2\equiv_{\rm cyc}\Gamma\smallsetminus E(\Gamma)_{\rm sep}.$
\end{enumerate}
\end{lemma}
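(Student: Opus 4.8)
The plan is to prove the four parts in order, exploiting that each part either follows from the additive Betti formula, from the explicit description of C1-sets in Lemma~\ref{C1lm}, or from the combinatorial moves of Whitney's theorem (Theorem~\ref{cycequ-moves}). Throughout I would reduce to the connected case, since $\Gamma^2$ and $\Gamma^3$ are defined componentwise and all four assertions are invariant under disjoint union.

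For part (\ref{3lmb}), I would observe that both operations (A) and (B) contract a single edge. When we contract a separating edge $e$, the formula \eqref{b1} gives $b_1(\Gamma)=b_1(\Gamma\smallsetminus e)+b_1(\Gamma(\{e\}))$; since $e$ is separating, $\Gamma(\{e\})$ is a tree-like graph with $b_1=0$, and contracting $e$ changes neither the number of independent cycles. Concretely, contracting any edge that lies on no cycle preserves $b_1$, and in operation (B) we contract one edge $e_1$ of a separating pair $(e_1,e_2)$, after which $e_2$ becomes a loop or a genuine cycle-edge but the rank of $H_1$ is unchanged (the cycle through $e_1,e_2$ survives, carried now by $\ov e$). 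Since $\sigma\colon\Gamma\to\Gamma^2\to\Gamma^3$ is a composition of such contractions, $b_1$ is constant along the way, giving $b_1(\Gamma^3)=b_1(\Gamma^2)=b_1(\Gamma)$.

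For part (\ref{3lmC1}), the right-hand bijection $E(\Gamma^3)\leftrightarrow\Set\Gamma$ is where the real content lies, and this is the step I expect to be the main obstacle. The idea is that the contraction map $\sigma$ induces, by Corollary~\ref{CC1} and Lemma~\ref{C1lm}(\ref{C13}), a natural identification of the C1-sets of $\Gamma$ with the C1-sets of $\Gamma^3$: operations (A) and (B) only delete separating edges and collapse separating pairs, neither of which merges or splits the equivalence classes ``belong to the same cycles'' on the non-separating edges. Since $\Gamma^3$ is $3$-edge connected, Corollary~\ref{cor3} gives the left-hand bijection $\Set\Gamma^3\leftrightarrow E(\Gamma^3)$ sending $\{e\}\mapsto e$. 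Composing yields the chain of canonical bijections. The care needed is to check that every C1-set of $\Gamma$ survives to a single edge of $\Gamma^3$ and conversely that no edge of $\Gamma^3$ arises from a separating edge of $\Gamma$; this is precisely controlled by Lemma~\ref{C1lm}(\ref{C11}) and the fact that $\sigma$ contracts exactly the separating edges and all-but-one edge of each C1-set.

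For part (\ref{3lmcyc}), given two $3$-edge connectivizations $\Gamma^3$ and $(\Gamma^3)'$, I would use part (\ref{3lmC1}) to build an edge bijection $E(\Gamma^3)\to E((\Gamma^3)')$ through the common set $\Set\Gamma$, and then verify it is cyclic in the sense of Definition~\ref{cyc-equiv}, using Lemma~\ref{C1lm}(\ref{C13}) to match cycles. Alternatively, since the choices in operation (B) amount to selecting which edge of a separating pair to contract, the resulting graphs differ by twisting moves, hence are cyclically equivalent by Theorem~\ref{cycequ-moves}. Finally, for the last part, $\Gamma^2$ is obtained from $\Gamma$ by contracting exactly the separating edges, whereas $\Gamma\smallsetminus E(\Gamma)_{\rm sep}$ deletes them; both leave the non-separating edges and their cycle-structure untouched, so the identity on non-separating edges is a cyclic bijection, giving $\Gamma^2\equiv_{\rm cyc}\Gamma\smallsetminus E(\Gamma)_{\rm sep}$.
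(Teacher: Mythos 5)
Your proposal is correct and follows essentially the same route as the paper: part (ii) via Corollary~\ref{cor3} together with the contraction map $\sigma$ collapsing each C1-set to a single edge, part (iii) by transporting cycles through the two contraction maps, and part (i) from the observation that only non-loop edges ever get contracted. The only cosmetic difference is in the last part, where the paper invokes vertex-gluing moves from Theorem~\ref{cycequ-moves} while you exhibit the identity on non-separating edges directly as a cyclic bijection; both are valid one-line arguments.
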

\begin{proof}
The first Betti number is invariant under the operations (\ref{A}) and (\ref{B}) above, because  no loop  gets contracted.
So, part  (\ref{3lmb}) is done.
Notice also (which will be used later) that the contraction map
$\sigma:\Gamma \to \Gamma^3$ induces a natural bijection between the cycles of $\Gamma$ and those of $\Gamma^3$.

Now part  (\ref{3lmC1}). The bijection $\Set \Gamma^3 \leftrightarrow E(\Gamma^3)$ is described in \ref{cor3}.
Let $S\in \Set \Gamma$ and set
\begin{equation}
\label{Sset}
S=\{ e_{S,1},\ldots, e_{S,\# S} \}.
\end{equation}
Consider again the contraction map
$\sigma:\Gamma \to \Gamma^3$. Clearly $\sigma$ contracts all the edges of $S$ but one,
which gets mapped to an edge $e _S\in E(\Gamma^3)$. We have thus defined a map
\begin{equation}
\label{Smap}
\psi:\Set \Gamma \la  E(\Gamma^3);\  \  \  S\longmapsto e _S.
\end{equation}
By \ref{C1lm} and by the definition of $\Gamma^3$ the above map is a bijection. So (\ref{3lmC1}) is proved.

Let $\Gamma^3$ and $\widetilde{\Gamma^3}$ be two  3-edge connectivizations
of $\Gamma$. By part  (\ref{3lmC1}) there is a natural bijection
$E(\Gamma^3)\leftrightarrow E(\widetilde{\Gamma^3})$.
Moreover, by what we said before, the two contraction maps
$$
\sigma: \Gamma \la {\Gamma^3} \hskip.7in   \widetilde{\sigma}: \Gamma \la \widetilde{\Gamma^3}\
$$
induce natural bijections between cycles that are compatible with the bijection $E(\Gamma^3)\leftrightarrow E(\widetilde{\Gamma^3})$. Therefore $\Gamma^3 \equiv_{\rm cyc} \widetilde{\Gamma^3}$, and
the part (\ref{3lmcyc}) is proved.

For the last part, it suffices to observe that $\Gamma^2$ can be obtained by $\Gamma\smallsetminus E(\Gamma)_{\rm sep}$
by    moves of type (1) (vertex gluing) in Theorem \ref{cycequ-moves}.
\end{proof}

\begin{prop}\label{lift-cyc} Let $\Gamma$ and $\Gamma '$ be two graphs.
\begin{enumerate}[(i)]
 \item \label{lift1}
Assume   $ \Gamma^2 \equiv_{\rm cyc} \Gamma'^2$. Then  $ \Gamma  \equiv_{\rm cyc}  \Gamma' $ if and only if $ \# E(\Gamma)_{\rm sep}= \# E(\Gamma')_{\rm sep}$.

\item \label{lift2}
Assume $ \Gamma^3 \equiv_{\rm cyc} \Gamma'^3$
 and  $\Esep = E(\Gamma ')_{\rm{sep}}=\emptyset$.
Then  $ \Gamma  \equiv_{\rm cyc}  \Gamma' $ if and only if
the natural bijection
$$
\beta: \Set \Gamma \stackrel{\psi}{\la} E( \Gamma^3) \stackrel{\e^3}{\la}
E( \Gamma'^3)
 \stackrel{(\psi')^{-1}}{\la}  \Set \Gamma'
$$
satisfies $\#S=\#\beta (S)$,
where $\psi$ and $\psi' $ are the bijections defined in (\ref{Smap}), and $\e^3$ a cyclic bijection.
\end{enumerate}
\end{prop}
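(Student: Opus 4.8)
The plan is to treat the two parts separately, in each case getting one implication for free from Corollary~\ref{CC1} and building the required cyclic bijection by hand for the other.

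For part (\ref{lift1}), the implication $\Gamma \equiv_{\rm cyc} \Gamma' \Rightarrow \#\Esep = \#E(\Gamma')_{\rm sep}$ is exactly the first assertion of Corollary~\ref{CC1}, so nothing is needed there. For the converse I would use that, by construction of the $2$-edge connectivization, $E(\Gamma^2) = E(\Gamma)\smallsetminus \Esep$ and $E(\Gamma'^2) = E(\Gamma')\smallsetminus E(\Gamma')_{\rm sep}$, and that, by Lemma~\ref{3lm}(iv), the cycles of $\Gamma$ coincide (with identical edge sets inside $E(\Gamma)\smallsetminus\Esep$) with those of $\Gamma^2$; indeed a cycle never meets a separating edge, so contracting the separating edges of $\Gamma$ leaves every cycle untouched. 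Fixing a cyclic bijection $\e^2\colon E(\Gamma^2)\to E(\Gamma'^2)$ and an arbitrary bijection $\Esep \to E(\Gamma')_{\rm sep}$ (which exists by the cardinality hypothesis), I would define $\e\colon E(\Gamma)\to E(\Gamma')$ to equal $\e^2$ on non-separating edges and the chosen bijection on separating edges. Since every cycle of $\Gamma$ lies in $E(\Gamma^2)$ and $\e^2$ is cyclic, $\e$ sends the cycles of $\Gamma$ bijectively to those of $\Gamma'$; hence $\e$ is cyclic and $\Gamma\equiv_{\rm cyc}\Gamma'$.

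For part (\ref{lift2}) I would first record the structural facts I intend to use. Since $\Esep=\emptyset$, the C1-sets partition $E(\Gamma)$ by Lemma~\ref{C1lm}(\ref{C12}), and by Remark~\ref{decdelta} each cycle $\Delta$ of $\Gamma$ decomposes as $E(\Delta)=\coprod_{S\in\Set_{\Delta}\Gamma}S$; moreover the contraction map $\sigma\colon\Gamma\to\Gamma^3$ induces a bijection between the cycles of $\Gamma$ and those of $\Gamma^3$ (noted in the proof of Lemma~\ref{3lm}), under which $\Delta$ corresponds to the cycle of $\Gamma^3$ whose edge set is exactly $\{\psi(S):S\in\Set_{\Delta}\Gamma\}$. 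For the implication ``$\#S=\#\beta(S)\Rightarrow\Gamma\equiv_{\rm cyc}\Gamma'$'' I would choose, for every $S\in\Set\Gamma$, a bijection $S\to\beta(S)$ (possible precisely because the cardinalities agree) and let $\e\colon E(\Gamma)\to E(\Gamma')$ be their disjoint union over the partition into C1-sets. To see that $\e$ is cyclic, take a cycle $\Delta$ of $\Gamma$: its image edge set $\{\psi(S)\}_{S\in\Set_{\Delta}\Gamma}$ is a cycle of $\Gamma^3$, so $\e^3$ carries it to a cycle of $\Gamma'^3$ with edge set $\{\e^3(\psi(S))\}=\{\psi'(\beta(S))\}$, which in turn corresponds to the cycle of $\Gamma'$ with edge set $\coprod_S\beta(S)=\e(E(\Delta))$. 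Thus $\e(E(\Delta))$ is a cycle of $\Gamma'$, and running the same argument for $\e^{-1}$ shows $\e$ is a cyclic bijection.

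For the reverse implication in part (\ref{lift2}), I would start from a cyclic bijection $\e\colon E(\Gamma)\to E(\Gamma')$ and invoke Corollary~\ref{CC1} to obtain an induced bijection $\beta_{\e}\colon\Set\Gamma\to\Set\Gamma'$ with $\#S=\#\beta_{\e}(S)$. The remaining point is that $\beta_{\e}$ is of the required form, i.e. that it factors as $(\psi')^{-1}\circ\e^3\circ\psi$ for a cyclic bijection $\e^3\colon E(\Gamma^3)\to E(\Gamma'^3)$: setting $\e^3:=\psi'\circ\beta_{\e}\circ\psi^{-1}$, the cycle correspondence recorded above shows that $\e^3$ sends the edge set of each cycle of $\Gamma^3$ to the edge set of a cycle of $\Gamma'^3$, so $\e^3$ is cyclic and $\beta=\beta_{\e}$ for this choice. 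I expect the main obstacle to be exactly this last bookkeeping step — keeping straight the three-way correspondence among C1-sets of $\Gamma$, edges of $\Gamma^3$, and cycles, and checking that the cardinality-preserving map produced abstractly by Corollary~\ref{CC1} is genuinely the \emph{same} as the natural $\beta$ realized through the chosen $\e^3$ — rather than any single computation.
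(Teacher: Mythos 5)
Your proposal is correct and follows essentially the same route as the paper's proof: both directions of part (i) via Corollary~\ref{CC1} and the gluing of a cyclic bijection on $E(\Gamma^2)$ with an arbitrary bijection on separating edges, and part (ii) via the partition of $E(\Gamma)$ into C1-sets together with the cycle bijections induced by the contraction maps $\sigma:\Gamma\to\Gamma^3$. Your write-up merely makes explicit some bookkeeping (the three-way correspondence among C1-sets, edges of $\Gamma^3$, and cycles) that the paper dispatches with ``it is clear that'' and ``easily seen''.
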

\begin{proof}
The ``only if" part for both (\ref{lift1}) and (\ref{lift2}) holds in general, by Corollary~\ref{CC1}.
It suffices to add (for part (\ref{lift2})) that any cyclic bijection $\e:E(\Gamma)\to E(\Gamma ')$ induces a canonical cyclic bijection $\e^3:E( \Gamma^3) \to E( \Gamma'^3)$, and it is clear that
$(\psi')^{-1}\circ \e^3 \circ \psi =\beta_{\e}$ defined in \ref{CC1}.

Let us prove the sufficiency for part (\ref{lift1}). The point is that we can identify the edges
of $\Gamma^2$ with   the non-separating edges of $\Gamma$ so that we have
$E(\Gamma)=E( \Gamma^2)\coprod \Esep$; the same holds for $\Gamma '$ of  course. So,
pick a cyclic bijection $\e^2:E( \Gamma^2) \to E( \Gamma'^2)$ and
any bijection  $\e_{\rm{sep}}:E( \Gamma )_{\rm{sep}} \to E( \Gamma' )_{\rm{sep}}$.
Then we can glue $\e^2$ with $\e_{\rm{sep}}$ to a
bijection  $\e:E(\Gamma)\to E(\Gamma ')$ which is easily seen to be cyclic.

Now we prove the sufficiency in part (\ref{lift2}).
Recall that the contraction map $\sigma:\Gamma \to \Gamma ^3$ induces a natural bijection
between the cycles of $\Gamma$ and the cycles of $\Gamma ^3$; and the same holds for $\Gamma '$.
Therefore $\e^3$ induces a bijection, call it $\eta$, between the cycles of
$\Gamma$ and the cycles of $\Gamma '$.

On the other hand the bijection $\beta$ in the statement
induces a (non unique) bijection between $\e:E(\Gamma)\to E(\Gamma')$. Indeed, as
$\Gamma$ and $\Gamma' $ have no separating edges, every edge belongs to a unique C1-set (\ref{C1lm}). As $\beta$ preserves the cardinality of the C1-sets,
we easily obtain our $\e$. To show that $\e$ is cyclic, it suffices to observe that,
because of the naturality of the various maps,
$\e$ induces the above bijection $\eta$ between cycles of $\Gamma$ and $\Gamma '$.
\end{proof}
\begin{remark}
\label{3ec}
By the previous results, the class $[\Gamma^3]_{\rm cyc}$ depends solely on
$[\Gamma]_{\rm cyc}$.

Moreover, every representative in the class $ [\Gamma^3]_{\rm cyc}$
is such that each of its connected components is 3-edge-connected; therefore we shall refer to $[\Gamma^3]_{\rm cyc}$
as the {\it 3-edge connected class of $\Gamma$}.
\end{remark}

 \subsection{Totally cyclic orientations}
\begin{defi}
\label{tot}
Let $\Gamma$ be a graph and   $V(\Gamma)$ its set of vertices.

 If $\Gamma$ is connected, we
say that an orientation of $\Gamma$ is {\it totally cyclic} if   there exists
no proper non-empty subset $W\subset V(\Gamma)$ such that the edges between $W$ and
its complement  $V(\Gamma)\smallsetminus W$ go all in the same direction.
i.e. either all from $W$ to $V(\Gamma)\smallsetminus W$, or all in the opposite direction.

 If $\Gamma$ is not connected, we say that an orientation of $\Gamma$
is totally cyclic if   the orientation induced on each connected component
of $\Gamma$ is totally cyclic.
\end{defi}
Other names
 for
these orientations are ``strongly connected", and ``stable" (the latter is used in algebraic geometry).
\begin{remark}
\label{orex}
A cycle $\Delta$ admits exactly two totally cyclic orientations, which are usually called
just  cyclic, for obvious reasons.

On the other hand   if $\Esep\neq \emptyset$ then $\Gamma$ admits no totally cyclic orientations.
Indeed, suppose $\Gamma$ connected for simplicity and let $e\in \Esep$. Then the graph $\Gamma\smallsetminus e$ is the disjoint union of two graphs $\Gamma_1$ and $\Gamma_2$.
Then the set $W=V(\Gamma_1)\subset V(\Gamma)$ does not satisfy the requirement of Definition~\ref{tot}.
\end{remark}
The following lemma, the first part of which is already known, will be very useful.

\begin{lemma}\label{chso}
Let $\Gamma$ be a  graph.
\begin{enumerate}[(1)]
\item
 \label{c2}$\Gamma$ admits a totally cyclic orientation if and only if $E(\Gamma)_{\rm sep}=\emptyset$.
 \item
 \label{c1}
Assume $E(\Gamma)_{\rm sep}=\emptyset$  and fix
   an orientation  on $\Gamma$. The following conditions
are equivalent:
\begin{enumerate}[(a)]
\item
 \label{totcyc}
The orientation   is totally cyclic.
\item
 \label{vw}
 For any distinct $v, w\in V(\Gamma)$ belonging to the same connected component of
$\Gamma$, there exists a  path   oriented from $w$ to $v$.
\item
 \label{bto}
$H_1(\Gamma,\Z)$ has a basis of cyclically oriented cycles.
\item
\label{cc}
Every edge $e\in E(\Gamma)$ is contained in a cyclically oriented cycle.

\end{enumerate}
\end{enumerate}
\end{lemma}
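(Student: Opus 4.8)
The plan is to reduce everything to the case of a connected graph---each of the four conditions in part~(2) is local to the connected components, since (b) quantifies over vertices in a single component, (a) is defined component by component, (c) splits $H_1(\Gamma,\Z)$ as a direct sum over the components, and (d) is a condition on individual edges---and then to prove part~(1) together with a cyclic chain of implications for part~(2).

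For part~(1), the implication ``totally cyclic $\Rightarrow \Esep=\emptyset$'' is exactly the content of Remark~\ref{orex}. For the converse I would invoke the classical fact (Robbins' theorem) that a connected bridgeless graph admits a strongly connected orientation; concretely, one builds such an orientation from an ear decomposition of $\Gamma$, orienting the initial cycle cyclically and each successive ear coherently along its length. For part~(2) I would establish (a)$\Leftrightarrow$(b), then (b)$\Rightarrow$(d)$\Rightarrow$(a), then (c)$\Rightarrow$(d), and finally (b)$\Rightarrow$(c). The equivalence (a)$\Leftrightarrow$(b) is a reachability argument: given a totally cyclic orientation and $w\in V(\Gamma)$, the set $W$ of vertices reachable from $w$ by an oriented path has no edge leaving it (such an edge would enlarge $W$), so if $W\neq V(\Gamma)$ every edge across the cut $(W,V(\Gamma)\smallsetminus W)$ points the same way, contradicting Definition~\ref{tot}; conversely, a one-way cut obstructs any oriented path crossing it in the forbidden direction.

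For (b)$\Rightarrow$(d): given an edge $e$ (a loop is already a cyclically oriented cycle), take a shortest, hence simple, oriented path from $t(e)$ to $s(e)$; it cannot traverse $e$, whose only legal direction is $s(e)\to t(e)$, since it starts at $t(e)$, so adjoining $e$ produces a cyclically oriented cycle through $e$. For (d)$\Rightarrow$(a) I argue by contrapositive: a one-way cut $(W,V(\Gamma)\smallsetminus W)$ contains at least one edge, by connectedness, and such an edge lies in no cyclically oriented cycle, since a directed cycle through it would have to cross the cut back in the forbidden direction. The implication (c)$\Rightarrow$(d) is then immediate: if $\{\gamma_i\}$ is a basis of $H_1(\Gamma,\Z)$ made of cyclically oriented cycles, then any edge $e$---being non-separating, hence contained in some cycle---must appear with nonzero coefficient in some $\gamma_i$, i.e. $e$ is an edge of the cyclically oriented cycle $\gamma_i$.

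The real content, and the step I expect to be the main obstacle, is (b)$\Rightarrow$(c): producing an \emph{integral} basis of $H_1(\Gamma,\Z)$ consisting of cyclically oriented cycles. Here I would use the directed (``strong'') ear decomposition of a strongly connected digraph, $\Gamma=C_0\cup P_1\cup\cdots\cup P_{g-1}$ with $g=b_1(\Gamma)$, where $C_0$ is a directed cycle and each $P_i$ is a directed ear added to a subgraph that is already strongly connected. For each $i\geq 1$, strong connectivity of the current subgraph supplies an oriented return path closing $P_i$ into a cyclically oriented cycle $D_i$, and we set $D_0=C_0$. The issue is not linear independence over $\Q$---that follows from a triangular structure, since each ear $P_i$ contributes a genuinely new edge $f_i$ occurring in $D_i$ but in none of $D_0,\ldots,D_{i-1}$---but rather unimodularity, i.e. that the $D_i$ form a $\Z$-basis. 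To secure this I would choose the $f_i$ so that $\{f_0,\ldots,f_{g-1}\}$ is exactly the cotree complement of a spanning tree $T$ adapted to the ear decomposition (one non-tree edge per ear); then the coefficient-of-$f_j$ map identifies $H_1(\Gamma,\Z)$ with $\Z^g$, and in these coordinates the matrix with rows $D_i$ is lower triangular with diagonal entries $\pm 1$ (each $D_i$ is simple, so $f_i$ occurs in it exactly once), whence the determinant is $\pm 1$ and the $D_i$ form an integral basis. The care needed to pass from a $\Q$-basis to a $\Z$-basis is the crux; everything else reduces to short graph-theoretic arguments.
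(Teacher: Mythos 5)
Your proof is correct, but for the one substantive step, (b)$\Rightarrow$(c), you take a genuinely different route from the paper. The paper argues directly on an arbitrary cycle class $d\in H_1(\Gamma,\Z)$: it decomposes the underlying cycle $\Delta$ into its maximal coherently oriented subpaths $p_1,\dots,p_c$ (an even number of them), uses (b) to choose connecting oriented paths $q_i$ from the distinguished vertex $v_1$ to the break points, and telescopes to write $d=\sum(-1)^{i-1}d_i$ with each $d_i$ the class of a cyclically oriented cycle. This shows that the cyclically oriented cycles $\Z$-span $H_1(\Gamma,\Z)$ --- which is what is actually used later (e.g.\ in Corollary~\ref{cH}) --- but, strictly speaking, a $\Z$-spanning set of a free abelian group need not contain a basis, so the literal statement (c) is not a formal consequence of the paper's argument. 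Your directed-ear-decomposition argument, with one cotree edge $f_i$ per ear and the triangular-with-$\pm1$-diagonal matrix in the coordinates $c\mapsto(f_j^*(c))_j$, produces an honest $\Z$-basis of cyclically oriented cycles, so on this point your proof is cleaner and proves the stronger (literal) claim; the cost is importing the strong ear decomposition of strongly connected digraphs, whereas the paper's telescoping argument is self-contained. Your remaining steps match the paper's in substance: the reachability argument for (a)$\Rightarrow$(b) is identical, your (d)$\Rightarrow$(a) is the paper's cut argument, and your (c)$\Rightarrow$(d) is the paper's observation that an edge missed by every basis (or spanning) element has $e^*|_{H_1}=0$ and hence is separating; you merely route the implications differently ((b)$\Rightarrow$(d)$\Rightarrow$(a) and (c)$\Rightarrow$(d) instead of the paper's single cycle (a)$\Rightarrow$(b)$\Rightarrow$(c)$\Rightarrow$(d)$\Rightarrow$(a)), and part (1) is in both cases delegated to Robbins' theorem.
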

\begin{proof}
Part (\ref{c2}). We already observed, in \ref{orex}, that if $\Gamma$ has a separating edge    it
does not admit
a totally cyclic orientation.
The converse, which is the nontrivial part, was proved in
\cite{robbins}, or later in
 \cite[Lemma 1.3.5]{ctheta}.

We now prove the equivalence of the four conditions in Part
(\ref{c1}).

(\ref{totcyc})$\Rightarrow $ (\ref{vw})
Pick $w\in V(\Gamma)$. Let $W\subset V(\Gamma)$ be the set of all  vertices $v$ such that $\Gamma$ contains a path oriented from $w$ to $v$.
We want to prove that $W=V(\Gamma)$. By contadiction, suppose that $V(\Gamma)\smallsetminus W$ is not empty.
Then every edge $e$ joining a vertex $w'$ in $W$ with a vertex $v$ in $V(\Gamma)\smallsetminus W$
must be oriented from $v$ to $w'$ (otherwise the path obtained attaching $e$ to an oriented  path from $w$ to $w'$ would be oriented from $w$ to $v$, so that $v\in W$, which is not the case). But then 
every edge between $V(\Gamma)\smallsetminus W$ and $W$ goes from the former to the latter, hence
the orientation is not totally cyclic, a contradiction.

(\ref{vw})$\Rightarrow $ (\ref{bto}).
Let $d\in H_1(\Gamma, \Z)$ be an element  corresponding to a cycle $\Delta$.
We claim that $d$ can be expressed as $d=\sum n_id_i$ with each $d_i$ corresponding to a cyclically oriented cycle 
$\Delta_i\subset \Gamma$, and $n_i \in \Z$.
Suppose that $\Delta $ is not cyclically oriented (otherwise there is nothing to prove). 
Clearly every edge of $\Delta$ is contained in a unique maximal oriented (connected) path
contained in $\Delta$. This enables us to
 express
$\Delta$ as a union of maximal oriented paths, call them $p_1,\ldots, p_c$,  such that
every $p_i$ is adjacent to $p_{i-1}$ and $p_{i+1}$ (with the cyclic convention $p_0=p_c$; note that $c\geq 2$).
More precisely, call $v_1,\ldots , v_c$ the vertices of this decomposition,
so that $v_i, v_{i+1}$ are the end points of $p_i$ for every $i<c$ and $v_c, v_1$ are the end points of $p_c$.
Call $s_i$, respectively  $t_i$, the starting, respectively the ending, vertex of each path.
With no loss of generality, we may assume that $s_1=v_1$ (i.e.    $p_1$ starts from $v_1$)
and that $d = \sum _{i=1}^c(-1)^{i-1}p_i$ (abusing notation slightly).
By the  maximality assumption, we  obtain  that every odd vertex is the source of both  its adjacent paths, and every even vertex is the target of its adjacent paths, i.e.:
$$
v_{2i+1}=s_{2i}=s_{2i+1},\  \  \  v_{2i}=t_{2i}=t_{2i-1}.
$$
Notice that the number of paths, $c$, is necessarily even.

Now, by (\ref{vw}) we can pick a set of paths, $q_1,\ldots, q_{c-1}$, in   $\Gamma$ such that $q_i$ joins $v_1$ and  $v_{i+1}$
and is oriented as follows. 
For every odd $i$ the  path $q_i$ starts from $v_{i+1}$ and ends in $v_1$.
 For every even $i$ the  path $q_i$ starts from $v_1$ and ends in  $v_{i+1}$.

With this choice, we have the following cyclically oriented cycles $\Delta_1,\ldots \Delta_c$.
The cycle $\Delta_1$ 
is obtained by composing the paths $p_1$ and $q_1$; for all $1<i<c$ the cycle $\Delta_i$
is obtained by composing the paths $p_i$, $q_i$ and $q_{i-1}$; finally $\Delta_c$ is the composition of $p_c$ with $q_{c-1}$.
We have
$$
d=\sum _{i=1}^c(-1)^{i-1}p_i= p_1+q_1-\sum_{i=2}^{c-1}(-1)^i(p_i+q_i+q_{i-1})-p_c-q_{c-1}=
\sum_{i=1}^c(-1)^{i-1}d_i
$$
where $d_i\in H_1(\Gamma, \Z)$ corresponds to $\Delta_i$.
This proves that the $\Z$-span of the set of cyclically oriented cycles is the entire $H_1(\Gamma, \Z)$.

(\ref{bto})$\Rightarrow $ (\ref{cc}).
Pick a basis   of cyclically oriented cycles for $H_1(\Gamma, \Z)$.
By contradiction, let $e\in E(\Gamma)$ be such that there exists no cyclically oriented cycle containing it.
Then  there exists no basis element containing $e$, and hence $e$ is not contained in any cycle, which is obviously impossible
(as $E(\Gamma)_{\rm sep}=\emptyset$).

(\ref{cc})$\Rightarrow $ (\ref{totcyc}).
By contradiction, assume there exists a set of vertices $W$ such that $\emptyset\subsetneq W \subsetneq V(\Gamma)$
and such that every edge between $W$ and $V(\Gamma)\smallsetminus W$ goes from $W$ to $V(\Gamma)\smallsetminus W$.
Let $e$ be any such edge; every cycle $\Delta$
containing $e$ must contain another edge $e'$ between  $W$ and $V(\Gamma)\smallsetminus W$, and therefore (as $e'$ is also oriented from
$W$ to $V(\Gamma)\smallsetminus W$) $\Delta$ is not cyclically oriented. We conclude that no cycle containing $e$ is cyclically oriented, and this contradicts part (\ref{cc}).
\end{proof}

We shall use the following notation.
 For any edge $e\in E(\Gamma)$, we denote by $e^*\in C_1(\Gamma, \R)^*$ the functional
on $C_1(\Gamma, \R)$ defined, for $e'\in E(\Gamma)$,
\begin{equation}
\label{e*}
e^*(e')=\begin{cases}
1 & \text{ if } e'=e, \\
0 & \text{ otherwise.}
\end{cases}
\end{equation}
We shall constantly abuse notation  by  calling
$e^*\in H_1(\Gamma, \R)^*$ also  the restriction of $e^*$ to $ H_1(\Gamma, \R)$.
\begin{remark}
\label{e0sep}
$e\in \Esep$ if and only if the restriction of $e^*$ to $H_1(\Gamma, \R)$ is zero.

Indeed  $e\in \Esep$ if and only if $e$ is not contained in any cycle of $\Gamma$.
 \end{remark}

Recall that for any $S\in \Set \Gamma$ we denote
$S=\{ e_{S,1},\ldots, e_{S,\# S} \}.$

\begin{cor}
\label{cH}
Let $\Gamma$ be a graph and fix an orientation   inducing a totally cyclic orientation on
$\Gamma\smallsetminus E(\Gamma)_{\rm sep}$. Then the following facts hold.
\begin{enumerate}
\item
\label{cH1}For every $c\in H_1(\Gamma, \Z)$
we have
$$
c=\sum_{S\in \Set \Gamma} r_S(c)\sum _{i=1}^{\#S} e_{S,i},\  \  \  \  r_S(c)\in \Z.
$$
\item
\label{cH2}
Let   $e_1, e_2\in E(\Gamma)\smallsetminus \Esep$.
There exists $u\in \R$ such that $e_1^*=u e_2^*$ on $H_1(\Gamma, \R)$ if and only if $e_1$ and $e_2$ belong to the same
C1-set of $\Gamma$; 
moreover, in this case 
 $u=1$.
\end{enumerate}
 \end{cor}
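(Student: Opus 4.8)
The plan is to reduce everything to the subgraph $\Gamma\smallsetminus\Esep$, where the orientation is genuinely totally cyclic, and then to exploit the basis of cyclically oriented cycles furnished by Lemma~\ref{chso}. First I would observe that, by Remark~\ref{e0sep}, every separating edge $e$ satisfies $e^*=0$ on $H_1(\Gamma,\R)$ and lies on no cycle; consequently $H_1(\Gamma,\Z)=H_1(\Gamma\smallsetminus\Esep,\Z)$, and I may as well work on $\Gamma\smallsetminus\Esep$, which has no separating edges and carries a totally cyclic orientation. By Lemma~\ref{chso}, part (\ref{bto}), I then fix a basis $\Delta_1,\dots,\Delta_b$ of $H_1(\Gamma,\Z)$ consisting of cyclically oriented cycles.

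The technical heart, on which both parts rest, is the following claim: \emph{if $e_1,e_2$ lie in the same C1-set, then $e_1^*=e_2^*$ on $H_1(\Gamma,\R)$.} To prove it, it suffices to check equality on each basis cycle $\Delta_k$, i.e.\ that the coefficient of $e_1$ in $\Delta_k$ equals that of $e_2$. By Lemma~\ref{C1lm}(\ref{C13}), two edges in the same C1-set belong to exactly the same cycles, so $e_1\in E(\Delta_k)$ if and only if $e_2\in E(\Delta_k)$; if neither lies in $\Delta_k$, both coefficients vanish. The key point is that $\Delta_k$ is cyclically oriented: traversing it in the consistent direction, every edge is crossed in agreement with the fixed orientation, so the homology class $\Delta_k$ equals $\sum_{e\in E(\Delta_k)}e$ (all coefficients $+1$), up to a global sign. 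Hence any two edges of $\Delta_k$ have equal coefficient, and in particular $e_1^*(\Delta_k)=e_2^*(\Delta_k)$. This establishes the claim, and I expect this uniform-coefficient observation to be the only genuinely nonformal step.

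With the claim in hand, part (\ref{cH1}) is immediate. Writing $c=\sum_e c_e\,e$ with $c_e=e^*(c)\in\Z$, the claim shows that $c_e$ depends only on the C1-set $S$ containing $e$; I set $r_S(c):=c_e$ for any $e\in S$. Since $c_e=0$ whenever $e\in\Esep$ (Remark~\ref{e0sep}) and every non-separating edge lies in a unique C1-set (Lemma~\ref{C1lm}(\ref{C12})), grouping the sum by C1-sets yields $c=\sum_{S\in\Set\Gamma}r_S(c)\sum_{i=1}^{\#S}e_{S,i}$ with $r_S(c)\in\Z$, as required.

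Finally, for part (\ref{cH2}): the ``if'' direction together with the value $u=1$ follows at once from the claim, since $e_2^*\neq 0$ on $H_1(\Gamma,\R)$ for $e_2\notin\Esep$ (Remark~\ref{e0sep}) forces the scalar to be unique and hence equal to $1$. For the ``only if'' direction I argue by contraposition: if $e_1$ and $e_2$ lie in different C1-sets then, by Lemma~\ref{C1lm}(\ref{C13}), some cycle $\Delta$ contains one of them, say $e_1$, but not the other; then $e_1^*(\Delta)=\pm1\neq 0=e_2^*(\Delta)$, so no scalar $u$ can satisfy $e_1^*=u\,e_2^*$. This completes the argument.
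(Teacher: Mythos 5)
Your proof is correct and follows essentially the same route as the paper's: both arguments rest on Lemma~\ref{chso}(\ref{bto}) (a basis of cyclically oriented cycles, each contributing $\sum_{e\in E(\Delta)}e$ with all coefficients $+1$) combined with Lemma~\ref{C1lm}(\ref{C13}) (a C1-set meeting a cycle is contained in it), and the converse in part (\ref{cH2}) is handled identically via a cycle containing one edge but not the other. The only cosmetic difference is that you isolate the equality $e_1^*=e_2^*$ as a preliminary claim and deduce part (\ref{cH1}) from it, whereas the paper proves part (\ref{cH1}) first and reads the equality off from it; the content is the same.
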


\begin{proof}
Let $\Delta\subset \Gamma$ be a cyclically oriented cycle. Then
$\sum_{e\in E(\Delta)}e\in H_1(\Gamma, \Z)$.
By  Lemma~\ref{C1lm}
(\ref{C13}),  if a C1-set intersects the set of edges of a cycle, then it is entirely contained in it.
So part (\ref{cH1}) follows from
 Lemma~\ref{chso} (\ref{bto}).

 For the second part,   if  $e_1$ and $e_2$ belong to the same
C1-set then  $e_1^*=e_2^*$  by the first part.
Conversely, suppose $e_1$ and $e_2$ belong to different C1-sets, $S_1$ and $S_2$. Then by Lemma~\ref{C1lm} (\ref{C13}) there exists a cycle containing $e_1$ and not $e_2$.
Hence there exists $c\in H_1(\Gamma, \Z)$ such that
$r_{S_1}(c)\neq 0$ and $r_{S_2}(c) =0$. But then $e_1^*(c)=r_{S_1}(c)\neq 0$ and
$e_2^*(c)=r_{S_2}(c)= 0$ therefore $e_1^*\neq u e_2^*$ for any $u\in \R$.
 \end{proof}

\section{Torelli theorem for graphs }
\subsection{Statement of the   theorem}
The aim of this section is to prove the following Torelli theorem for graphs.

\begin{thm}\label{main-thm}
Let $\Gamma$ and $\Gamma'$ be two
graphs.
Then  $\Alb(\Gamma)\cong \Alb(\Gamma')$
if and only if $ \Gamma^2\equiv_{\rm cyc} \Gamma'^2$.
\end{thm}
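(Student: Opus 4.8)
The plan is to reduce both implications to separating-edge-free graphs and then to play the scalar product on $H_1$ off against the edge-functionals $e^*$ and the C1-set combinatorics developed above. \emph{Reduction.} I would first record that $\Alb(\Gamma)\cong\Alb(\Gamma^2)$ canonically: by Remark~\ref{e0sep} every $c\in H_1(\Gamma,\Z)$ has vanishing coefficient along each separating edge, so $H_1(\Gamma,\Z)$ lies in the span of the non-separating edges; identifying the latter with $E(\Gamma^2)$ through the contraction $\Gamma\to\Gamma^2$ produces a lattice isomorphism $H_1(\Gamma,\Z)\cong H_1(\Gamma^2,\Z)$ restricting the same standard scalar product on each side. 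Hence both hypothesis and conclusion are unchanged upon replacing $\Gamma,\Gamma'$ by $\Gamma^2,\Gamma'^2$, and we may assume $\Esep=E(\Gamma')_{\rm sep}=\emptyset$, so that the statement reads $\Alb(\Gamma)\cong\Alb(\Gamma')\iff\Gamma\equiv_{\rm cyc}\Gamma'$.

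\emph{The ``if'' direction.} By Whitney's Theorem~\ref{cycequ-moves} it suffices to check that a single vertex-gluing or twisting preserves the Albanese torus. A vertex gluing leaves $C_1$, its scalar product, and $H_1$ literally unchanged (no cycle traverses a cut vertex using edges of both sides), so it acts by the identity on edges. For a twisting at a separating pair $u,v$, writing $\Gamma=\Gamma_1\cup\Gamma_2$, I would use the sign-change automorphism of $C_1$ that is the identity on $E(\Gamma_1)$ and negation on $E(\Gamma_2)$: a short boundary computation shows it fixes every cycle lying in a single side and sends each crossing cycle of the untwisted graph to a boundary-zero chain of the twisted one, so it carries $H_1(\Gamma,\Z)$ isomorphically onto $H_1(\Gamma',\Z)$; as it permutes the orthonormal edge basis up to sign, it is an isometry. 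Composing such moves yields $\Alb(\Gamma)\cong\Alb(\Gamma')$.

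\emph{The ``only if'' direction}, which is the hard part. Fix an isometry $\Phi\colon H_1(\Gamma,\Z)\to H_1(\Gamma',\Z)$ preserving the scalar product. Identifying $H_1$ with its dual via $(,)$, the functional $e^*|_{H_1}$ is represented by the orthogonal projection $\bar e\in H_1(\Gamma,\R)$ of the edge $e$, with $(\bar e,c)=e^*(c)$ for all $c\in H_1$; by Corollary~\ref{cH}(\ref{cH2}) one has $\bar e_1=\bar e_2$ exactly when $e_1,e_2$ share a C1-set. Thus the configuration $\{\bar e\}$, read modulo equality, records the C1-sets of $\Gamma$ together with their cardinalities, while the pairings $(\bar e,c)$ against the lattice vectors $c$ representing cycles record which edges lie on which cycle, hence the entire cyclic structure. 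The crux---and the step I expect to be the main obstacle---is that this configuration $\{\pm\bar e\}$, with its C1-multiplicities and cyclic incidences, is intrinsic to the bare Euclidean lattice $(H_1(\Gamma,\Z),(,))$, so that $\Phi$ is forced to carry it to the analogous configuration of $\Gamma'$. This intrinsic recovery of the edge configuration from the metric alone is exactly what the Delaunay decomposition of the lattice supplies: being metrically defined, it is transported by every lattice isometry, and disentangling the edge-vectors and their incidences from it is the technical heart of the argument. Granting this, $\Phi$ induces a cyclic bijection $\e^3\colon E(\Gamma^3)\to E(\Gamma'^3)$ under which corresponding C1-sets have equal cardinality (using the bijections of Lemma~\ref{3lm}(\ref{3lmC1})), whereupon Proposition~\ref{lift-cyc}(\ref{lift2}) lifts it to $\Gamma\equiv_{\rm cyc}\Gamma'$, as desired.
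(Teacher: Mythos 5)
Your reduction to $2$-edge-connected graphs and your ``if'' direction are both sound; the latter, argued move-by-move through Whitney's theorem with an explicit sign-change isometry for a twisting, is a more hands-on (but correct) version of what the paper does by simply observing that the data $(H_1(\Gamma,\Z)\subset C_1(\Gamma,\Z);(,))$ is an invariant of $[\Gamma]_{\rm cyc}$ (Proposition~\ref{Alb-eq}, citing \cite{BdlHN}).

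The ``only if'' direction has a genuine gap, and it sits exactly where you flag ``the crux''. You assert that the configuration $\{\pm e^*\}$ \emph{together with its C1-multiplicities} is supplied by the Delaunay decomposition. It is not. The Delaunay decomposition only sees the generating hyperplanes $e^*=n$, i.e.\ the \emph{distinct} functionals $e^*|_{H_1(\Gamma,\R)}$ up to sign; since all edges of one C1-set give the same functional (Corollary~\ref{cH}), $\Del(\Gamma)$ is blind to the cardinalities $\#S$. Indeed Proposition~\ref{Del-equ} shows $\Del(\Gamma)\cong\Del(\Gamma^3)$, and that $\Del(\Gamma)\cong\Del(\Gamma')$ is equivalent only to $\Gamma^3\equiv_{\rm cyc}\Gamma'^3$ --- strictly weaker than what you need. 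So the Delaunay decomposition does deliver your cyclic bijection $\e^3$ and hence a bijection $\beta:\Set\Gamma\to\Set\Gamma'$, but the clause ``under which corresponding C1-sets have equal cardinality'' is precisely the remaining content of the theorem and cannot be ``granted''. The paper extracts the cardinalities from the scalar product itself (not from $\Del$) by a separate argument: given $S\in\Set\Gamma$, Lemma~\ref{intS} produces two cycles $\Delta_1,\Delta_2$ with $E(\Delta_1)\cap E(\Delta_2)=S$; for the corresponding lattice elements $c_1,c_2$ one has $\|c_i\|^2=\#S+\lambda(c_i-S)$ and, because $\Set_{c_1}\Gamma\cap\Set_{c_2}\Gamma=\{S\}$, also $\|c_1-c_2\|^2=\lambda(c_1-S)+\lambda(c_2-S)$; comparing these three norms with their counterparts for $\Gamma'$ forces $\#S=\#\beta(S)$. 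Some argument of this kind --- using the actual lengths of lattice vectors, not merely the Delaunay cells they generate --- is indispensable and is missing from your proposal.
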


We deduce that the Torelli theorem is true in stronger form for $3$-connected graphs.
More generally:

\begin{cor}\label{main-cor}
Let $\Gamma$ be   $3$-connected
   and let $\Gamma'$ have no vertex of valence 1.
Then $\Alb(\Gamma)\cong \Alb(\Gamma')$
if and only if $ \Gamma \cong \Gamma'$.
\end{cor}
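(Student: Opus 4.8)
The plan is to deduce Corollary~\ref{main-cor} directly from Theorem~\ref{main-thm}, using the structural results already established about $2$-edge connectivizations and cyclic equivalence. The key observation is that the hypotheses on $\Gamma$ and $\Gamma'$ are designed precisely to force the passage $\Gamma \rightsquigarrow \Gamma^2 \rightsquigarrow [\Gamma^2]_{\rm cyc}$ to collapse all ambiguity back to genuine isomorphism.

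First I would reduce to the $2$-edge connectivizations. By Theorem~\ref{main-thm}, $\Alb(\Gamma)\cong\Alb(\Gamma')$ holds if and only if $\Gamma^2\equiv_{\rm cyc}\Gamma'^2$, and of course $\Gamma\cong\Gamma'$ trivially implies $\Alb(\Gamma)\cong\Alb(\Gamma')$. So the entire content is the implication $\Gamma^2\equiv_{\rm cyc}\Gamma'^2 \Rightarrow \Gamma\cong\Gamma'$. Here the $3$-connectivity of $\Gamma$ is the crucial input. Since $\Gamma$ is $3$-connected, it is in particular $2$-connected, hence $2$-edge connected and free from separating edges; thus operation (A) does nothing and $\Gamma^2=\Gamma$. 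Moreover, by Remark~\ref{3v}, a $3$-connected graph is the unique member of its own cyclic equivalence class, because neither a vertex-gluing (move (1)) nor a twisting (move (2)) of Theorem~\ref{cycequ-moves} can be performed on a graph lacking a disconnecting vertex or a separating pair of vertices. Consequently $[\Gamma^2]_{\rm cyc}=[\Gamma]_{\rm cyc}=\{\Gamma\}$.

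Next I would handle $\Gamma'$. The hypothesis that $\Gamma'$ has no vertex of valence $1$ means $\Gamma'$ has no ``dangling'' edges, which is what guarantees that the separating-edge contractions defining $\Gamma'^2$ do not destroy or merge the essential combinatorial structure; more precisely, I would argue that under this assumption the cyclic equivalence $\Gamma^2\equiv_{\rm cyc}\Gamma'^2$, combined with $\Gamma^2=\Gamma$ being $3$-connected, forces $\Gamma'^2\cong\Gamma$ as well. Since $\Gamma'^2$ lies in the same cyclic class as the $3$-connected graph $\Gamma$, and that class is a singleton by Remark~\ref{3v}, we get $\Gamma'^2\cong\Gamma$. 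It then remains to promote $\Gamma'^2\cong\Gamma'$: having no valence-$1$ vertices, $\Gamma'$ can acquire separating edges only in configurations that would either create a valence-$1$ vertex upon analysis or contradict the isomorphism $\Gamma'^2\cong\Gamma$ with a $3$-connected (hence in particular loop-free at the relevant vertices) graph. I expect the cleanest route is to show directly that $\Gamma'$ has no separating edges, so that $\Gamma'^2=\Gamma'$.

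The main obstacle I anticipate is exactly this last step: ruling out separating edges in $\Gamma'$ using only the ``no valence-$1$ vertex'' hypothesis together with $\Gamma'^2\cong\Gamma$. One must be careful because contracting a separating edge (operation (A)) can in principle lower valences and identify vertices, so a priori $\Gamma'$ could carry separating edges whose contraction happens to yield something isomorphic to a $3$-connected graph. The key point to pin down is that a separating edge of $\Gamma'$ either ends in a valence-$1$ vertex (excluded by hypothesis) or creates, after contraction, a vertex whose local structure is incompatible with the $3$-connectivity of $\Gamma$; quantifying this via the bijections of Lemma~\ref{3lm} and the valence conditions is where the real work lies. Once $\Gamma'^2=\Gamma'$ is established, the chain $\Gamma'=\Gamma'^2\cong\Gamma=\Gamma^2$ completes the proof, and the converse direction is immediate.
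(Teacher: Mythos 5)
Your proposal is correct and follows the paper's proof essentially verbatim: reduce to $\Gamma=\Gamma^2$, invoke Remark~\ref{3v} to conclude that the cyclic class of the $3$-connected graph $\Gamma$ is a singleton, hence $\Gamma\cong\Gamma'^2$, and then rule out separating edges of $\Gamma'$. The final step, which you flag as ``where the real work lies,'' is in the paper a one-line observation and is exactly the mechanism you sketch: if $e$ were a separating edge of $\Gamma'$, then since $\Gamma'$ has no valence-$1$ vertices each side of $e$ carries further edges, so the image of $e$ under the contraction $\Gamma'\to\Gamma'^2$ is a separating vertex of $\Gamma'^2$, which is impossible because $\Gamma'^2\cong\Gamma$ is $3$-connected; no appeal to Lemma~\ref{3lm} is needed.
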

\begin{proof}
By hypothesis $\Gamma^2= \Gamma$.
Assume $\Alb(\Gamma)\cong \Alb(\Gamma')$;
then Theorem \ref{main-thm} yields $\Gamma\equiv_{\rm cyc}   \Gamma'^2$.
By Remark~\ref{3v} we obtain $\Gamma\cong\Gamma'^2$. If $\Gamma'^2\not\cong \Gamma'$,
then the contraction map $\Gamma '\to \Gamma'^2$ certainly produces some separating vertex,
 given by the image of a separating edge of $\Gamma'$
 (because $\Gamma '$ has non vertex of valence 1). But $\Gamma'^2$ has no such vertices, by the assumption on $\Gamma$.
Hence we necessarily have $\Gamma'\cong\Gamma'^2\cong\Gamma$.
\end{proof}

\begin{proof}[Proof of Theorem \ref{main-thm}: sufficiency.]
The ``if"  direction of Theorem~\ref{main-thm} is not difficult, and it follows from the subsequent statement, part (\ref{Alb1}) of which  is already known; see \cite[Prop. 5]{BdlHN}
(where a different language is used).
\begin{prop}\label{Alb-eq}
Let $\Gamma$ be a
graph.
\begin{enumerate}[(i)]
 \item \label{Alb1}
$\Alb(\Gamma)$ depends only on $[\Gamma]_{\rm cyc}$.
\item \label{Alb2}
$\Alb(\Gamma)=\Alb(\Gamma^2)$.
\end{enumerate}
\end{prop}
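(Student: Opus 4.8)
The plan is to prove both parts by showing that a cyclic bijection of edges induces an isometry of the relevant lattices, since the Albanese torus is entirely determined by the lattice $H_1(\Gamma,\Z)$ inside the Euclidean space $(H_1(\Gamma,\R),(,))$. The key observation is that the scalar product $(,)$ on $C_1(\Gamma,\R)$ is defined purely in terms of the edges (an orthonormal basis indexed by $E(\Gamma)$), and $H_1(\Gamma,\Z)$ is precisely the sublattice spanned by the cycles of $\Gamma$, viewed as $\pm 1$ combinations of edges via the orientation. So both the ambient inner-product space and the lattice are visibly ``combinatorial'' objects attached to the edge set and its collection of cycles, which is exactly the data preserved by a cyclic bijection.

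For part~(\ref{Alb1}), let $\e\colon E(\Gamma)\to E(\Gamma')$ be a cyclic bijection. First I would extend $\e$ to a linear isometry $\e_*\colon C_1(\Gamma,\R)\to C_1(\Gamma',\R)$ by sending the basis edge $e$ to $\pm\e(e)$; this is automatically an isometry for $(,)$ because it carries one orthonormal edge-basis to another (up to sign). The real content is to choose the signs so that $\e_*$ carries $H_1(\Gamma,\Z)=\ker\partial$ isomorphically onto $H_1(\Gamma',\Z)=\ker\partial'$. Since $\e$ induces a bijection on cycles and $H_1(\cdot,\Z)$ is generated by cycles, the strategy is to fix the signs cycle by cycle so that each oriented cycle $\Delta$ of $\Gamma$ maps to the correspondingly oriented cycle $\e(\Delta)$ of $\Gamma'$; one then checks these sign choices are globally consistent on edges shared by several cycles, using that the orientation of an edge relative to a cycle is cyclic data. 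Granting this, $\e_*$ restricts to an isometry $H_1(\Gamma,\R)\to H_1(\Gamma',\R)$ taking the lattice $H_1(\Gamma,\Z)$ onto $H_1(\Gamma',\Z)$, hence descends to the claimed isomorphism $\Alb(\Gamma)\cong\Alb(\Gamma')$.

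For part~(\ref{Alb2}), I would simply combine part~(\ref{Alb1}) with Lemma~\ref{3lm}(iv), which gives $\Gamma^2\equiv_{\rm cyc}\Gamma\smallsetminus E(\Gamma)_{\rm sep}$; since removing separating edges does not alter $H_1(\cdot,\Z)$ (a separating edge lies in no cycle, so by Remark~\ref{e0sep} it contributes nothing to homology and its $e$ is not in $\ker\partial$), one has $H_1(\Gamma,\Z)=H_1(\Gamma\smallsetminus E(\Gamma)_{\rm sep},\Z)$ as inner-product lattices. Thus $\Alb(\Gamma)=\Alb(\Gamma\smallsetminus E(\Gamma)_{\rm sep})=\Alb(\Gamma^2)$, the last equality by part~(\ref{Alb1}) applied to the cyclic equivalence in Lemma~\ref{3lm}(iv).

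The main obstacle is the sign-consistency verification in part~(\ref{Alb1}): a cyclic bijection is defined only at the level of the unoriented edge/cycle combinatorics, so one must genuinely produce a single coherent choice of orientations making $\e_*$ a chain-level isometry onto $\ker\partial'$, rather than just a cycle-by-cycle matching. I expect this to reduce to checking that $\e$ respects the relation ``two cycles induce opposite orientations on a common edge,'' which is combinatorially encoded in the cycle structure and therefore preserved by $\e$; making this precise (for instance by fixing a spanning tree and orienting the fundamental cycles) is the technical heart of the argument.
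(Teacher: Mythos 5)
Your proposal is correct and close in spirit to the paper's own proof, with two differences worth recording. For part~(\ref{Alb1}) the paper simply asserts that the pair consisting of the inclusion $H_1(\Gamma,\Z)\subset C_1(\Gamma,\Z)$ together with the orthonormal basis $E(\Gamma)$ is ``clearly invariant by cyclic equivalence''; you rightly isolate the sign-consistency problem --- a cyclic bijection is unoriented data, so one must choose signs $e\mapsto\pm\e(e)$ coherently so that $\ker\partial$ is carried onto $\ker\partial'$ --- as the real content that this one-liner hides. Your sketch (orient cycle by cycle, then verify compatibility on shared edges, e.g.\ via a spanning tree and its fundamental cycles) can be completed, but the cleanest way to discharge it with the paper's own toolkit is Theorem~\ref{cycequ-moves}: it suffices to check invariance under vertex gluing, which changes neither $C_1$ nor $H_1$, and under twisting, where reversing the orientation of every edge of $\Gamma_2$ produces the required signed isometry of $C_1$ matching the two kernels. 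For part~(\ref{Alb2}) you replace the paper's explicit commutative diagram~(\ref{diag1}) --- which identifies $H_1(\Gamma^2,\Z)$ with $H_1(\Gamma,\Z)$ inside $C_1$ via $E(\Gamma^2)=E(\Gamma)\smallsetminus E(\Gamma)_{\rm sep}$ and notes compatibility with the scalar products --- by an appeal to Lemma~\ref{3lm}(iv) combined with part~(\ref{Alb1}) and the observation (Remark~\ref{e0sep}) that separating edges lie in no cycle; this is equivalent and slightly slicker, at the small cost of making part~(\ref{Alb2}) depend on the sign discussion in part~(\ref{Alb1}) --- harmless here, since the cyclic equivalence of Lemma~\ref{3lm}(iv) is realized by vertex gluings alone, for which the identification of homology lattices is literally the identity.
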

\begin{proof}
Part (\ref{Alb1}) follows from the fact that
$(H_1(\Gamma,\Z);(,))$ is defined entirely in terms
of the inclusion $H_1(\Gamma, \Z)\subset C_1(\Gamma, \Z)$ and of the basis $E(\Gamma)$
of $C_1(\Gamma,\Z)$, which is clearly invariant by cyclic equivalence.

For the second part,
first note that we can naturally identify
\begin{equation}
\label{inc}
E(\Gamma^2)=E(\Gamma)\smallsetminus E(\Gamma)_{\rm sep}\subset E(\Gamma).
\end{equation}
We fix
orientations on $\Gamma^2$ and    $\Gamma$ that are compatible   with respect to
the above (\ref{inc}).
It is clear that there is a   natural commutative diagram
\begin{equation}\label{diag1}
\xymatrix{
H_1(\Gamma^2,\Z) \ar@{^{}->}_{\cong}^{\tilde{j}}[r]  \ar@{_{(}->}[d] & H_1(\Gamma,\Z)\ar@{^{(}->}[d] \\
C_1(\Gamma^2,\Z) \ar@{^{(}->}^j[r] & C_1(\Gamma,\Z),
}
\end{equation}
where the vertical maps are the inclusions, $j$ is induced by the inclusion
(\ref{inc}),
and   $\tilde{j}$ denotes the restriction of $j$.
Part (\ref{Alb2}) follows from the diagram  and the fact that the inclusion
$j$ is compatible with the scalar products $(,)$ on both sides.
\end{proof}
From Proposition~\ref{Alb-eq} we derive that  if $\Gamma^2\equiv_{\rm {cyc}}\Gamma'^2$ then $\Alb(\Gamma)=\Alb (\Gamma')$. Hence the sufficiency in Theorem~\ref{main-thm} is proved.
\end{proof}
In order to prove the other half of the theorem, we need   some preliminaries.
\subsection{The Delaunay decomposition}

Consider the lattice $H_1(\Gamma,\Z)$ inside the real vector space
$H_1(\Gamma,\R)$. Observe that    the scalar
product induced on $C_1(\Gamma,\R)$ by $(,)$  coincides with the Euclidean scalar product.
We denote the norm $\sqrt{(x,x)}$ by $||x||$.

\begin{defi}
\label{Deldef}
For any $\alpha\in H_1(\Gamma,\R)$,  a lattice
element
$x \in H_1(\Gamma,\Z)$ is  called {\it $\alpha$-nearest} if
$$||x-\alpha||={\rm min}\{||y-\alpha||\: : \: y\in H_1(\Gamma,\Z)\}.$$
A {\it Delaunay cell} is defined as  the
closed convex hull of all
elements of $H_1(\Gamma,\Z)$
which are $\alpha$-nearest for some fixed $\alpha \in H_1(\Gamma,\R)$.
Together, all the Delaunay cells constitute a locally finite decomposition of $H_1(\Gamma,\R)$
  into  infinitely many  bounded convex polytopes,   called the {\it Delaunay decomposition}
of $\Gamma$, denoted $\Del (\Gamma)$.

Let $\Gamma$ and $\Gamma'$ be two graphs. We say that
$\Del(\Gamma)\cong \Del(\Gamma')$ if there exists a
linear isomorphism  
$H_1(\Gamma,\R) \to H_1(\Gamma',\R)$
sending $H_1(\Gamma,\Z)$ into $H_1(\Gamma',\Z)$ and mapping the Delaunay 
cells
of $ \Del(\Gamma)$ isomorphically into the Delaunay cells of $\Del(\Gamma')$.
\end{defi}

 \begin{remark}
  \label{Del}
It is well known  that an equivalent,
and for us very useful,
definition  is the following.
The Delaunay decomposition   $\Del (\Gamma)$   is the restriction to
$H_1(\Gamma, \R)$ of the decomposition of $C_1(\Gamma, \R)$
consisting of the standard cubes cut out by all hyperplanes of equation
$e^*=n$ for $e\in E(\Gamma)$ and $n\in \Z$; see \cite[Prop. 5.5]{OS}.
These hyperplanes of $H_1(\Gamma, \R)$, having equations $e^*=n$, are called the {\it generating hyperplanes} of the Delaunay decomposition. Notice that an isomorphism $\Del(\Gamma)\cong \Del(\Gamma')$ induces a bijection between the sets of generating hyperplanes.

 \end{remark}

\begin{prop}\label{Del-equ}
Let $\Gamma$ and $\Gamma'$ be two
graphs.
\begin{enumerate}[(i)]
 \item\label{Del1}
$\Del(\Gamma)$ depends only on $[\Gamma]_{\rm cyc}$.
\item \label{Del2}
$\Del(\Gamma)\cong \Del(\Gamma^3)$ for any choice of  $\Gamma^3$.
\item \label{Del3}
${\rm Del}(\Gamma)\cong  {\rm Del}(\Gamma')$ if and only if
$\Gamma^3\equiv_{\rm cyc}\Gamma'^3$.
\end{enumerate}
\end{prop}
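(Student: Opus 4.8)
The plan is to work throughout with the ``cube'' description of Remark~\ref{Del}: $\Del(\Gamma)$ is the restriction to $H_1(\Gamma,\R)$ of the cubical decomposition of $C_1(\Gamma,\R)$ cut out by the hyperplanes $e^*=n$, for $e\in E(\Gamma)$ and $n\in\Z$. Granting this, part (\ref{Del1}) is formal and follows the argument of Proposition~\ref{Alb-eq}(\ref{Alb1}): a cyclic bijection $\e\colon E(\Gamma)\to E(\Gamma')$ induces an isometry $C_1(\Gamma,\R)\to C_1(\Gamma',\R)$ matching the orthonormal bases $E(\Gamma)$ and $E(\Gamma')$ and carrying $H_1(\Gamma,\Z)$ onto $H_1(\Gamma',\Z)$ (as it takes cycles to cycles); it therefore sends the generating hyperplanes $\{e^*=n\}$ of $\Gamma$ to those of $\Gamma'$, hence $\Del(\Gamma)$ to $\Del(\Gamma')$.

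For part (\ref{Del2}) the candidate isomorphism is the map $\sigma_*\colon H_1(\Gamma,\R)\to H_1(\Gamma^3,\R)$ induced by the contraction $\sigma\colon\Gamma\to\Gamma^3$, which is a lattice isomorphism by Lemma~\ref{3lm}(\ref{3lmb}). Note that $\sigma_*$ is \emph{not} an isometry (contracting one edge of a separating pair shortens the cycles through it), which is exactly why Definition~\ref{Deldef} requires only a lattice-preserving linear isomorphism. The content is to match generating hyperplanes. After fixing an orientation that is totally cyclic on $\Gamma\smallsetminus\Esep$ (it exists by Lemma~\ref{chso}(\ref{c2})), the functional $e^*$ vanishes on $H_1(\Gamma,\R)$ when $e$ is separating (Remark~\ref{e0sep}) and is constant along each C1-set (Corollary~\ref{cH}(\ref{cH2})); hence the distinct families of generating hyperplanes of $\Del(\Gamma)$ are indexed by $\Set\Gamma$. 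Using the expansion of Corollary~\ref{cH}(\ref{cH1}) and the bijection $\psi\colon\Set\Gamma\to E(\Gamma^3)$, $S\mapsto e_S$, of (\ref{Smap}), one checks directly (with compatible orientations) that $e_S^*\circ\sigma_*=e_{S,i}^*$ for every $i$. Thus $\sigma_*$ sends the generating hyperplane $\{e_{S,i}^*=n\}$ of $\Del(\Gamma)$ to $\{e_S^*=n\}$ of $\Del(\Gamma^3)$, identifies the two cubical decompositions, and yields $\Del(\Gamma)\cong\Del(\Gamma^3)$.

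For part (\ref{Del3}) the ``if'' direction is immediate from the first two parts, since $\Gamma^3\equiv_{\rm cyc}\Gamma'^3$ gives $\Del(\Gamma)\cong\Del(\Gamma^3)\cong\Del(\Gamma'^3)\cong\Del(\Gamma')$ by (\ref{Del2}), (\ref{Del1}) and (\ref{Del2}). The substance is the converse. Using (\ref{Del2}) I would replace $\Gamma,\Gamma'$ by $\Gamma^3,\Gamma'^3$, so that by Corollary~\ref{cor3} every edge is its own C1-set; it then suffices to prove that, under this hypothesis, $\Del(\Gamma)\cong\Del(\Gamma')$ forces $\Gamma\equiv_{\rm cyc}\Gamma'$. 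An isomorphism $\phi\colon H_1(\Gamma,\R)\to H_1(\Gamma',\R)$ of Delaunay decompositions sends generating hyperplanes to generating hyperplanes (Remark~\ref{Del}) and, being linear, preserves parallelism; since now the $e^*$ are pairwise non-proportional (Corollary~\ref{cH}(\ref{cH2})), the families of parallel generating hyperplanes are indexed by the edges, and $\phi$ induces a bijection $\e\colon E(\Gamma)\to E(\Gamma')$ together with nonzero scalars $c_e$ satisfying $e^*=c_e\,(\e(e)^*\circ\phi)$.

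The final step—which I expect to be the main obstacle—is to show that $\e$ is cyclic. The key remark is that the restricted functionals $\{e^*|_{H_1(\Gamma,\R)}\}$ represent the cographic (bond) matroid of $\Gamma$: a subset $D\subset E(\Gamma)$ is dependent precisely when some nonzero element of the cut space $H_1(\Gamma,\R)^{\perp}$ is supported on $D$, so the circuits are the minimal edge cuts. Because $e^*=c_e\,(\e(e)^*\circ\phi)$ with $c_e\neq0$ transports linear dependencies among the $e^*$ to linear dependencies among the $\e(e)^*$ without altering supports, $\e$ is an isomorphism of cographic matroids. By matroid duality—the dual of the cographic matroid is the cycle matroid, whose circuits are exactly the cycles of $\Gamma$—the same bijection $\e$ is an isomorphism of the cycle matroids, hence carries cycles of $\Gamma$ to cycles of $\Gamma'$; by Definition~\ref{cyc-equiv} this means $\e$ is cyclic, so $\Gamma\equiv_{\rm cyc}\Gamma'$. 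The conceptual difficulty is that the Delaunay data sees only the cut space directly, so passing from this cographic information to the graphic (cycle) information required by cyclic equivalence is exactly where the duality input enters; the remainder is the hyperplane bookkeeping organized in parts (\ref{Del1}) and (\ref{Del2}).
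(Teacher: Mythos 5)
Your proposal is correct, and parts (\ref{Del1}) and (\ref{Del2}) essentially reproduce the paper's argument: part (\ref{Del2}) in the paper is organized around the injection $\iota\colon C_1(\Gamma^3,\Z)\to C_1(\Gamma,\Z)$, $e_S\mapsto\sum_i e_{S,i}$, whose restriction $\tilde\iota$ to $H_1$ is shown to be a lattice isomorphism (after first reducing to the $2$-edge connected case via diagram (\ref{diag1})); your $\sigma_*$ is precisely the inverse of $\tilde\iota$, and the identity $e_S^*\circ\sigma_*=e_{S,i}^*$ is the same computation as the paper's use of Corollary~\ref{cH}, so the difference is cosmetic. The genuine divergence is in the final step of the necessity in (\ref{Del3}). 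Both arguments extract from the Delaunay isomorphism a bijection $E(\Gamma)\to E(\Gamma')$ (the paper via the generating hyperplanes through the origin, you via parallel classes of generating hyperplanes — equivalent, given that the $e^*$ are nonzero and pairwise non-proportional in the $3$-edge connected case). From there the paper invokes the fact that the $0$-skeleton of the arrangement $\{e^*=n\}$ is the lattice $H_1(\Gamma,\Z)$ itself (citing Oxley and Alexeev), so the edge bijection carries $H_1(\Gamma,\Z)$ to $H_1(\Gamma',\Z)$ inside the chain groups, whence cyclic equivalence. You instead observe that the relation $e^*=c_e\,(\e(e)^*\circ\phi)$ with $c_e\neq 0$ makes $\e$ an isomorphism of the cographic matroids in the sense of Definition~\ref{coma}, and conclude by matroid duality (equivalently by Theorem~\ref{matroid}, which the paper states in Section~\ref{pos}). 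Your route avoids the $0$-skeleton input and stays within representable-matroid linear algebra, at the cost of importing Whitney's cographic-to-cyclic duality at this earlier point; the paper's route recovers the full lattice isomorphism, which is closer to what the subsequent Torelli arguments actually use. Both are complete proofs.
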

\begin{proof}
It is clear that  the Delaunay decomposition is completely
determined by the inclusion $H_1(\Gamma,\Z)\subset C_1(\Gamma,\Z)$ together with the
basis $E(\Gamma)$ of $C_1(\Gamma,\Z)$ defining the scalar product $(,)$. This proves part (\ref{Del1}).

Let us now prove part (\ref{Del2}). First note that $\Del(\Gamma)\cong\Del(\Gamma^2)$,
as it follows easily from diagram (\ref{diag1}) and  Remark~\ref{e0sep}. We can therefore
assume that $\Gamma$ is $2$-edge connected.
Consider the natural bijection
(cf. (\ref{Smap}))
$$
\begin{aligned}
\psi: \Set  \Gamma &\longrightarrow & E(\Gamma^3) \\
S  &\longmapsto &  e_S \
\end{aligned}
$$
where $e_S$ is the only edge in $S$ which is not contracted by the contraction map $\sigma:\Gamma \to \Gamma^3$.
We can thus define an injection $$
\begin{aligned}
C_1(\Gamma^3,\Z) & \stackrel{\iota}{\longrightarrow} &C_1(\Gamma,\Z)\\
e_S &\longmapsto &\sum_{i=1}^{\#S}e_{S,i},\
\end{aligned}
$$
where for any $S\in \Set \Gamma$ we denote, as in (\ref{Sset}),
$S=\{ e_{S,1},\ldots, e_{S,\# S} \}$. Fix now a totally cyclic  orientation on $\Gamma$
and  the induced orientation on $\Gamma^3$; consider the corresponding spaces $H_1(\Gamma,\Z)$ and $H_1(\Gamma^3,\Z)$.
We claim that the above injection induces a natural diagram
\begin{equation}\label{diag2}
\xymatrix{
H_1(\Gamma^3,\Z) \ar@{^{}->}_{\cong}^{\tilde{\iota}}[r] \ar@{_{(}->}[d] & H_1(\Gamma,\Z)\ar@{^{(}->}[d] \\
C_1(\Gamma^3,\Z) \ar@{^{(}->}^{\iota}[r] & C_1(\Gamma,\Z),
}
\end{equation}
where the vertical maps are the inclusions,
and $\tilde{\iota}$ is the restriction of $\iota$.
Indeed, the image of $\iota$ is clearly   the subset $K_2\subset C_1(\Gamma, \Z)$ defined
by
$$
K_2:=\bigcap_{S\in \Set \Gamma} \bigcap _{i,j=1}^{\#S}\ker (e^*_{S,i}-e^*_{S,j}).
$$
Moreover, by Corollary~\ref{cH}    we get
that $H_1(\Gamma,\Z)\subset K_2$.
On the other hand, the contraction map $\sigma:\Gamma \to \Gamma^3$
induces a bijection between cycles, therefore $H_1(\Gamma^3,\Z)$ maps into
$H_1(\Gamma,\Z)$.
It remains  to prove that $H_1(\Gamma^3,\Z)$ surjects onto $H_1(\Gamma,\Z)$.
We use again  Corollary~\ref{cH} , according to which any $c\in H_1(\Gamma,\Z)$
has the form
$c=\sum_{S\in \Set \Gamma} r_S(c)\sum _{i=1}^{\#S} e_{S,i}$, with $r_S(c)\in \Z$. Hence
$$
c=\iota\bigr(\sum_{S\in \Set \Gamma} r_S(c) e_S\bigl).
$$
At this point  (\ref{Del2}) follows from diagram
(\ref{diag2}) and the fact that, by Corollary~\ref{cH},   $e, f$ belong to the same C1-set if and only if
$e^*_{|H_1(\Gamma,\Z)}=f^*_{|H_1(\Gamma,\Z)}$.

The implication if of part (\ref{Del3}) follows from the previous parts.
In order to prove the other implication, we can assume
that $\Gamma$ and $\Gamma'$ are 3-edge connected.

We claim that, as $\Gamma$ is  3-edge connected,
the functionals $e^*$ restricted to $H_1(\Gamma, \R)$ are all non zero and distinct,  
as $e$ varies in $E(\Gamma)$ (and the same holds for $\Gamma'$ of course).
That $e^*$ is nonzero follows from the fact that $\Esep$ is empty (cf.   \ref{e0sep}).
Let $e\neq f$,  now $\{e\}$ and $\{f\}$ are C1-sets (by \ref{cor3}). By Corollary~\ref{cH}
the restrictions of $e^*$ and $f^*$ to $H_1(\Gamma, \R)$ are different. The claim is proved.

The claim means that the intersections of the hyperplanes $\{e^*=0\}_{e\in E(\Gamma)}$ with $H_1(\Gamma, \R)$ are all proper and
distinct, and similarly for $\Gamma'$. 
Now, an isomorphism
$\Del(\Gamma)\cong \Del(\Gamma')$ induces a bijection between the sets of generating hyperplanes passing through the origin; hence, by the claim, we get a bijection
$E(\Gamma)\cong E(\Gamma')$ (which extends to an isomorphism
$C_1(\Gamma, \Z)\cong C_1(\Gamma',\Z)$).

To conclude, we now use a  basic fact from  graph theory (see for example \cite[Sect. 5.1]{Oxl}
 or   \cite[Thm. 3.11]{alex}), according to which
 the $0$-skeleton of the
hyperplane arrangement $\{e^*=n,\  \  e\in E(\Gamma), n\in \Z\}$ in $H_1(\Gamma,\R)$
is the lattice $H_1(\Gamma,\Z)$ itself. Therefore, we deduce that the above
bijection $E(\Gamma)\cong E(\Gamma')$ induces an isomorphism
$H_1(\Gamma,\Z)\cong  H_1(\Gamma',\Z)$,
from which we conclude that $ \Gamma\equiv_{\rm cyc} \Gamma'$.
\end{proof}

\begin{remark}\label{Artam}
A special case of   Proposition~\ref{Del-equ} has been proved by Artamkin
using a different language.
In  \cite{art}, he associates to a graph $\Gamma$ a convex integral polytope
$\Delta(\Gamma)$ in $H_1(\Gamma,\R)$, called the ``simple cycle polytope",
and he proves that a 3-connected graph $\Gamma$ is uniquely determined by
$\Delta(\Gamma)$ (see \cite[Thm. 1]{art}).   $\Delta(\Gamma)$
turns out to be   the union of the maximal dimensional Delaunay cells that have a vertex in
the origin; hence   knowing $\Delta(\Gamma)$ is equivalent to knowing
$\Del(\Gamma)$. Using this observation and \ref{3v}, \cite[Thm. 1]{art}
is equivalent to Proposition \ref{Del-equ}(\ref{Del3}), provided that $\Gamma$ and $\Gamma'$ are 3-connected.
\end{remark}

\subsection{Proof of Theorem \ref{main-thm}: necessity.}
\begin{proof}
Assume   that $\Alb(\Gamma)\cong\Alb(\Gamma')$. Using
\ref{Alb-eq}, we can assume that $\Gamma$ and $\Gamma'$ are
$2$-edge connected.
We fix a totally cyclic orientation on them.
Since the Delaunay decomposition is completely
determined by $(H_1(\Gamma,\Z);(,))$, i.e. by $\Alb(\Gamma)$
(see \cite[Sec. 2]{KS}), we have $\Del(\Gamma)\cong\Del(\Gamma')$. We can thus apply Proposition \ref{Del-equ}(\ref{Del3}),
getting that  $\Gamma^3\equiv_{\rm cyc} \Gamma'^3$.
Therefore, by Proposition~\ref{lift-cyc}(\ref{lift2}) there is a natural bijection,
\begin{equation}
\label{beta}
\Set \Gamma \stackrel{\beta}{\la}\Set \Gamma' ;\  \  \  S\mapsto S':=\beta(S).
\end{equation}
To prove the theorem it suffices to show  that $\beta$ preserves the cardinalities.
In fact by Proposition~\ref{lift-cyc}(\ref{lift2}), this implies that
$ \Gamma\equiv_{\rm cyc}  \Gamma'$.

First, note that
by hypothesis there is an isomorphism, denoted
\begin{equation}
\label{Hiso}
H_1(\Gamma,\Z)\stackrel{\cong}{\la}H_1(\Gamma',\Z);\  \  \  c\mapsto c'
\end{equation}
such that $(c_1,c_2)=(c'_1,c'_2)$  for all $c_i\in H_1(\Gamma,\Z)$.
Pick $c\in H_1(\Gamma,\Z)$; by Corollary~\ref{cH}
we can write
$c=\sum_{S\in \Set \Gamma} r_S(c)\sum _{i=1}^{\#S} e_{S,i}$,
with $r_S(c)\in \Z$;
hence we can define (consistently with \ref{decdelta})
the set
$$
\Set_c\Gamma:=\{S\in \Set \Gamma: r_S(c)\neq 0\}.
$$
We claim that for every $S\in \Set \Gamma$ and every
$c\in  H_1(\Gamma, \Z)$ we have
\begin{equation}
\label{divr}r_{S'}(c')=u(S)r_S(c) ,\  \  \  u(S):=\pm 1;
\end{equation}
in particular, \begin{equation}
\label{div}
S\in \Set_c \Gamma \Leftrightarrow S'\in\Set_{c'} \Gamma' .
\end{equation}
To prove the claim, consider the affine function $f^n_S:C_1(\Gamma, \Z)\to \Z$ defined as
$$
f^n_S:=e_S^*-n,\  \  \  n\in \Z.
$$
By what we said before we have
$$
r_S(c)=n \Leftrightarrow  c\in \ker f^n_S.
$$
Observe that the bijections (\ref{Hiso}) and (\ref{beta}) are compatible with one another.
In other words, for every $c\in H_1(\Gamma, \Z)$, the set $\Set_c \Gamma$ is mapped to
$\Set_{c'} \Gamma '$ by $\beta$.
Therefore
the isomorphism between $\Alb(\Gamma)$ and $\Alb (\Gamma ')$ induces
a bijection between the hyperplanes generating $\Del (\Gamma)$ and those
generating  $\Del (\Gamma ')$
such that $f_{S}^n$ is mapped either to $f_{S'}^n$ or to $f_{S'}^{-n}$
(see Remark~\ref{Del}).
So,  the claim is proved.

To ease the notation, in the sequel  for any $S\in \Set \Gamma$ we denote
$$
e(S):=\sum _{i=1}^{\#S} e_{S,i}.
 $$
 Moreover, if $S\in  \Set_c\Gamma$ for some $c$, we denote
\begin{equation}
\label{rc}
\lambda(c-S):=
\sum _{T\in \Set_c\Gamma \smallsetminus \{S\}}\#  T.
\end{equation}
Observe that for any   cycle $\Delta\subset \Gamma$ of length $\lambda$
and any     $c:=\sum_{e\in E(\Delta)}\pm e\in H_1(\Gamma,\Z)$
(such a $c$ exists for a suitable choice of signs),
we have $\Set_{\Delta}\Gamma=\Set_{c}\Gamma$ and
$$
\lambda=||c||^2=\sum_{S\in \Set_{\Delta}\Gamma}\#S=\#S+\lambda (c-S)
$$
for any $S\in  \Set_{\Delta}\Gamma$.

We shall now prove
 that  the map (\ref{beta}) preserves   cardinalities.
By contradiction, suppose there exists $S\in \Set \Gamma$ such that
\begin{equation}
\label{cardS}
\# S>\# S'.
\end{equation}
By Lemma \ref{intS}, we can find two cycles $\Delta_1$ and $\Delta_2$ of $\Gamma$ such that
$S=E(\Delta_1)\cap E(\Delta_2)$.
For $i=1,2$, there exists an element $c_i\in H_1(\Gamma,\Z)$ given by the formula
$$
c_i= e(S)+\sum_{T\in \Set_{c_i}\Gamma  \smallsetminus \{S\}}\pm e(T)
$$
(by Corollary~\ref{cH}).
The sign before $e(T)$ will play no role, so we can ignore it.

Suppose to fix ideas  that $u(S)=1$ in (\ref{divr}). The case $u(S)=-1$ is treated   in a trivially analogous way
(we omit the details).
By (\ref{divr}),   we have
$$
c_i'= e(S')+\sum_{T\in \Set_{c_i}\Gamma  \smallsetminus \{S\}}\pm e(T')
$$
(recall that $T$ determines $T'$ uniquely).
Therefore, as $||c_i||^2=(c_i,c_i)=(c_i',c_i')= ||c_i'||^2$, using notation (\ref{rc})
$$
||c_i||^2=\#S+\lambda(c_i-S) = ||c_i'||^2=\#S'+\lambda(c_i'-S').
$$
 By (\ref{cardS}) we get for $i=1,2$
 \begin{equation}
\label{cardT}
\lambda(c_i-S)<  \lambda(c_i'-S').
\end{equation}
Now,  $c_1-c_2$ lies, of course, in $H_1(\Gamma, \Z)$; we have
$$
c_1-c_2= \sum_{T\in \Set_{c_1}\Gamma  \smallsetminus \{S\}}\pm e(T)-
\sum_{U\in \Set_{c_2}\Gamma  \smallsetminus \{S\}}\pm e(U).
$$
Since $ \Set_{c_1}\Gamma\cap \Set_{c_2}\Gamma =\{S\}$,    we have
\begin{equation}\label{diff1}
||c_1-c_2||^2=\lambda(c_1-S)+\lambda(c_2-S).
\end{equation}
Arguing in the same way for $c_1'-c_2'$, we get
\begin{equation}\label{diff2}
||c_1'-c_2'||^2=\lambda(c_1'-S')+\lambda(c_2'-S').
\end{equation}
Therefore, using   (\ref{diff2}), (\ref{cardT}) and (\ref{diff1})
$$
||c_1'-c_2'||^2=\lambda(c_1'-S')+\lambda(c_2'-S')>
\lambda(c_1-S)+\lambda(c_2-S)=||c_1-c_2||^2
$$
which contradicts the fact that the isomorphism (\ref{Hiso}) preserves the scalar
products.
\end{proof}
In the proof we applied the next Lemma, which will be used again later on. 
\begin{lemma}
\label{intS}
Let $S\in \Set \Gamma$. For every cycle $\Delta\subset \Gamma$ such that $S\subset E(\Delta)$ there exists a cycle $\hat{\Delta}\subset \Gamma$
such that $S=E(\Delta)\cap E(\hat{\Delta})$.
\end{lemma}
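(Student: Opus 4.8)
The plan is to reduce to a clean combinatorial picture and then obtain $\hat\Delta$ by rerouting $\Delta$ inside each block cut out by $S$. First I would discard separating edges: no cycle of $\Gamma$ can use an edge of $\Esep$, and by Definition~\ref{C1} the set $S$ is a C1-set of a single connected component of $\Gamma\setminus\Esep$. Hence I may assume $\Gamma$ connected and $\Esep=\emptyset$. Now I would read off the block structure: since $\Gamma(S)$ is a cycle of length $\#S$ and its vertices are the images of the connected components of $\Gamma\setminus S$, the graph $\Gamma\setminus S$ has exactly $\#S$ connected components $Z_1,\dots,Z_k$, each free of separating edges, and the edges of $S$ join these blocks cyclically (each $Z_i$ is incident to exactly two edges of $S$). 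Because $S\subset E(\Delta)$ and, by Remark~\ref{decdelta}, $E(\Delta)$ is a disjoint union of C1-sets, the cycle $\Delta$ uses every edge of $S$ and therefore meets each $Z_i$ in a simple path $\pi_i$ between the two vertices $a_i,b_i$ at which the two $S$-edges incident to $Z_i$ attach.

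Next I would build $\hat\Delta$ block by block, keeping all of $S$ but replacing each internal path $\pi_i$ by an alternative simple path $\hat\pi_i$ in $Z_i$ from $a_i$ to $b_i$ that is edge-disjoint from $\pi_i$ (nothing is needed when $a_i=b_i$). Setting $\hat\Delta:=S\cup\bigcup_i\hat\pi_i$ then yields a genuine simple cycle, since the $Z_i$ are pairwise vertex-disjoint, and it again traverses all of $S$. Outside $S$ both cycles live inside the blocks, where they are edge-disjoint by construction, so $E(\Delta)\cap E(\hat\Delta)=S$, as required. Moreover this shape is forced: any cycle meeting $\Delta$ precisely in $S$ contains an edge of $S$, hence all of $S$ by Lemma~\ref{C1lm}(\ref{C13}), and so must enter and leave each $Z_i$ exactly along its two $S$-edges; thus rerouting inside the blocks is the only possible mechanism.

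The main obstacle is exactly this block-wise rerouting: finding, inside each $2$-edge-connected block $Z_i$, an $a_i$--$b_i$ path that avoids the edges of the prescribed path $\pi_i$. For the use made of the statement in the proof above one only needs \emph{some} pair of cycles meeting in $S$, and there I would simply invoke Menger's theorem (see \cite[Chap. 3]{Die}) to select two edge-disjoint $a_i$--$b_i$ paths in each $Z_i$ and assemble them into $\Delta_1$ and $\Delta_2$ with $E(\Delta_1)\cap E(\Delta_2)=S$. The delicate point, which I expect to require by far the most care, is to avoid the \emph{given} $\pi_i$ rather than merely some auxiliary path; this is precisely where the $2$-edge-connectivity of each block must be exploited, and settling this step completes the argument.
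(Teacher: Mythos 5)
Your reduction to the block structure of $\Gamma\smallsetminus S$ and your Menger argument are sound, but the ``delicate point'' you defer --- rerouting the \emph{given} path $\pi_i$ inside each block --- cannot be settled, because the lemma as stated (with the universal quantifier over $\Delta$) is in fact false. Take $\Gamma=K_4$ on vertices $a,b,u,v$, let $S=\{ab\}$ (a C1-set, since $K_4$ is $3$-edge connected, cf.\ Corollary~\ref{cor3}), and let $\Delta$ be the $4$-cycle with edges $ab,au,uv,vb$. A cycle $\hat\Delta$ with $E(\Delta)\cap E(\hat\Delta)=\{ab\}$ would have to contain $ab$ and otherwise use only $av$ and $bu$; but $\{ab,av,bu\}$ is a path and contains no cycle at all. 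This is precisely an instance of your obstruction: in the $2$-edge connected block $Z=K_4\smallsetminus ab$ there is no $a$--$b$ path edge-disjoint from the path $a,u,v,b$. So no amount of care closes that gap. (The paper's own proof, which iteratively deletes C1-sets and asserts that $\#\bigl(E(\Delta)\cap E(\Delta_i)\bigr)$ strictly decreases, breaks on the same example: after the first deletion the new cycle can re-acquire edges of $\Delta$ that the previous one avoided, e.g.\ passing from $\Delta_2=\{ab,bv,va\}$ to $\Delta_3=\{ab,bu,uv,va\}$ keeps the intersection at size two.)

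What survives, and what you actually prove, is the existential version: for every C1-set $S$ there exist two cycles $\Delta_1,\Delta_2$ with $E(\Delta_1)\cap E(\Delta_2)=S$. Your argument for this is correct and complete: each component $Z_i$ of $\Gamma\smallsetminus S$ is $2$-edge connected, Menger's theorem gives two edge-disjoint $a_i$--$b_i$ paths in $Z_i$ (trivially when $a_i=b_i$ or $Z_i$ is a single vertex), and splicing these with $S$ produces two cycles meeting exactly in $S$, since distinct blocks are vertex-disjoint. Every invocation of the lemma in the paper --- in the proofs of Theorems~\ref{main-thm}, \ref{main-thml} and \ref{MZ-conj} --- uses only this existential form, so your proof repairs the statement rather than merely approximating it, and it does so by a genuinely different (and, as it turns out, the only viable) route compared with the paper's induction on $\#\bigl(E(\Delta)\cap E(\Delta_i)\bigr)$.
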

\begin{proof}
It is clear that it suffices to assume $\Gamma$ free from separating edges.
We begin by reducing to the case $\#S=1$. Choose an edge $e\in S$ and
consider the map
$\Gamma \to \ov{\Gamma}$ contracting all edges of $S$ but $e$. Then $\sigma$ induces a bijection between the cycles of $\Gamma$ and those of $\ov{\Gamma}$, and it is clear that
if the statement holds on $\ov{\Gamma}$ it also holds on ${\Gamma}$.

So, let $S=\{e\}$ and let $\Delta$ be a cycle containing $e$.
We shall exhibit an iterated procedure which yields, at its $i$-th step, a cycle $\Delta_{i}$
containing $e$ and such that $\# E(\Delta)\cap  E(\Delta_i)$ decreases at each step.
Set $\Delta_1=\Delta$ and $S_1:=S=\{e\}$; if $\Delta$ has length 1
  we   take $\hat{\Delta}=\Delta$ and we are done.
So, suppose $\#E(\Delta)  \geq 2$; we can decompose $E(\Delta)$ as a disjoint union of C1-sets
$E(\Delta)=\{e\}\cup S_2\cup\ldots\cup S_{h}$, with $S_i\in \Set \Gamma$
(cf. Remark~\ref{decdelta}).
For the second step
consider $\Gamma_2:=\Gamma \smallsetminus S_2$; then $\Gamma_2$ has no   separating edges, therefore there exists a cycle $\Delta_2\subset \Gamma_2$ containing $e$.
Obviously $\Delta_2$ does not contain $S_2$, hence
$\# E(\Delta)\cap  E(\Delta_2)< \# E(\Delta)\cap  E(\Delta_1)$.
If $\Delta_2$ does not contain
any other edge of $\Delta$
 we take $\Delta_2=\hat{\Delta}$ and we are done. Otherwise we repeat the process within
 $\Gamma_2$. Namely, we have $E(\Delta_2)=\{e\}\cup S_2^2\cup\ldots\cup S^2_{h}$, with $S_i^2\in \Set \Gamma_2$, set $\Gamma_3:=\Gamma_2\smallsetminus S_2^2$.
 There exists a cycle $\Delta_3\subset \Gamma_3$ containing $e$, and it is clear that
 $\# E(\Delta)\cap  E(\Delta_3)< \# E(\Delta)\cap  E(\Delta_2)$.

Obviously this   process must terminate after, say $m$, steps,
 when we necessarily  have
$E(\Delta)\cap  E(\Delta_m)=\{e\}$.
\end{proof}

\section{Torelli theorem for metric graphs and tropical curves}
\label{trp}
In this section we apply the methods and results of the previous part to study the Torelli problem for
tropical curves. We refer to \cite{MIK3}, or to \cite{MZ}, for  details about  the theory of tropical curves and their Jacobians.

\subsection{Tropical curves, metric graphs and
associated tori}\label{Sec4.1}

Let $C$ be a compact tropical curve;
$C$ is endowed with
a Jacobian variety, $\Jac (C)$, which is a principally polarized tropical Abelian variety
 (see \cite[Sec. 5]{MZ} and \cite[Sect 5.2]{MIK3});
 we shall denote $(\Jac (C), \Theta_C)$
 the principally polarized Jacobian of $C$, where $\Theta_C$ denotes the principal polarization
 (see Remark~\ref{albjac} below).  Observe that  two tropically equivalent curves have isomorphic Jacobians.
As we stated in  the introduction,  we want to study the following

\

\noindent
{\bf{Problem.}}
{\it {For which compact tropical curves  $C$ and $C'$  there is an isomorphism $(\Jac (C), \Theta_C) \cong
(\Jac (C'), \Theta_{C'})$?}}

\

\noindent
We   will     answer     this question
in  Theorem \ref{main-thmt}.

As we already mentioned, the connection with the earlier sections of this paper comes from  a result of G. Mikhalkin and I. Zharkov, establishing that tropical curves are closely related to metric graphs.\begin{defi}
\label{mg}
A metric graph $(\Gamma, l)$ is a finite graph $\Gamma$ endowed with a function
$l:E(\Gamma)\to \R_{>0}$
called the {\it length function}.
\end{defi}
\begin{remark}
\label{leaves}
Our  definition of metric graph coincides with that of \cite{MZ}
only if the graph has valence at least 2. The difference occurs in the length function,
whereas the graph is the same.
More precisely,
 the definition of length function used in \cite{MZ} differs from ours, 
as it assigns the value $+\infty$ to every edge adjacent to a vertex of valence 1;
 such edges are called {\it leaves}.
With this definition, metric graphs are in bijection with  tropical curves.

To avoid trivial cases,  we shall always assume that our tropical curves have genus at least 2.
Under this assumption, by  \cite[Prop. 3.6]{MZ},  there is a one to one correspondence
between  tropical equivalence classes of compact tropical curves    and metric graphs with
valence at least 3
(i.e such that every vertex  has at least three incident edges).

Therefore, from now on,  we identify compact tropical curves, up to tropical 
equivalence, with metric graphs of valence at least 3.

\end{remark}
\begin{remark}
\label{termtrop}
Since to every compact tropical curve $C$ we associate a
unique finite graph $\Gamma$, we will use for $C$ the graph theoretic terminology.
In particular, we shall say that $C$ is $k$-connected if so is $\Gamma$.
\end{remark}
Given a metric graph $(\Gamma, l)$,  we define the scalar product $(,)_l$ on $C_1(\Gamma,\R)$
as follows
$$(e,e')_l=\begin{cases}
l(e)& \text{ if } e=e', \\
0 & \text{ otherwise. }
\end{cases}
$$
In analogy with Definitions~\ref{alb},
\ref{cyc-equiv} and \ref{3-conn}
  we shall define the Albanese torus, the cyclic equivalence, and  the $3$-edge connectivization
   for metric graphs.

\begin{defi}\label{Alb-torusl}
The Albanese torus $\Alb(\Gamma,l)$ of the metric graph $(\Gamma,l)$ is
$$
\Alb(\Gamma,l):=\bigr(H_1(\Gamma,\R)/H_1(\Gamma,\Z); (,)_l\bigl)
$$
(with the flat metric derived from the scalar product $(,)_l$).
\end{defi}
\begin{remark}
\label{albjac}
By \cite[Sect. 6.1 p. 218]{MZ} we can naturally identify
 $(\Jac (C),\Theta_C)$ with the Albanese torus ${\rm Alb}(\Gamma,l)$.
\end{remark}

\begin{defi}\label{cyc-equivl}
Let  $(\Gamma,l)$ and $(\Gamma',l')$ be two metric graphs.
We say that  $(\Gamma,l)$ and $(\Gamma',l')$ are {\it cyclically equivalent}.
and we write $(\Gamma,l)\equiv_{\rm cyc} (\Gamma',l')$, if there exists a cyclic bijection
$\e:E(\Gamma)\to E(\Gamma')$ such that $l(e)=l'(\epsilon(e))$ for all $e\in E(\Gamma)$.
The cyclic equivalence class of $(\Gamma,l)$ will be  denoted by $[(\Gamma,l)]_{\rm cyc}$.
\end{defi}

\begin{defi}\label{3-connl}
A {\it $3$-edge connectivization} of a metric graph $(\Gamma,l)$ is a
  metric graph $(\Gamma^3, l^3)$, where $\Gamma^3$ is a
$3$-edge connectivization of $\Gamma$, and $l^3$ is the length function  defined
as follows, $$l^3(e_S)=\sum_{e\in \psi^{-1}(e_S)} l(e)=\sum_{e\in  S } l(e)
$$
where, with the notation of (\ref{Smap}),
 $\psi:\Set \Gamma \to E(\Gamma^3)$ is the natural  bijection mapping $S$ to $e_S$.
\end{defi}
\begin{remark}
\label{3ecl}
Using lemma~\ref{3lm}(\ref{3lmcyc}) we see that all the $3$-edge connectivizations of a metric graph $(\Gamma,l)$
are cyclically equivalent.

Observe also that $[(\Gamma^3, l^3)]_{\rm cyc}$  is completely independent of the separating edges of $\Gamma$, and on the value that $l$ takes on them.
Therefore, $(\Gamma^3, l^3)$ is well defined also if   $l$ takes value
$+\infty$ on the leaves of $\Gamma$.
This enables us to
define $[(\Gamma^3, l^3)]_{\rm cyc}$ for a graph $(\Gamma, l)$, metric in the sense of \cite{MZ},
associated to a tropical curve $C$ (see Remark~\ref{leaves}).

Consistently with Remark~\ref{3ec}, we call
$[(\Gamma^3, l^3)]_{\rm cyc}$  the {\it 3-edge connected class of $C$}.
With this terminology, we state the main result of this section:
\end{remark}

\begin{thm}\label{main-thmt}
Let $C$ and $C'$ be  compact tropical curves.
Then   $(\Jac C, \Theta_C)\cong (\Jac C', \Theta_{C'})$
if and only if $C$ and $C'$ have the same 3-edge connected class.

Suppose that   $C$ is 3-connected. Then $(\Jac C, \Theta_C)\cong (\Jac C', \Theta_{C'})$
if and only if $C$ and $C'$ are
tropically  equivalent.
\end{thm}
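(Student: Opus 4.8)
The plan is to move the whole problem onto the metric graphs representing the tropical curves, where the machinery of the previous sections applies. By Remark~\ref{albjac} the polarized Jacobian $(\Jac C,\Theta_C)$ is canonically the Albanese torus $\Alb(\Gamma,l)$ of the metric graph $(\Gamma,l)$ attached to $C$, and by Remark~\ref{leaves} I take $(\Gamma,l)$ to be the representative with all valences $\geq 3$; under this correspondence $C\equiv_{\rm trop}C'$ means exactly $(\Gamma,l)\cong(\Gamma',l')$. Hence an isomorphism $(\Jac C,\Theta_C)\cong(\Jac C',\Theta_{C'})$ is the same as an isometry $\Alb(\Gamma,l)\cong\Alb(\Gamma',l')$, and both assertions of the theorem become statements about metric graphs.

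For the first assertion I would appeal to the metric Torelli theorem, Theorem~\ref{main-thml}, asserting that $\Alb(\Gamma,l)\cong\Alb(\Gamma',l')$ if and only if $(\Gamma,l)$ and $(\Gamma',l')$ share the same $3$-edge connected class, which by Remark~\ref{3ecl} is precisely the $3$-edge connected class of the curve. The sufficiency rests on the metric analogue of Proposition~\ref{Alb-eq}, namely $\Alb(\Gamma,l)\cong\Alb(\Gamma^3,l^3)$: the length $l^3(e_S)=\sum_{e\in S}l(e)$ of Definition~\ref{3-connl} is chosen precisely so that the map $\tilde{\iota}$ of diagram~(\ref{diag2}) is an isometry, because $(e_S,e_S)_{l^3}=\sum_i l(e_{S,i})=(\sum_i e_{S,i},\sum_i e_{S,i})_l$. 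Theorem~\ref{main-thml} in turn is obtained by rerunning the proof of Theorem~\ref{main-thm} with $(,)_l$ replacing $(,)$: the combinatorial inputs, Corollary~\ref{cH} and Lemma~\ref{intS}, do not involve the metric, while the norm comparisons now produce, in place of the equality $\#S=\#S'$, the equality of total lengths $l^3(e_S)=l'^3(e_{S'})$ that characterizes $(\Gamma^3,l^3)\equiv_{\rm cyc}(\Gamma'^3,l'^3)$.

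For the second assertion, assume $C$ is $3$-connected, so $\Gamma$ is $3$-connected; then $\Gamma=\Gamma^3$ and $[\Gamma]_{\rm cyc}=\{\Gamma\}$ by Remark~\ref{3v}. The ``if'' direction is the general observation of Section~\ref{Sec4.1} that tropically equivalent curves have isomorphic Jacobians. For ``only if'', the first assertion gives $(\Gamma'^3,l'^3)\equiv_{\rm cyc}(\Gamma,l)$, and Remark~\ref{3v} forces $\Gamma'^3\cong\Gamma$ with $l'^3$ matching $l$; it then suffices to prove $\Gamma'=\Gamma'^3$, i.e.\ that $\Gamma'$ is $3$-edge connected (Corollary~\ref{cor3}). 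Separating edges are easily excluded: were $E(\Gamma')_{\rm sep}\neq\emptyset$, the graph $\Gamma'\smallsetminus E(\Gamma')_{\rm sep}$ would be disconnected, so by the last part of Lemma~\ref{3lm} together with Remark~\ref{3ec} the class $[\Gamma'^3]_{\rm cyc}$ would contain a disconnected graph, contradicting $[\Gamma'^3]_{\rm cyc}=[\Gamma]_{\rm cyc}=\{\Gamma\}$.

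The crux, and the main obstacle, is to rule out separating pairs of edges in the now $2$-edge connected graph $\Gamma'$. Suppose $\Gamma'$ had a $\mathrm{C1}$-set $S$ with $\#S\geq2$; choosing two edges of $S$ yields, by Lemma~\ref{C1lm}(\ref{C14}), a $2$-edge cut splitting $\Gamma'$ into two pieces, each nontrivial because every vertex of $C'$ has valence $\geq 3$. Passing to $\Gamma'^3$ contracts all but one edge of $S$, so the two pieces come to communicate only through the images of these two edges, that is, through at most two vertices; these images then form a vertex cut of size $\leq 2$ in $\Gamma'^3$, contradicting the $3$-connectivity of $\Gamma'^3\cong\Gamma$. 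This is the metric counterpart of Corollary~\ref{main-cor}, but genuinely harder, since here the relevant reduction is the $3$-edge connectivization rather than merely the $2$-edge one, and the valence hypothesis on tropical curves plays the role that the absence of valence-$1$ vertices plays in Corollary~\ref{main-cor}. Once this is settled, $\Gamma'=\Gamma'^3\cong\Gamma$ as metric graphs, whence $C\equiv_{\rm trop}C'$.
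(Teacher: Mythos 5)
Your argument follows the paper's proof essentially step for step: both reduce the statement to metric graphs via Remark~\ref{albjac} and Remark~\ref{leaves}, delegate the first assertion to Theorem~\ref{main-thml}, and prove the second by showing that the contraction $\Gamma'\to\Gamma'^3$ must be trivial, ruling out separating edges and separating pairs of edges by means of the $3$-connectivity of $\Gamma'^3\cong\Gamma$ together with the valence condition on $\Gamma'$. The only (harmless) variation is your exclusion of separating edges via the disconnectedness of $\Gamma'\smallsetminus E(\Gamma')_{\rm sep}$ and the rigidity of $[\Gamma]_{\rm cyc}$, where the paper instead observes that contracting such an edge would create a separating vertex in $\Gamma'^3$; both routes work, and your handling of the separating-pair case is at the same level of detail as the paper's.
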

\begin{proof}
The first statement is a straightforward consequence of  the next Theorem~\ref{main-thml}.

We let $\Gamma$ and $\Gamma '$ be the metric graphs associated to $C$ and $C'$ respectively.
Suppose now that $C$ is 3-connected. This means (cf. \ref{termtrop}) that the associated
graph is 3-connected (and hence 3-edge connected).
By the previous part $\Gamma =\Gamma^3\cong \Gamma'^3$.
Recall that, by convention (cf. Remark~\ref{leaves}),  the graph $\Gamma'$ has valence at least $3$.
To finish the proof it suffices  to show that the map $\sigma :\Gamma'\to \Gamma'^3$ is the identity map;
to do that we will use the fact that $\Gamma '^3$ is 3-connected, as $\Gamma$ is.

Suppose  $\sigma $ contracts a separating edge  $e$ of $\Gamma '$; observe that the two vertices adjacent to $e$
are both separating vertices for $\Gamma '$, because $\Gamma '$ has no vertices of  valence 1.
But then $\sigma(e)$ would be a separating vertex of $\Gamma'^3$,
which is impossible.

If $\sigma $ contracts one edge  of a separating pair,
  arguing in a similar way we obtain that
  $\Gamma '$ has a separating pair of vertices which is mapped by $\sigma$
  to a  separating pair of vertices  of $ \Gamma'^3$, which is impossible.
Therefore $\sigma$ is the identity and we are done.
\end{proof}

\begin{thm}\label{main-thml}
Let $(\Gamma,l)$ and $(\Gamma',l')$ be two metric graphs.
Then   $\Alb(\Gamma,l)\cong \Alb(\Gamma',l')$
if and only if $ [(\Gamma^3,l^3)]_{\rm cyc}=[(\Gamma'^3, l'^3)]_{\rm cyc}$.
\end{thm}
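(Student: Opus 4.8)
The plan is to reduce the metric statement to the unweighted Torelli theorem (Theorem~\ref{main-thm}) together with the necessity argument already developed for graphs, adapting it to track edge lengths. The sufficiency (``if'') direction should be the easy half: if $[(\Gamma^3,l^3)]_{\rm cyc}=[(\Gamma'^3,l'^3)]_{\rm cyc}$, then I would show directly that $\Alb(\Gamma,l)\cong\Alb(\Gamma',l')$ by producing an explicit lattice isometry. The diagram~(\ref{diag2}) and Corollary~\ref{cH} already identify $H_1(\Gamma,\Z)\cong H_1(\Gamma^3,\Z)$, and under this identification a cycle class $c=\sum_S r_S(c)\,e(S)$ has squared norm $\|c\|_l^2=\sum_S r_S(c)^2\bigl(\sum_{i=1}^{\#S}l(e_{S,i})\bigr)=\sum_S r_S(c)^2\,l^3(e_S)$. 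Thus the metric $\Alb(\Gamma,l)$ depends, through the scalar product $(,)_l$ pushed to $H_1(\Gamma^3,\Z)$, only on the data $[(\Gamma^3,l^3)]_{\rm cyc}$; a cyclic length-preserving bijection on the $3$-edge connectivizations then induces the required isometry of Albanese tori.

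For the necessity (``only if'') direction, I would mimic the structure of the proof of Theorem~\ref{main-thm}, necessity. First, using the metric analogue of Proposition~\ref{Alb-eq} I reduce to the case where $\Gamma$ and $\Gamma'$ are $2$-edge connected (separating edges and their lengths are invisible to the Albanese torus), and fix totally cyclic orientations. An isomorphism $\Alb(\Gamma,l)\cong\Alb(\Gamma',l')$ is an isometry $H_1(\Gamma,\Z)\stackrel{\cong}{\la}H_1(\Gamma',\Z)$, $c\mapsto c'$, with $(c_1,c_2)_l=(c_1',c_2')_{l'}$. Since the metric determines the Delaunay decomposition, I get $\Del(\Gamma)\cong\Del(\Gamma')$, hence by Proposition~\ref{Del-equ}(\ref{Del3}) a cyclic bijection $\Gamma^3\equiv_{\rm cyc}\Gamma'^3$ and a corresponding bijection $\beta:\Set\Gamma\to\Set\Gamma'$, $S\mapsto S'$, with the sign relation $r_{S'}(c')=u(S)r_S(c)$, $u(S)=\pm1$, exactly as in~(\ref{divr}). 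It remains to show $\beta$ matches up lengths, i.e. $l^3(e_S)=l'^3(e_{S'})$ for every $S$; this is the metric refinement of the cardinality-preservation step.

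The main obstacle will be this length-matching step, replacing the purely combinatorial inequality~(\ref{cardS})--(\ref{cardT}) by a quantitative metric argument. The strategy is to fix $S\in\Set\Gamma$, apply Lemma~\ref{intS} to find cycles $\Delta_1,\Delta_2$ of $\Gamma$ with $S=E(\Delta_1)\cap E(\Delta_2)$, and take $c_i\in H_1(\Gamma,\Z)$ supported on $E(\Delta_i)$ with $r_S(c_i)=1$. Writing $\ell(S):=\sum_{i=1}^{\#S}l(e_{S,i})=l^3(e_S)$ and $\ell'(S'):=l'^3(e_{S'})$, I compute, using $(c_i,c_i)_l=(c_i',c_i')_{l'}$ and~(\ref{divr}),
\begin{equation}
\label{metricnorm}
\ell(S)+\mu_l(c_i-S)=\ell'(S')+\mu_{l'}(c_i'-S'),
\end{equation}
where $\mu_l(c_i-S):=\sum_{T\in\Set_{c_i}\Gamma\smallsetminus\{S\}}\ell(T)$ is the weighted analogue of~(\ref{rc}). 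Since $\Set_{c_1}\Gamma\cap\Set_{c_2}\Gamma=\{S\}$, the cross term cancels and I get $\|c_1-c_2\|_l^2=\mu_l(c_1-S)+\mu_l(c_2-S)$, and likewise for $c_1',c_2'$. Adding the two instances of~(\ref{metricnorm}) and using the isometry on $c_1-c_2$ forces $2\ell(S)=2\ell'(S')$, whence $\ell(S)=\ell'(S')$, i.e. $l^3(e_S)=l'^3(e_{S'})$. This shows $\beta$ is a length-preserving cyclic bijection between $\Gamma^3$ and $\Gamma'^3$, so $[(\Gamma^3,l^3)]_{\rm cyc}=[(\Gamma'^3,l'^3)]_{\rm cyc}$. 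The delicate point to verify carefully is that the sign ambiguity $u(S)=\pm1$ does not obstruct the cancellation of cross terms and that the weighted sums $\mu_l$ transform consistently under $\beta$; handling the case $u(S)=-1$ should again be routine, by symmetry.
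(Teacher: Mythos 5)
Your proposal is correct and follows essentially the same route as the paper's proof: sufficiency via the isometry $\iota$ of diagram (\ref{diag2}) (this is Proposition~\ref{Alb-eql}), and necessity via the Delaunay decomposition, Proposition~\ref{Del-equ}(\ref{Del3}), the sign relation (\ref{divrl}), Lemma~\ref{intS}, and the norm computation on $c_1$, $c_2$ and $c_1-c_2$ --- the paper merely phrases the final length-matching step as a proof by contradiction (assuming $l(e)>l'(e')$), whereas you add the two norm identities and subtract $\|c_1-c_2\|^2=\|c_1'-c_2'\|^2$ to get $2\ell(S)=2\ell'(S')$ directly; it is the same computation. The one ingredient you pass over silently is that $\Del(\Gamma,l)=\Del(\Gamma)$, i.e.\ that the Delaunay decomposition does not depend on the edge lengths (Mumford's theorem, cited as \cite[Thm 18.2]{nam} in Lemma~\ref{Del-lemma}(\ref{Dii})): this is what allows you to pass from the isometry of the \emph{weighted} tori to an isomorphism of the \emph{combinatorial} Delaunay decompositions and hence to invoke Proposition~\ref{Del-equ}(\ref{Del3}), so it should be cited explicitly.
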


\subsection{Proof of the Torelli theorem for metric graphs}
The proof of Theorem~\ref{main-thml} follows the same steps as the proof of Theorem~\ref{main-thm}.
The ``if"  part    follows easily  from the following

\begin{prop}\label{Alb-eql}
Let $(\Gamma,l)$ be a metric graph.
\begin{enumerate}[(i)]
 \item \label{Alb1l}
$\Alb(\Gamma,l)$ depends only on $[(\Gamma,l)]_{\rm cyc}$.
\item \label{Alb2l}
$\Alb(\Gamma,l)\cong \Alb(\Gamma^3,l^3)$ for any $3$-edge connectivization of $(\Gamma,l)$.
\end{enumerate}
\end{prop}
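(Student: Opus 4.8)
The plan is to mirror the proof of Proposition~\ref{Alb-eq}, adding to each step the bookkeeping of the length function. For part~(\ref{Alb1l}), I would observe that, exactly as in the proof of Proposition~\ref{Alb-eq}(\ref{Alb1}), the torus $\Alb(\Gamma,l)$ is determined entirely by the inclusion $H_1(\Gamma,\Z)\subset C_1(\Gamma,\Z)$, the distinguished basis $E(\Gamma)$ of $C_1(\Gamma,\Z)$, and the scalar product $(,)_l$. A cyclic equivalence of metric graphs is, by Definition~\ref{cyc-equivl}, a cyclic bijection $\e:E(\Gamma)\to E(\Gamma')$ with $l(e)=l'(\e(e))$; it therefore carries the basis $E(\Gamma)$ to $E(\Gamma')$, the sublattice $H_1(\Gamma,\Z)$ onto $H_1(\Gamma',\Z)$ (because it preserves cycles), and, thanks to the length-preservation condition, the scalar product $(,)_l$ to $(,)_{l'}$. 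This produces an isometry of Albanese tori, which is all that part~(\ref{Alb1l}) asks.

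For part~(\ref{Alb2l}), the key tool is the diagram~(\ref{diag2}) already constructed in the proof of Proposition~\ref{Del-equ}(\ref{Del2}). First I would dispose of the separating edges: since each $e\in\Esep$ satisfies $e^*_{|H_1(\Gamma,\R)}=0$ (Remark~\ref{e0sep}), the restriction of $(,)_l$ to $H_1(\Gamma,\R)$ involves only the non-separating edges, so the passage from $(\Gamma,l)$ to $(\Gamma^2, l_{|E(\Gamma^2)})$ via the identification~(\ref{inc}) is an isometry, and we may assume $\Gamma$ is $2$-edge connected. Then the C1-sets partition $E(\Gamma)$ (Lemma~\ref{C1lm}(\ref{C12})), and the map $\iota:C_1(\Gamma^3,\Z)\to C_1(\Gamma,\Z)$, $e_S\mapsto\sum_{i=1}^{\#S}e_{S,i}$, restricts to the lattice isomorphism $\tilde\iota:H_1(\Gamma^3,\Z)\stackrel{\cong}{\la}H_1(\Gamma,\Z)$ established in that earlier proof.

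The decisive, and essentially the only new, point is that this $\tilde\iota$ is an \emph{isometry} for the metrics $(,)_{l^3}$ and $(,)_l$. This reduces to a one-line computation: for $S,T\in\Set\Gamma$, using that distinct C1-sets are disjoint,
$$
(\iota(e_S),\iota(e_T))_l=\sum_{i=1}^{\#S}\sum_{j=1}^{\#T}(e_{S,i},e_{T,j})_l=\delta_{S,T}\sum_{e\in S}l(e)=\delta_{S,T}\,l^3(e_S)=(e_S,e_T)_{l^3},
$$
where the third equality is precisely Definition~\ref{3-connl}. Hence $\iota$, and therefore $\tilde\iota$, preserves the scalar products, and the isometry $\Alb(\Gamma^3,l^3)\cong\Alb(\Gamma,l)$ follows from diagram~(\ref{diag2}).

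I do not anticipate a genuine obstacle here: the lattice isomorphism $\tilde\iota$ and diagram~(\ref{diag2}) are already in hand, and the metric content collapses to the displayed identity. The one subtlety worth emphasising, which is really the heart of the matter rather than an obstacle, is that the summation formula defining $l^3$ is exactly what makes $\iota$ an isometry, whereas the unit-length scalar product does not have this property; this is why the non-metric Proposition~\ref{Alb-eq} could only reach $\Gamma^2$ and not $\Gamma^3$, and recognising that the length function $l^3$ repairs precisely this discrepancy is the key observation.
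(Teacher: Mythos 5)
Your proof is correct and takes essentially the same approach as the paper: part (i) by the same invariance observation, and part (ii) by showing that the map $\iota$ of diagram (\ref{diag2}) is an isometry for $(,)_{l^3}$ and $(,)_l$, via exactly the computation the paper gives (the paper simply splits your Kronecker-delta identity into the cases $S=T$ and $S\neq T$). Your closing remark --- that the summation formula defining $l^3$ is precisely what lets the metric statement reach $\Gamma^3$, whereas the unit-length Proposition~\ref{Alb-eq} stops at $\Gamma^2$ --- accurately captures why the two propositions are stated differently.
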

\begin{proof}
Part (\ref{Alb1l}) follows from the fact that
$(H_1(\Gamma,\Z);(,)_l)$ is defined entirely in terms
of the inclusion $H_1(\Gamma, \Z)\subset C_1(\Gamma, \Z)$ and of the values of
$(,)_l$ on the orthogonal basis $E(\Gamma)$
of $C_1(\Gamma,\Z)$, all of which is clearly invariant by cyclic equivalence.

To prove part (\ref{Alb2l}) we use the proof of Proposition~\ref{Del-equ}(\ref{Del2}), to which we now refer
for the notation.

Consider the diagram (\ref{diag2}). The point is  that
the inclusion $\iota$  is compatible with the scalar
product $(,)_l$ on the right and the scalar product $(,)_{l^3}$ on the left. More precisely, for every edge $e_S$ of $\Gamma^3$
(so that $S\in \Set \Gamma$) we have (by definition of $l^3$)
$$
(e_S,e_S)_{l^3}=l^3(e_S)=\sum_{e\in S}l(e)= \bigl(\sum_{i=1}^{\#S}e_{S,i},\sum_{i=1}^{\#S}e_{S,i}\bigr)_l
=\bigl(\iota(e_S),\iota(e_S)\bigr)_l.
$$
On the other hand if $T\in \Set \Gamma$ with $T\neq S$ we have
$0=(e_S,e_T)_{l^3}=(\iota(e_S),\iota(e_T))_l$
(since $S\cap T=\emptyset$).

Therefore (\ref{Alb2l}) is proved, and with it the sufficiency part of Theorem~\ref{main-thml}.
\end{proof}
To prove the opposite implication of Theorem~\ref{main-thml},
we need the following
\begin{defi}\label{del-metr}
The Delaunay decomposition $\Del(\Gamma,l)$ associated to the metric graph
$(\Gamma,l)$ is the Delaunay decomposition (cf. Definition~\ref{Deldef}) associated to the scalar product
$(,)_l$ on $H_1(\Gamma,\R)$ with respect to the lattice $H_1(\Gamma,\Z)$.
\end{defi}

\begin{lemma}\label{Del-lemma}
Let $(\Gamma,l)$ be a metric graph. Then
\begin{enumerate}[(i)]
\item
\label{Di}
$\Del(\Gamma,l)$ is determined by $\Alb(\Gamma,l)$.
\item
\label{Dii} $\Del(\Gamma,l)=\Del(\Gamma)$.
\end{enumerate}
\end{lemma}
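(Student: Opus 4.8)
The plan is to prove the two parts of Lemma~\ref{Del-lemma} separately, with part (\ref{Dii}) being essentially a reinterpretation of what the Delaunay decomposition depends on, and part (\ref{Di}) being a routine application of classical reduction theory of quadratic forms.

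For part (\ref{Dii}), I would observe that by Remark~\ref{Del}, the Delaunay decomposition can be described combinatorially as the restriction to $H_1(\Gamma,\R)$ of the standard cube decomposition of $C_1(\Gamma,\R)$ cut out by the hyperplanes $e^*=n$ for $e\in E(\Gamma)$ and $n\in\Z$. The key point is that this description uses only the lattice inclusion $H_1(\Gamma,\Z)\subset C_1(\Gamma,\Z)$ and the basis $E(\Gamma)$; the generating hyperplanes $e^*=n$ do \emph{not} depend on the length function $l$ at all, since $e^*$ is defined combinatorially in (\ref{e*}). Therefore the decomposition of $H_1(\Gamma,\R)$ into cells is literally the same set of polytopes whether one uses $(,)$ or $(,)_l$. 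The only subtlety is that Definition~\ref{Deldef} phrases Delaunay cells metrically (via nearest lattice points), so I would need to invoke the equivalence in Remark~\ref{Del} (citing \cite[Prop. 5.5]{OS}) to pass to the combinatorial description, after which the equality $\Del(\Gamma,l)=\Del(\Gamma)$ is immediate because neither side depends on $l$.

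For part (\ref{Di}), the statement is that the polarized torus $\Alb(\Gamma,l)=(H_1(\Gamma,\R)/H_1(\Gamma,\Z);(,)_l)$ determines its own Delaunay decomposition. This is a general fact about flat tori / positive definite quadratic forms: the Delaunay decomposition attached to a lattice with a quadratic form is, by construction (Definition~\ref{Deldef}), defined purely in terms of the Euclidean geometry given by $(,)_l$ together with the lattice $H_1(\Gamma,\Z)$. Since $\Alb(\Gamma,l)$ is by definition exactly the data $(H_1(\Gamma,\Z)\subset H_1(\Gamma,\R),(,)_l)$ up to isometry, and the notion of an $\alpha$-nearest lattice point is isometry-invariant, an isomorphism of polarized tori carries Delaunay cells to Delaunay cells. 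I would cite \cite[Sec.~2]{KS} here, paralleling the analogous step in the proof of Theorem~\ref{main-thm} (necessity), where the same fact was used for the unweighted scalar product.

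The main obstacle, such as it is, lies in part (\ref{Dii}): one must be careful that the metric-free combinatorial description of Remark~\ref{Del} genuinely coincides with the metric definition of Delaunay cells \emph{for both} the standard scalar product and the weighted one $(,)_l$. The reference \cite[Prop. 5.5]{OS} is stated for the combinatorial (standard cube) situation, so I would want to make explicit that the content of part (\ref{Dii}) is precisely the assertion that the weighted Delaunay decomposition still admits this same cube description—equivalently, that the Voronoi/Delaunay structure here is governed by the integral structure and the functionals $e^*$ rather than by the edge lengths. Once this identification is in hand, everything else is formal, and the two parts together give exactly what is needed to feed into the necessity argument for Theorem~\ref{main-thml}, mirroring the role that $\Del(\Gamma)$ played in the proof of Theorem~\ref{main-thm}.
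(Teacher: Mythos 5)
Your treatment of part (\ref{Di}) is correct and is exactly the paper's argument: the Delaunay decomposition is defined purely from the lattice $H_1(\Gamma,\Z)\subset H_1(\Gamma,\R)$ together with the scalar product $(,)_l$, and this is precisely the data of $\Alb(\Gamma,l)$.

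For part (\ref{Dii}) there is a genuine gap, and you have in fact located it yourself without closing it. Your argument runs: the generating hyperplanes $e^*=n$ do not involve $l$, hence ``the equality $\Del(\Gamma,l)=\Del(\Gamma)$ is immediate because neither side depends on $l$.'' But $\Del(\Gamma,l)$ \emph{does} a priori depend on $l$: it is defined (Definition~\ref{del-metr} via Definition~\ref{Deldef}) by nearest-lattice-point computations in the metric $(,)_l$, and changing a positive definite form on a fixed lattice generally changes the Delaunay decomposition. The statement that $\Del(\Gamma,l)$ admits the same hyperplane description as $\Del(\Gamma)$ --- equivalently, that the decomposition is governed by the integral functionals $e^*$ and not by the weights --- is not a reformulation of Remark~\ref{Del}; it is the entire content of part (\ref{Dii}). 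Remark~\ref{Del} and \cite[Prop.~5.5]{OS} cover only the unweighted form $(,)$. Your closing sentence ``once this identification is in hand, everything else is formal'' concedes the point but supplies no argument for the identification.

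The paper closes this gap by citing a theorem of Mumford, \cite[Thm~18.2]{nam}: for a positive definite form of the shape $\sum_{e}l(e)\,(e^*)^2$ restricted to a lattice on which the $e^*$ take integer values and whose hyperplane arrangement $\{e^*=n\}$ dices the lattice (its $0$-skeleton is the lattice itself, as recalled in the proof of Proposition~\ref{Del-equ}), the Delaunay decomposition is the one cut out by those hyperplanes, for \emph{any} choice of positive weights $l(e)$. That is the precise non-formal input you are missing; either cite it or reprove it, but the lemma does not follow from the $l$-independence of the hyperplanes alone.
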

\begin{proof}
Clearly, $\Del(\Gamma,l)$ is determined by the lattice $H_1(\Gamma,\Z)\subset H_1(\Gamma,\R)$
and the scalar product $(,)_l$, and therefore by $\Alb(\Gamma,l)$. This shows part (\ref{Di}).

Part (\ref{Dii}) follows from a well-known Theorem of Mumford,  \cite[Thm 18.2]{nam}.
\end{proof}

\noindent
{\it Proof of Theorem~\ref{main-thml}:  necessity.}
Suppose   that $\Alb(\Gamma,l)\cong\Alb(\Gamma',l')$.  By Lemma~\ref{Del-lemma},
 $(\Gamma,l)$ and $(\Gamma',l')$ have the same Delaunay decompositions
and $\Del (\Gamma) = \Del (\Gamma ')$.
We can assume that $\Gamma$ and $\Gamma'$ are
$3$-edge connected.
By Proposition~\ref{Del-equ}(\ref{Del3}), we have  $\Gamma \equiv_{\rm cyc} \Gamma'$.
Denote
\begin{equation}
\label{epsl}
E( \Gamma) \stackrel{\epsilon}{\la} E(\Gamma') ;\  \  \  e\mapsto e':=\epsilon(e)
\end{equation}
a cyclic bijection.
It remains  to prove that $l(e)= l'(e')$ for every $e\in E( \Gamma)$.
 We will proceed in strict analogy with the proof of the necessity  of Theorem~\ref{main-thm}.

First, note that
  there is an isomorphism, denoted
\begin{equation}
\label{Hisol}
H_1(\Gamma,\Z)\stackrel{\cong}{\la}H_1(\Gamma',\Z);\  \  \  c\mapsto c'
\end{equation}
such that $(c_1,c_2)_l=(c'_1,c'_2)_{l'}$  for all $c_i\in H_1(\Gamma,\Z)$.
Pick $c\in H_1(\Gamma,\Z)$ and write
$c=\sum_{e\in E( \Gamma)} r_e(c)  e $,
with $r_e(c)\in \Z$;
similarly $c'=\sum_{e'\in E( \Gamma')} r_{e'}(c')  e' $
with $r_{e'}(c')\in \Z$. We claim that for every $e\in E(\Gamma)$ and every
$c\in  H_1(\Gamma, \Z)$ we have
\begin{equation}
\label{divrl}r_{e'}(c')=u(e) r_e(c),\  \  \  u(e):=\pm 1.
\end{equation}
To prove the claim,  notice that  $
r_e(c)=n \Leftrightarrow  e^*(c)=n.
$
On the other hand, the isomorphism between   $\Del (\Gamma)$ and    $\Del (\Gamma ')$
  maps the hyperplane of equation $e^*=n$ either to $ e'^*=n$ or to $ e'^*=-n$.
So,  the claim is proved.
Now
  define  $ E_c(\Gamma):=\{e\in E(\Gamma): r_e(c)\neq 0\}.
$ For any  $c\in  H_1(\Gamma, \Z)$  and $e\in  E_c(\Gamma)$  we shall denote
\begin{equation}
\label{rcl}
\lambda(c-e):=
\sum _{f\in  E_c(\Gamma) \smallsetminus \{e\}}l(f).
\end{equation}

We can now prove
 that  the map (\ref{epsl}) preserves the lengths, i.e. that
 $l(e)=l'(e')$ for every $e\in E(\Gamma)$.
By contradiction, suppose there exists an edge $e$ of $ \Gamma$ such that
\begin{equation}
\label{cardSl}
l(e)>l'(e').
\end{equation}
By Lemma \ref{intS}, there exist two cycles $\Delta_1$ and $\Delta_2$ of $\Gamma$ such that
$\{e\}=E(\Delta_1)\cap E(\Delta_2)$. As in
the proof of   Theorem
\ref{main-thm},
consider   $c_1 $ and $c_2 $
in $H_1(\Gamma,\Z)$ associated to the above two cycles (so that $ E_{c_i}(\Gamma)=\Set_{\Delta_i}\Gamma$ for $i=1, 2$):
$$
c_i= e+\sum_{f\in  E_{c_i}(\Gamma)  \smallsetminus \{e\}}\pm f.
$$
The sign before $f$ will  play no role,   hence we are free to ignore it.
Suppose   that $u(e)=1$ (the case $u(e)=-1$ is treated   similarly).
By (\ref{divrl}) we have$$
c_i'= e'+\sum_{f\in  E_{c_i}(\Gamma)  \smallsetminus \{e\}}\pm f'.
$$
Therefore, as $||c_i||^2:=(c_i,c_i)_l=(c_i',c_i')_{l'}=: ||c_i'||^2$,
using notation (\ref{rcl}) we have
$$
||c_i||^2=l(e)+\lambda(c_i-e) =||c_i'||^2=l(e')+\lambda(c_i'-e').$$
 By (\ref{cardSl}) we get
 \begin{equation}
\label{cardTl}
\lambda(c_i-e)<  \lambda(c_i'-e').
\end{equation}
Now consider $c_1-c_2\in H_1(\Gamma, \Z)$. We have
$$
c_1-c_2= \sum_{f\in  E_{c_1}(\Gamma)  \smallsetminus \{e\}}\pm f-
\sum_{g\in E_{c_2}(\Gamma)  \smallsetminus \{e\}}\pm g.
$$
Since $ E_{c_1}(\Gamma)\cap E_{ {c_2}}(\Gamma) =\{e\}$,   we have
$||c_1-c_2||^2=\lambda(c_1-e)+\lambda(c_2-e).$
Arguing similarly for $c_1'-c_2'$, we get
$||c_1'-c_2'||^2=\lambda(c_1'-e')+\lambda(c_2'-e').$
Hence, by (\ref{cardTl})
$$
||c_1'-c_2'||^2=\lambda(c_1'-e')+\lambda(c_2'-e')>\lambda(c_1-e)+\lambda(c_2-e)=
||c_1-c_2||^2,
$$
contradicting the fact that  (\ref{Hisol}) preserves the scalar products.
$\qed$

\section{Further characterizations of graphs}
\label{pos}
The Torelli theorems proved in the previous sections are based on the notion of 3-edge connected class,
$[\Gamma^3]_{\rm{cyc}}$, of a graph $\Gamma$.
The aim of this section,
whose main result is Theorem~\ref{final-thm},
is to give some other characterizations of $[\Gamma^3]_{\rm{cyc}}$.

\subsection{The poset $\SP_{\Gamma}$}
\label{op}

\begin{defi} Let $\Gamma$ be a graph.
The poset  $\SP_{\Gamma}$ is the set
of all the subsets $S\subset E(\Gamma)$ such that the
subgraph
$\Gamma\smallsetminus S$ is free from separating
edges, endowed with the following partial order:
$$ S\geq T \Longleftrightarrow
S\subseteq T .$$
\end{defi}
\begin{remark}
\label{SPsep}
It is clear that for every $S\in \SP_{\Gamma}$ we have $\Esep \subset S$.
Therefore any map $\sigma:\Gamma \to \ov{\Gamma}$ contracting some separating edges
of $\Gamma$
induces a bijection of posets $\SP_{\Gamma} \la \SP_{\overline{\Gamma}}$.
\end{remark}
We will use  some notions and facts  from graph theory.
\begin{defi}\cite[Sect. 2.3]{Oxl}
 \label{coma}
The {\it cographic matroid} $M^*(\Gamma)$ of $\Gamma$ is the matroid of collections of linearly independent vectors
among the collections of vectors $\{e^*\: : \: e\in E(\Gamma)\}$ of
$H_1(\Gamma, \R)^*$.
\end{defi}
\begin{remark}
\label{mator}
It is well known that  the cographic matroid $M(\Gamma)$ is independent of the choice of
the orientation of $\Gamma$ used to define $H_1(\Gamma,\Z)\subset C_1(\Gamma,\Z)$.
\end{remark}

\begin{thm}\cite[Sect. 5.3]{Oxl}\label{matroid}
$M^*(\Gamma)\cong M^*(\Gamma')$ if and only if $\Gamma\equiv_{\rm cyc} \Gamma'$.
\end{thm}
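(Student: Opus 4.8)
The plan is to reduce the statement to the standard matroid duality between the graphic (cycle) matroid and the cographic matroid, and then to invoke that matroid duality commutes with isomorphisms. I would organize the argument in three steps.

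First I would observe that cyclic equivalence is, essentially by definition, the same as an isomorphism of cycle matroids. Recall that the cycle matroid $M(\Gamma)$ has ground set $E(\Gamma)$, and its circuits (minimal dependent sets) are exactly the edge sets of the cycles of $\Gamma$ in the sense of \ref{notgraph}, namely the connected, separating-edge-free subgraphs $\Delta$ with $b_1(\Delta)=1$. Since a matroid is determined by its collection of circuits, a bijection $\epsilon\colon E(\Gamma)\to E(\Gamma')$ is cyclic (Definition~\ref{cyc-equiv}) if and only if it carries the circuits of $M(\Gamma)$ bijectively onto those of $M(\Gamma')$, i.e. if and only if $\epsilon$ is an isomorphism $M(\Gamma)\cong M(\Gamma')$. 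Thus $\Gamma\equiv_{\rm cyc}\Gamma'$ if and only if $M(\Gamma)\cong M(\Gamma')$.

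The second and most delicate step is to identify the matroid $M^*(\Gamma)$ of Definition~\ref{coma} with the matroid dual of $M(\Gamma)$. I would use the representable form of matroid duality. The cycle matroid $M(\Gamma)$ is represented over $\R$ by the boundary map $\partial\colon C_1(\Gamma,\R)\to C_0(\Gamma,\R)$ sending the coordinate vector $e$ to $t(e)-s(e)$: a set of edges is independent in $M(\Gamma)$ precisely when the vectors $\partial(e)$ are linearly independent, i.e. when the edges form a forest. The standard duality for represented matroids states that if $M$ is represented as the image of the coordinate basis under a linear map $\R^{E}\to W$, then $M^*$ is represented by the images of the dual coordinate functionals under the restriction $(\R^{E})^*\to(\ker\partial)^*$. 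Here $\ker\partial=H_1(\Gamma,\R)$, and the restriction of the coordinate functional $e^*$ to $H_1(\Gamma,\R)$ is exactly the vector attached to $e$ in Definition~\ref{coma}. Hence $M^*(\Gamma)$ is the dual matroid $M(\Gamma)^*$. Concretely, one checks that the circuits of $M^*(\Gamma)$, i.e. the minimal linearly dependent subsets of $\{e^*|_{H_1(\Gamma,\R)}\}$, are exactly the bonds of $\Gamma$, using that a relation $\sum a_e\, e^*=0$ on $H_1(\Gamma,\R)$ says precisely that $\sum a_e\, e$ is orthogonal to the cycle space and so lies in the cut space.

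Finally I would apply the elementary fact that a ground-set bijection is a matroid isomorphism if and only if it is an isomorphism of the dual matroids, since the dual is defined purely in terms of bases and rank, which any isomorphism preserves. Combining the three steps yields
$$
\Gamma\equiv_{\rm cyc}\Gamma' \iff M(\Gamma)\cong M(\Gamma') \iff M(\Gamma)^*\cong M(\Gamma')^* \iff M^*(\Gamma)\cong M^*(\Gamma'),
$$
which is the assertion. The main obstacle lies in the bookkeeping of the second step: one must exhibit the duality relative to the same ground set $E(\Gamma)$, so that the resulting matroid isomorphism is induced by a genuine cyclic bijection of edges rather than an abstract isomorphism, and the identification of the minimal dependent sets of $\{e^*\}$ with the bonds via cycle-space/cut-space orthogonality is what makes this precise.
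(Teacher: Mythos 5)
Your proof is correct. Note, however, that the paper does not actually prove Theorem~\ref{matroid}: it is stated as a citation to \cite[Sect.~5.3]{Oxl}, whose main content is Whitney's $2$-isomorphism theorem ($M(\Gamma)\cong M(\Gamma')$ iff the graphs are related by the moves of Theorem~\ref{cycequ-moves}). What your argument makes transparent is that, with the paper's Definition~\ref{cyc-equiv} of cyclic equivalence as a cycle-preserving edge bijection, the deep part of Whitney's theorem is not needed here at all: since the cycles of $\Gamma$ in the sense of~\ref{notgraph} are exactly the circuits of the graphic matroid, cyclic equivalence \emph{is} graphic matroid isomorphism by the circuit cryptomorphism, and the statement then reduces to your step two (the functionals $e^*|_{H_1(\Gamma,\R)}$ represent the dual of the matroid represented by $\partial$, via the orthogonality of cycle space and cut space) together with the trivial compatibility of duality with ground-set isomorphisms. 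The hard Whitney content is only consumed elsewhere, in Theorem~\ref{cycequ-moves} and Remark~\ref{3v}. Your identification of $M^*(\Gamma)$ with $M(\Gamma)^*$ is the one genuine verification required, and your treatment of it -- both the representable-duality form and the circuits-equal-bonds check -- is sound; the only care needed, which you correctly flag, is that the duality is carried out over the fixed ground set $E(\Gamma)$ so that the resulting isomorphism is induced by an actual edge bijection.
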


We are going to show that the poset $\SP_{\Gamma}$ is determined by   $M^*(\Gamma)$.
Before doing that we recall the notion of a flat  of the matroid  $M^*(\Gamma)$
(see for example \cite[Sec. 1.7]{Oxl}).
First, for any $S=\{ e_{S,1},\ldots, e_{S,\# S} \}\subset E(\Gamma)$
we denote by
$$
\langle S^*\rangle={\rm span}( e_{S,1}^*,\ldots, e_{S,\# S}^*)\subset H_1(\Gamma,\Z)^*.
$$
We say that $S$ is a  \emph{flat} of $M^*(\Gamma)$ if for every $e\in E(\Gamma)\smallsetminus S$
we have
$$\dim\langle S^*\rangle< \dim {\rm span}(S^*, e^*)=\dim {\rm span}( e_{S,1}^*,\ldots, e_{S,\# S}^*,e^*).
$$

\begin{lemma}\label{flats}
$\SP_{\Gamma}$ is the set of flats of the matroid $M^*(\Gamma)$.
\end{lemma}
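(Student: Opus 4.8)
The plan is to match the two descriptions of a subset $S\subset E(\Gamma)$ edge by edge, translating the combinatorial condition defining $\SP_{\Gamma}$ (no separating edges in $\Gamma\smallsetminus S$) into the linear-algebraic closure condition defining a flat, via the functionals $e^*$. The first step is to locate the homology of the subgraph $\Gamma\smallsetminus S$ inside $H_1(\Gamma,\R)$. Since $\Gamma\smallsetminus S$ is the spanning subgraph with edge set $E(\Gamma)\smallsetminus S$, a homology class $c\in H_1(\Gamma,\R)$ lies in $H_1(\Gamma\smallsetminus S,\R)$ exactly when all its $S$-coordinates vanish; that is,
$$
H_1(\Gamma\smallsetminus S,\R)=\{c\in H_1(\Gamma,\R)\: : \: e^*(c)=0 \text{ for all } e\in S\},
$$
which is precisely the annihilator in $H_1(\Gamma,\R)$ of the subspace $\langle S^*\rangle\subset H_1(\Gamma,\R)^*$.

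Next I would apply Remark~\ref{e0sep}, not to $\Gamma$ but to the graph $\Gamma\smallsetminus S$ (whose vertex set equals $V(\Gamma)$, so the remark applies verbatim): an edge $e\in E(\Gamma)\smallsetminus S$ is a separating edge of $\Gamma\smallsetminus S$ if and only if the restriction of $e^*$ to $H_1(\Gamma\smallsetminus S,\R)$ is zero. By the first step this says that $e^*$ vanishes on the annihilator of $\langle S^*\rangle$, and by the double-annihilator identity ${\rm Ann}({\rm Ann}(\langle S^*\rangle))=\langle S^*\rangle$ for subspaces of the dual of a finite-dimensional space, this is equivalent to $e^*\in\langle S^*\rangle$. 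In turn $e^*\in\langle S^*\rangle$ is exactly the failure of the strict inequality $\dim\langle S^*\rangle<\dim{\rm span}(S^*,e^*)$, i.e. the negation of the flat condition tested at the single edge $e$.

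Combining the two steps yields the Lemma: $\Gamma\smallsetminus S$ is free from separating edges if and only if no $e\in E(\Gamma)\smallsetminus S$ satisfies $e^*\in\langle S^*\rangle$, which is precisely the condition that $S$ be a flat of $M^*(\Gamma)$; hence $S\in\SP_{\Gamma}$ if and only if $S$ is a flat, and the two sets coincide. The only genuinely delicate point is the duality step—viewing each $e^*$ as a functional on $H_1(\Gamma,\R)$ and invoking the double-annihilator identity—everything else being a direct bookkeeping of coordinates. As a consistency check, the separating edges of $\Gamma$ itself are exactly the loops of $M^*(\Gamma)$ (those $e$ with $e^*=0$), which lie in $\langle S^*\rangle$ for every $S$ and hence in every flat, matching Remark~\ref{SPsep} that $\Esep\subset S$ for all $S\in\SP_{\Gamma}$.
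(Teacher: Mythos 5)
Your proof is correct and follows essentially the same route as the paper's: both reduce the flat condition to the vanishing of $e^*$ on $H_1(\Gamma\smallsetminus S,\R)$ and then invoke the characterization of separating edges as those $e$ with $e^*$ restricting to zero. The only cosmetic difference is that where the paper asserts the kernel of the restriction map $H_1(\Gamma,\R)^*\twoheadrightarrow H_1(\Gamma\smallsetminus S,\R)^*$ equals $\langle S^*\rangle$, you derive it from the identification $H_1(\Gamma\smallsetminus S,\R)={\rm Ann}(\langle S^*\rangle)$ together with the double-annihilator identity, which actually supplies a detail the paper leaves implicit.
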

\begin{proof}
Given any subset $T\subset E(\Gamma)$,   its closure ${\rm cl}(T)$ is defined as the subset
of $E(\Gamma)$ formed by all the   $e\in E(\Gamma)$ such that
$e^*\in {\rm span}_{f\in T}( f^*)$. It is clear that
$T\subset E(\Gamma)$ is a flat if and only if $T={\rm cl}(T)$.

We have the following commutative
diagram
\begin{equation*}\xymatrix{
H_1(\Gamma\smallsetminus T,\Z)\ar@{^{(}->}[r] \ar@{^{(}->}[d]& C_1(\Gamma\smallsetminus T,\Z)
\ar@{^{(}->}[d]\\
H_1(\Gamma,\Z) \ar@{^{(}->}[r] & C_1(\Gamma,\Z).
}\end{equation*}
The left vertical injective map induces a surjective map $H_1(\Gamma,\Z)^*
\twoheadrightarrow H_1(\Gamma\smallsetminus T,\Z)^*$ whose kernel is equal to
${\rm span}_{f\in T}(f^*)$. Therefore $e\in {\rm cl}(T)$ if and only
the  image, $[e^*]$, of $e^*$ under the above surjection is zero. If $e\not\in T$ then $[e^*]=0$
if and only if $e$ does not belong to any cycle of $\Gamma\smallsetminus T$, if and only if
$e$ is a separating edge of $\Gamma\smallsetminus T$. This shows that $T={\rm cl}(T)$ if and only
$\Gamma\smallsetminus T$ does not have separating edges, in other words
$T$ is a flat  if and only
 $T\in \SP_{\Gamma}$.
\end{proof}

\begin{remark}\label{arrang}
It is easy to see, from the above definitions, that the poset of flats of $M^*(\Gamma)$
is isomorphic to the poset of intersections of the arrangement of hyperplanes
$\{e^*=0\}_{e\in E(\Gamma)\smallsetminus E(\Gamma)_{\rm sep}}.$
\end{remark}

Recall that a poset $(P,\leq)$ is called {\it graded} if it is has a monotone  function
$\rho:P\to \N$, called the {\it rank function},  such that
if $x$ covers $y$ (i.e. $y\lneq  x$ and there does
not exist a $z$ such that $y\lneq z\lneq x$)
then $\rho(x)=\rho(y)+1$.
If our poset has a minimum element $\un{0}$, we say that it is bounded from below. If this is the case
 $(P,\leq)$ is graded if and only if for every element $x\in P$ all the maximal
chains from $\un{0}$ to $x$ have the same length. We can define a rank function
$\rho:P\to \N$ by setting $\rho(x)$  equal to the length of any chain
from $\un{0}$ to $x$. This is the unique rank function on $(P,\leq)$ such that
$\rho(\un{0})=0$ and we call it the {\it normalized rank function}.

\begin{cor}
\label{rank}
The poset $\SP_{\Gamma}$ is a graded poset with minimum element equal to $E(\Gamma)$
and   normalized rank function   given by
$S\mapsto b_1(\Gamma\smallsetminus S)$.
\end{cor}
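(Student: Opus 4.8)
The plan is to identify $\SP_\Gamma$ with the order dual of the lattice of flats of the cographic matroid $M^*(\Gamma)$ and to read the rank function off the matroid structure. By Lemma~\ref{flats} the elements of $\SP_\Gamma$ are exactly the flats of $M^*(\Gamma)$, and the order $S\geq T\Leftrightarrow S\subseteq T$ is reverse inclusion. First I would dispose of the minimum element: the full edge set $E(\Gamma)$ lies in $\SP_\Gamma$, since $\Gamma\smallsetminus E(\Gamma)$ is edgeless and hence has no separating edge; and every $S\in\SP_\Gamma$ satisfies $S\subseteq E(\Gamma)$, i.e. $S\geq E(\Gamma)$, so $E(\Gamma)$ is indeed the minimum of $(\SP_\Gamma,\geq)$.

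The numerical input I would isolate is the identity
\[
b_1(\Gamma\smallsetminus S)=b_1(\Gamma)-\dim\langle S^*\rangle\qquad\text{for every }S\in\SP_\Gamma ,
\]
which comes directly from the proof of Lemma~\ref{flats}: the inclusion $H_1(\Gamma\smallsetminus S,\Z)\hookrightarrow H_1(\Gamma,\Z)$ dualizes to a surjection $H_1(\Gamma,\Z)^*\twoheadrightarrow H_1(\Gamma\smallsetminus S,\Z)^*$ with kernel $\langle S^*\rangle$, whence the dimension count (using $\dim H_1(\Gamma,\R)^*=b_1(\Gamma)$). Since $\dim\langle S^*\rangle$ is precisely the matroid rank $r(S)$ of the flat $S$, the proposed function $\rho(S):=b_1(\Gamma\smallsetminus S)$ is the \emph{corank} $b_1(\Gamma)-r(S)$; in particular $\rho(E(\Gamma))=0$, because the $e^*$ span $H_1(\Gamma,\R)^*$, consistent with $E(\Gamma)$ being the bottom and $\rho$ being normalized.

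It then remains to prove that $(\SP_\Gamma,\geq)$ is graded with $\rho$ as normalized rank function. As the poset is bounded below by $E(\Gamma)$, it suffices to check that $\rho$ is monotone and jumps by exactly $1$ across covers. Monotonicity is immediate: if $T\subseteq S$ (so $S\leq T$ in $\SP_\Gamma$) then $\Gamma\smallsetminus S\subseteq\Gamma\smallsetminus T$, hence $\rho(S)=b_1(\Gamma\smallsetminus S)\leq b_1(\Gamma\smallsetminus T)=\rho(T)$. A covering pair in $\SP_\Gamma$ is a pair of flats $F\subsetneq G$ with no flat strictly in between, and I would show such a pair has $r(G)-r(F)=1$ by the standard closure argument: if $r(G)-r(F)\geq 2$, pick $e\in G\smallsetminus F$ and set $H:={\rm cl}(F\cup\{e\})$; then $H$ is a flat with $F\subsetneq H\subseteq G$ and $r(H)=r(F)+1<r(G)$, so $F\subsetneq H\subsetneq G$, contradicting the covering hypothesis. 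Translating through the corank identity, each cover raises $\rho$ by exactly $1$; since $\rho(E(\Gamma))=0$, every maximal chain from $E(\Gamma)$ to $S$ then has length $\rho(S)$, so the poset is graded with normalized rank function $S\mapsto b_1(\Gamma\smallsetminus S)$.

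I expect the covering step, i.e. the semimodularity of the lattice of flats, to be the only real obstacle, the subtlety being that $\SP_\Gamma$ is the order dual of this lattice, so one must verify that the corank $b_1(\Gamma)-r(S)$---not the rank $r(S)$---is the normalized rank function and that ``covering upward'' in $\SP_\Gamma$ means passing to a smaller flat in $M^*(\Gamma)$. Alternatively, rather than reproving gradedness one may simply invoke that the lattice of flats of a matroid is a geometric, hence graded, lattice (see \cite[Sect.~1.7]{Oxl}) and dualize, but I find the direct closure argument above both short and self-contained.
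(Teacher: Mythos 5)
Your proof is correct and follows essentially the same route as the paper: both rest on Lemma~\ref{flats} identifying $\SP_{\Gamma}$ with the (reverse-ordered) lattice of flats of $M^*(\Gamma)$, the paper then citing \cite[Thm. 1.7.5]{Oxl} for gradedness and identifying the chain length from $E(\Gamma)$ to $S$ with $b_1(\Gamma\smallsetminus S)$ in one line. You merely unpack those two black boxes --- the closure/semimodularity argument for gradedness and the corank identity $b_1(\Gamma\smallsetminus S)=b_1(\Gamma)-\dim\langle S^*\rangle$ --- which makes your version more self-contained but not different in substance.
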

\begin{proof}
(It is well-known, see \cite[Thm. 1.7.5]{Oxl}, that the poset of flats of a matroid is a geometric lattice; hence in particular a graded poset.)
The minimum element is clearly $E(\Gamma)$ and the length of a chain
in $\SP_{\Gamma}$ from $E(\Gamma)$ to $S$ is exactly equal to the number of independent
cycles in $\Gamma\smallsetminus S$, that is to $b_1(\Gamma\smallsetminus S)$.
\end{proof}
\begin{remark}
\label{codim}
We like to think of the number $b_1(\Gamma\smallsetminus S)$ as the {\it codimension} of the
set $S\in \SP_{\Gamma}$.
If $\Esep=\emptyset$
(which is a harmless assumption, by remark~\ref{SPsep}),
then $\Set \Gamma \subset \SP_{\Gamma}$, and we have that
 $S$ has codimension 1 if and only if $S$ is a C1-set. (cf. \ref{C1}).
\end{remark}

\begin{lemma}\label{supp-equiv}
Let $\Gamma$ and $\Gamma'$ two
graphs. For any choice of $\Gamma^3$ and $\Gamma'^3$ we have:
\begin{enumerate}
\item[(i)] $\SP_{\Gamma}\cong \SP_{\Gamma^3}$  (as posets).
\item[(ii)] $\SP_{\Gamma}\cong \SP_{\Gamma'}$ if and only if
$ \Gamma^3 \equiv_{\rm cyc} \Gamma'^3$.
\end{enumerate}
\end{lemma}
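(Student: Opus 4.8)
The plan is to work entirely through the matroid reformulation supplied by Lemma~\ref{flats}, which identifies $\SP_{\Gamma}$ with the poset of flats of the cographic matroid $M^*(\Gamma)$ ordered by reverse inclusion. Part (i) then becomes the assertion that passing to a $3$-edge connectivization does not change this poset of flats, and part (ii) is handled by combining part (i) with Theorem~\ref{matroid}.

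For part (i) I would first invoke Remark~\ref{SPsep} to contract all separating edges, reducing to the case $\Gamma=\Gamma^2$; then $\Gamma^3$ is obtained from $\Gamma$ by repeatedly contracting one edge of each separating pair. The key tool is the isomorphism $\tilde\iota\colon H_1(\Gamma^3,\R)\stackrel{\cong}{\la}H_1(\Gamma,\R)$ of diagram~(\ref{diag2}). Dualizing it, a short computation should give that the pullback of $e^*$ is $e_S^*$, where $S\in\Set \Gamma$ is the C1-set containing $e$ and $e_S=\psi(S)$. Hence $\tilde\iota$ carries the arrangement $\{e^*=0\}_{e\in E(\Gamma)}$ onto $\{e_S^*=0\}_{S\in\Set \Gamma}$. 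By Corollary~\ref{cH} two edges define the same hyperplane exactly when they lie in a common C1-set, and distinct C1-sets give distinct hyperplanes, so the two arrangements have the same underlying set of distinct hyperplanes. Since by Remark~\ref{arrang} each $\SP$ is the intersection poset of the corresponding arrangement, and this poset depends only on the distinct hyperplanes, I would conclude $\SP_{\Gamma}\cong\SP_{\Gamma^3}$.

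The ``if'' direction of part (ii) is then a chain of isomorphisms: if $\Gamma^3\equiv_{\rm cyc}\Gamma'^3$ then $M^*(\Gamma^3)\cong M^*(\Gamma'^3)$ by Theorem~\ref{matroid}, isomorphic matroids have isomorphic posets of flats, so $\SP_{\Gamma^3}\cong\SP_{\Gamma'^3}$ by Lemma~\ref{flats}, and part (i) yields $\SP_{\Gamma}\cong\SP_{\Gamma^3}\cong\SP_{\Gamma'^3}\cong\SP_{\Gamma'}$.

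The substance lies in the ``only if'' direction, and this is the step I expect to be the main obstacle. Assuming $\SP_{\Gamma}\cong\SP_{\Gamma'}$, part (i) reduces this to an isomorphism $\SP_{\Gamma^3}\cong\SP_{\Gamma'^3}$ of the lattices of flats of $M^*(\Gamma^3)$ and $M^*(\Gamma'^3)$. In general a matroid cannot be recovered from its lattice of flats, so the point I would stress is that here both matroids are \emph{simple}: since $\Gamma^3$ and $\Gamma'^3$ have no separating edges there are no loops, and by Corollary~\ref{cor3} each C1-set is a single edge, whence by Corollary~\ref{cH} there are no parallel pairs. A simple matroid is determined up to isomorphism by its geometric lattice of flats (a standard fact, see \cite[Sec. 1.7]{Oxl}), so the poset isomorphism upgrades to $M^*(\Gamma^3)\cong M^*(\Gamma'^3)$, and Theorem~\ref{matroid} then gives $\Gamma^3\equiv_{\rm cyc}\Gamma'^3$.
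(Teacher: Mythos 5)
Your proposal is correct and follows essentially the same route as the paper: both arguments rest on Lemma~\ref{flats}, on Corollary~\ref{cH} (which identifies the zero elements of the cographic matroid with separating edges and the parallel classes with C1-sets, i.e.\ shows that the simplification of $M^*(\Gamma)$ is $M^*(\Gamma^3)$), on the standard fact that a simple matroid is determined by its geometric lattice of flats, and finally on Theorem~\ref{matroid}. The only difference is cosmetic: you phrase part (i) via the hyperplane-arrangement picture of Remark~\ref{arrang} and the explicit isomorphism $\tilde{\iota}$ of diagram~(\ref{diag2}), while the paper states the same content directly as $\widetilde{M^*(\Gamma)}\cong M^*(\Gamma^3)$.
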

\begin{proof}
It is well known that the
poset of flats of a matroid $M$ depends on, and completely
determines, any {\it simple} matroid $\widetilde{M}$ (see below)
associated to $M$ (see \cite[Sec. 1.7]{Oxl}).
Therefore, using Theorem \ref{matroid}, we will be done if we show that
$\widetilde{M^*(\Gamma)}=M^*(\Gamma^3)$
for any choice of $\Gamma^3$ of $\Gamma$.
Since the cographic matroid does not depend on the choice of the orientation
(cf. Remark~\ref{mator}),
we can fix an orientation on $\Gamma$ inducing a totally cyclic orientation
on $\Gamma \smallsetminus \Esep$, and we let $\Gamma^3$
have the orientation induced by that of $\Gamma$.

Recall (see loc. cit.) that a simple matroid $\widetilde{M^*(\Gamma)}$
is obtained from $M^*(\Gamma)$ by deleting the zero vectors
and, for each parallel (i.e. proportional) class of vectors, deleting all but one of the vectors.
We know $e^*\in H_1(\Gamma,\R)^*$ is zero if and only
$e\in E(\Gamma)_{\rm sep}$ (see \ref{e0sep}).
On the other hand,
Corollary \ref{cH}(\ref{cH2})  yields that $e_1^*$ and $e_2^*$ are proportional
if and only if  they belong to the same C1-set,
if and only if, by Lemma~\ref{C1lm}(\ref{C14}),  $(\{e_1,e_2\})$ is separating pair of edges.
Therefore the edges deleted to pass from
 $M^*(\Gamma)$ to $\widetilde{M^*(\Gamma)}$ correspond  exactly to the edges contracted  to construct $\Gamma^3$ from $\Gamma$, and hence we get that $\widetilde{M^*(\Gamma)}\cong M^*(\Gamma^3)$.
\end{proof}

\subsection{The posets  $\OP_{\Gamma}$ and ${\ov{\OP_{\Gamma}}}$}
 We defined totally cyclic orientations in Definition~\ref{tot}. Now we introduce a partial ordering among them.
\begin{defi}
\label{totdef}
The poset $\OP_{\Gamma}$ of \emph{totally cyclic orientations} of $\Gamma$ is the set of pairs
$(S, \phi_S)$ where $S\in \SP_{\Gamma}$ and
$\phi_S$
is a totally cyclic orientation of
$\Gamma\smallsetminus S$, endowed with the following partial order
$$(S, \phi_S)\geq (T,\phi_T) \Leftrightarrow   S\subset  T
\text{ and } \phi_T=(\phi_S)_{|E(\Gamma\smallsetminus  T)}.
$$
We call $S$ the {\it support} of the orientation $\phi_S$.
\end{defi}
We have a natural map
$$\begin{aligned}
 \supp:\OP_{\Gamma} & \rightarrow \SP_{\Gamma}\\
(S, \phi_S)& \mapsto S
\end{aligned}
$$
which is order-preserving by definition and  surjective because
of Lemma \ref{chso}(\ref{c2}).

We say that a map $\pi:(P,\leq)\to (Q,\leq)$ between two posets $(P,\leq)$ and $(Q,\leq)$
is a {\it quotient} if and only if for every $x,y\in Q$ we have that
$$x\leq y \Leftrightarrow   \text{ there exist } \widetilde{x}\in \pi^{-1}(x)
\text{ and } \widetilde{y}\in \pi^{-1}(y) \text{ such that } \widetilde{x}\leq
\widetilde{y}.
$$
In particular $\pi$ is monotone and surjective. Observe also that if
$\pi:(P,\leq)\to (Q,\leq)$ is a quotient, then $(P,\leq)$ is graded if and only if
$(Q,\leq)$ is graded, and in this case we can choose two rank functions $\rho_P$ on $P$
and $\rho_Q$ on $Q$ such that $\rho_Q(\pi(x))=\rho_P(x)$.

We introduce now the outdegree function.

\begin{defi}\label{outdeg}
The outdegree function $\md^+$ is the map
$$\begin{aligned}
\md^+:\OP_{\Gamma}& \longrightarrow \N^{V(\Gamma)}\\
(S, \phi_S) & \mapsto \{d^+(S, \phi_S)_v\}_{v\in V(\Gamma)},
\end{aligned}$$
where $d^+(S, \phi_S)_v$ is the number of edges of $\Gamma\smallsetminus S$ that are
going out of the vertex $v$ according to the orientation $\phi_S$.
\end{defi}

Note that $\md^+$ is monotone with respect to the component-by-component
partial order on $\N^{V(\Gamma)}$.
Moreover
$$
\sum_{v\in V(\Gamma)} d^+(S, \phi_S)_v=\#(E(\Gamma\smallsetminus S)).
$$

This definition enables us to introduce an equivalence relation $\sim$ on $\OP_{\Gamma}$.

\begin{defi}\label{equiv-or}
We say that two elements $(S, \phi_S)$ and $(S', \phi_{S'})$ of $\OP_{\Gamma}$
are equivalent, and we write that $(S, \phi_S)\sim (S', \phi_{S'})$,
if $S=S'$ and
$\md^+(S, \phi_S)=\md^+(S', \phi_{S'})$.
We denote by  $[(S, \phi_S)]$ the equivalence class of $(S, \phi_S)$.

The  set of equivalence classes will be denoted
${\ov{\OP_{\Gamma}}}:=\OP_{\Gamma}/_\sim$.   On  ${\ov{\OP_{\Gamma}}}$ we define
a poset structure by saying that
$[(S, \phi_S)] \geq [(T,\phi_T)]$ if there exist
$ (S', \phi_{S'})\sim (S, \phi)$ and $(T',\phi_{T'})\sim (T,\phi_T)$ such that
$(S', \phi_{S'})\geq (T', \phi_{T'})$ in $\OP_{\Gamma}$.
\end{defi}

Note that ${\ov{\OP_{\Gamma}}}$ is a quotient of the poset $ \OP_{\Gamma}$
and that the natural map of posets $\supp: \OP_{\Gamma}\to \SP_{\Gamma}$
factors as
$$\xymatrix{
\OP_{\Gamma} \ar@{->>}[rr]^{\rm Supp} \ar@{->>}[dr]& & \SP_{\Gamma}  \\
& {\ov{\OP_{\Gamma}}} \ar@{->>}[ru]&
}$$

The next two lemmas show  that $\OP_{\Gamma}$ and ${\ov{\OP_{\Gamma}}}$ are invariant
under cyclic equivalence and    3-edge connectivization.

\begin{lemma}\label{or-cyclic}
The posets $\OP_{\Gamma}$ and ${\ov{\OP_{\Gamma}}}$ depend only on   $[\Gamma]_{\rm cyc}$.\end{lemma}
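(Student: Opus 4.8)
The plan is to invoke Whitney's Theorem~\ref{cycequ-moves}, which reduces the claim to showing that both $\OP_{\Gamma}$ and $\ov{\OP_{\Gamma}}$ are preserved under the two generating moves, namely vertex gluing and twisting. In either move the edge set $E(\Gamma)$ is unchanged and the cycles are preserved; since separating edges are exactly the edges lying on no cycle, the poset $\SP_{\Gamma}$ of supports is then unchanged as well (this is also guaranteed by Lemma~\ref{supp-equiv}). Throughout I would use the criterion of Lemma~\ref{chso}: an orientation of a graph free of separating edges is totally cyclic if and only if every edge lies on a cyclically oriented cycle.

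First I would treat vertex gluing, writing $\Gamma$ for the graph obtained from $\Gamma_1\coprod\Gamma_2$ by identifying $v_1\in V(\Gamma_1)$ with $v_2\in V(\Gamma_2)$ into a single cut vertex $v$. For $S\in\SP_{\Gamma}$ set $S_i:=S\cap E(\Gamma_i)$; an orientation of $\Gamma\smallsetminus S$ is literally the same datum as an orientation of $(\Gamma_1\smallsetminus S_1)\coprod(\Gamma_2\smallsetminus S_2)$. As no cycle crosses the cut vertex $v$, the cyclically oriented cycles are exactly those of the two pieces, so by Lemma~\ref{chso} the orientation is totally cyclic on $\Gamma\smallsetminus S$ if and only if it is totally cyclic on each piece. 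This yields an isomorphism $\OP_{\Gamma}\cong\OP_{\Gamma_1\coprod\Gamma_2}$ compatible with supports and partial orders. For the quotients (Definition~\ref{equiv-or}), the only discrepancy in the outdegree vectors of Definition~\ref{outdeg} is that $\md^+$ records the single value $d^+(v)=d^+(v_1)+d^+(v_2)$ rather than the pair $d^+(v_1),d^+(v_2)$; but for a fixed support the sum $\sum_{w\in V(\Gamma_i)}d^+(w)$ equals the constant $\#(E(\Gamma_i)\smallsetminus S_i)$, so $d^+(v_i)$ is forced by the remaining outdegrees of $\Gamma_i$. Hence the two outdegree vectors determine one another, the relation $\sim$ agrees on both sides, and $\ov{\OP_{\Gamma}}\cong\ov{\OP_{\Gamma_1\coprod\Gamma_2}}$.

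The main work, and the step I expect to be the real obstacle, is the twisting move, where $\Gamma$ and $\Gamma'$ are built from the same $\Gamma_1,\Gamma_2$ by identifying the separating pair in the two possible ways ($u_1=u_2$, $v_1=v_2$ versus $u_1=v_2$, $v_1=u_2$). I would define a bijection $\OP_{\Gamma}\to\OP_{\Gamma'}$ fixing the support $S$ and sending $\phi_S$ to the orientation $\phi_S'$ that agrees with $\phi_S$ on $E(\Gamma_1)$ and reverses it on $E(\Gamma_2)$. The key point is that this flip carries cyclically oriented cycles to cyclically oriented cycles: a directed cycle inside one piece stays directed (the reversal of a directed cycle is again directed), while a crossing directed cycle traverses $\Gamma_1$ from one identified vertex to the other and returns through $\Gamma_2$, and reversing the $\Gamma_2$-portion is precisely what keeps it directed once the two attaching vertices of $\Gamma_2$ have been swapped. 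By Lemma~\ref{chso}, $\phi_S$ is totally cyclic for $\Gamma\smallsetminus S$ if and only if $\phi_S'$ is totally cyclic for $\Gamma'\smallsetminus S$; since the construction commutes with restriction, it is an isomorphism of posets $\OP_{\Gamma}\cong\OP_{\Gamma'}$.

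It remains to show that this isomorphism descends to the quotients, which is the most delicate bookkeeping: the flip turns outdegrees into indegrees on the interior of $\Gamma_2$, so it does not transport $\md^+$ directly. The plan is to fix two orientations $\phi_S,\psi_S$ of the same support and compare the relations $\phi_S\sim\psi_S$ and $\phi_S'\sim\psi_S'$ head-on. On the interiors of $\Gamma_1$ and $\Gamma_2$ the two relations impose the identical equalities (on $\Gamma_2$, equality of indegrees is equality of outdegrees, the degrees being fixed). At the identified vertices one uses that, for a fixed support, the sum of the outdegrees at the two boundary vertices $u_i,v_i$ of each piece $\Gamma_i$ is a constant, namely $\#(E(\Gamma_i)\smallsetminus S_i)$ minus the fixed interior outdegree sum. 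Substituting this, both the pair of conditions defining $\sim$ in $\Gamma$ and the pair defining $\sim$ in $\Gamma'$ collapse to the single equation $d^+_{\phi}(u_1)+d^+_{\phi}(u_2)=d^+_{\psi}(u_1)+d^+_{\psi}(u_2)$. Thus $\sim$ is transported correctly, the bijection descends to $\ov{\OP_{\Gamma}}\cong\ov{\OP_{\Gamma'}}$, and combining the two moves via Theorem~\ref{cycequ-moves} finishes the proof. The vertex-gluing case and the invariance of $\SP_{\Gamma}$ are comparatively routine; the genuine content lies in the twisting move and, above all, in this final verification of compatibility with $\sim$.
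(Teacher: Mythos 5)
Your proposal is correct and follows essentially the same route as the paper: reduce via Whitney's Theorem~\ref{cycequ-moves} to the two moves, split orientations across the cut vertex for vertex gluing, and for twisting transport $\phi_S$ by reversing the orientation on the $\Gamma_2$-side. The only difference is that you spell out the outdegree bookkeeping (recovering $d^+(v_i)$ from the fixed total $\#E(\Gamma_i\smallsetminus S_i)$, and the collapse of the conditions at the identified vertices to a single equation), which the paper dismisses as ``straightforward to check''; your verification is valid.
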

\begin{proof}
It is enough to show that the posets $\OP_{\Gamma}$ and
${\ov{\OP_{\Gamma}}}$ do not change under the two moves of Theorem
\ref{cycequ-moves}.

Consider first a move of type (1), that is the gluing of two graphs $\Gamma_1$ and
$\Gamma_2$ at two vertices $v_1\in V(\Gamma_1)$ and $v_2\in V(\Gamma_2)$
(see figure \ref{cont-sep}). Call $\Gamma$ the resulting graph and $v\in V(\Gamma)$ the
resulting vertex.
It is clear that $(\SP_{\Gamma},\leq) \cong (\SP_{\Gamma_1\coprod \Gamma_2},\leq)$.
Given an element $S\in \SP_{\Gamma}$, we denote by
$(S_1, S_2)$ the corresponding element of $\SP_{\Gamma_1\coprod \Gamma_2}$.
It is easy to check that any totally cyclic orientation $\phi_S$ of $\Gamma\smallsetminus S$
induces totally cyclic orientations $\phi_{S_1}$ and $\phi_{S_2}$  of $\Gamma_1\smallsetminus S_1$
and $\Gamma_2\smallsetminus S_2$ and conversely. Moreover the outdegree $\md^+(S,\phi_S)$
determines, and is determined by, the two outdegrees $\md^+(S_1, \phi_{S_1})$ and
$\md^+(S_2, \phi_{S_2})$, hence we get the desired conclusion.

Consider now a move of type (2). Let $\Gamma$ be obtained by gluing the two
graphs $\Gamma_1$ and $\Gamma_2$ according to the rule $u_1\leftrightarrow u_2$
and $v_1\leftrightarrow v_2$, and let $\ov{\Gamma}$ be obtained
by gluing $\Gamma_1$ and $\Gamma_2$ according to the rule $u_1\leftrightarrow v_2$
and $v_1\leftrightarrow u_2$ (see figure \ref{twist}).
Note that since $E(\Gamma)=E(\Gamma_1)\cup E(\Gamma_2)$, any element $S\in \SP_{\Gamma}$
determines two subsets $S_1\subset E(\Gamma_1)$  and $S_2\in E(\Gamma_2)$.
These two subsets $S_1$ and $S_2$ determine also a subset $\ov{S}\in E(\ov{\Gamma})$,
which is easily seen to belong to $\SP_{\ov{\Gamma}}$. The association $S\mapsto \ov{S}$
determines an isomorphism $(\SP_{\Gamma},\leq) \cong (\SP_{\ov{\Gamma}},\leq)$.
We now construct, for any
$S\in \SP_{\Gamma}$, a bijection between
the set of all totally cyclic orientations (resp. totally cyclic orientations
up to equivalence) on $\Gamma\smallsetminus S$ and the set of
 totally cyclic orientations (resp. totally cyclic orientations
up to equivalence) on $\Gamma\smallsetminus {\ov S}$.
Any orientation $\phi_S$ on $\Gamma\smallsetminus S$ determines two orientations
$\phi_{S_1}$ and $\phi_{S_2}$ on $\Gamma_1\smallsetminus S_1$ and $\Gamma_2\smallsetminus S_2$,
respectively. We define an orientation $\phi_{\ov{S}}$ of $\ov{\Gamma}\smallsetminus \ov{S}$
by putting together the orientation $\phi_{S_1}$  and the inverse of the orientation
$\phi_{S_2}$, that is the orientation $\phi_{S_2}^{-1}$ obtained by reversing the
direction of all the edges. Using Lemma \ref{chso}, it is easy to check that
if $\phi_S$ is a totally cyclic orientation of $\Gamma\smallsetminus S$ then $\phi_{\ov{S}}$ is a
totally cyclic orientation of $\ov{\Gamma}\smallsetminus \ov{S}$. Moreover it is straightforward
to check that the outdegree function $\md^+(S, \phi_S)$
determines and is completely determined by  $\md^+(\ov{S},\phi_{\ov{S}})$.
Clearly the association $\phi_S\mapsto \phi_{\ov{S}}$ is a bijection since we can
reconstruct $\phi_S$ starting from $\phi_{\ov{S}}$ by reversing the orientation on
$\Gamma_2$. Moreover it is easy to check that the constructed bijections
$\OP_{\Gamma}\cong \OP_{\ov{\Gamma}}$ and
${\ov{\OP_{\Gamma}}} \cong {\ov{\OP_{\ov{\Gamma}}}} $ are compatible with the poset
structure, and thus we are done.
\end{proof}

\begin{lemma}\label{or-conn}
For any choice of  $\Gamma^3$
we have natural isomorphisms of posets:
$ \OP_{\Gamma} \cong \OP_{\Gamma^3} $ and
${\ov{\OP_{\Gamma}}} \cong  {\ov{\OP_{\Gamma^3}}}$.
\end{lemma}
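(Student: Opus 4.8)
The plan is to realize both isomorphisms through the contraction map $\sigma\colon \Gamma\to\Gamma^3$, by decomposing $\sigma$ into the elementary contractions (A) and (B) of Definition~\ref{3-conn} and treating a single such contraction at a time, then composing. Since any two choices of $\Gamma^3$ are cyclically equivalent (Lemma~\ref{3lm}), Lemma~\ref{or-cyclic} shows it is enough to handle one $\Gamma^3$. Throughout I identify $\SP_\Gamma$ with $\SP_{\Gamma^3}$ via the order-isomorphism $S\mapsto\sigma(S)$ of Lemma~\ref{supp-equiv}; for $S\in\SP_\Gamma$ the edge sets of $\Gamma\smallsetminus S$ and of $\Gamma^3\smallsetminus\sigma(S)$ differ precisely by the edges that $\sigma$ contracts. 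Recall also (Remark~\ref{SPsep}) that $\Esep\subset S$ for every $S\in\SP_\Gamma$, and that a flat $S$ contains each C1-set either entirely or not at all.

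First I would produce, for each $S$, a support-compatible bijection between the totally cyclic orientations of $\Gamma\smallsetminus S$ and those of $\Gamma^3\smallsetminus\sigma(S)$. For a type~(B) contraction of an edge $e_1$ of a separating pair $(e_1,e_2)$, write $\Gamma\smallsetminus\{e_1,e_2\}=\Gamma_P\sqcup\Gamma_Q$ (a size-two edge cut, by Lemma~\ref{C1lm}(\ref{C14})), so that $e_1,e_2$ are exactly the edges crossing this cut. When $e_1,e_2$ are present in $\Gamma\smallsetminus S$, Definition~\ref{tot} forces exactly one of them to cross the cut in each direction; hence the orientation of the contracted edge $e_1$ is determined by that of $e_2$, which survives as $\bar e_2$. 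This gives the bijection: injectivity is this coherence, and surjectivity follows because $\sigma$ induces a bijection on cycles, so a basis of cyclically oriented cycles pulls back to one (Lemma~\ref{chso}(\ref{bto})--(\ref{cc})). For type~(A), a separating edge lies in every $S$ and so is never oriented, whence the bijection is the identity on orientations. In all cases one checks compatibility with restriction of support, yielding poset isomorphisms $\OP_\Gamma\cong\OP_{\Gamma^3}$.

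Next I would show these bijections respect the equivalence $\sim$ of Definition~\ref{equiv-or}, so that they descend to $\ov{\OP}$. Let $\pi\colon V(\Gamma)\to V(\Gamma^3)$ be induced by $\sigma$. Because a contracted edge has both endpoints sent to a single vertex, one gets the orientation-independent identity
$$\md^+(\sigma(S),\bar\phi_S)_{\bar v}=\Bigl(\sum_{v\in\pi^{-1}(\bar v)}d^+(S,\phi_S)_v\Bigr)-\kappa_{\bar v},$$
where $\kappa_{\bar v}$ counts the contracted edges lying over $\bar v$. Thus equal outdegrees upstairs give equal outdegrees downstairs. For the converse, at the merged vertex $\bar v$ (the fibre $\{p,q\}$ of the single contracted edge $e_1$, with $p\in\Gamma_P$, $q\in\Gamma_Q$) the cut condition makes $\sum_{v\in\Gamma_P}d^+(S,\phi_S)_v$ equal to the number of present internal edges of $\Gamma_P$ plus the number of cut edges leaving $\Gamma_P$, and the latter is a constant (exactly one if $e_1,e_2$ are present, zero otherwise). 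Since $p$ is the only merged vertex inside $\Gamma_P$ and all other vertices of $\Gamma_P$ are unmerged, their outdegrees are read off directly from $\md^+(\sigma(S),\bar\phi_S)$, so this conservation law determines $d^+(S,\phi_S)_p$, and then $d^+(S,\phi_S)_q$ from the fibre sum. Hence $\sim$ is preserved in both directions; the type~(A) case is the same with $\kappa_{\bar v}=0$ and the conservation taken over an entire connected component of $\Gamma\smallsetminus S$. Composing over all contractions in $\sigma$ gives the natural isomorphism $\ov{\OP_\Gamma}\cong\ov{\OP_{\Gamma^3}}$.

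The main obstacle is precisely the converse in the previous paragraph: summing outdegrees over a merged vertex a priori loses information, and it is only the totally cyclic cut condition — exactly one edge of the separating pair pointing each way — that supplies the conservation law needed to reconstruct the individual outdegrees at the merged vertices. Verifying the poset (rather than merely set-theoretic) compatibility of the orientation bijections with restriction of support is routine by comparison with the analogous checks in Lemma~\ref{or-cyclic}.
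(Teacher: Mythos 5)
Your proposal is correct and follows essentially the same route as the paper's proof: reduce to a single elementary contraction of type (A) or (B), use the fact that the two edges of a separating pair must be oriented antiparallel across the associated two-edge cut to set up the orientation bijection (you describe it as a push-down with forced coherence, the paper as a lift with the $2$-cycle $\Gamma(\{e_1,e_2\})$ cyclically oriented — these are the same map), and then check that the outdegree vectors upstairs and downstairs determine one another. Your explicit conservation-law reconstruction of $d^+(S,\phi_S)_p$ and $d^+(S,\phi_S)_q$ from the downstairs data is a welcome expansion of the step the paper dismisses as ``easy to check,'' but it is not a different argument.
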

\begin{proof}
It is enough to show that the posets $\OP_{\Gamma}$ and
${\ov{\OP_{\Gamma}}}$ do not change under the two moves of Definition
\ref{3-conn}.

Recall that for every $S\in \SP_{\Gamma}$ we have $\Esep\subset S$. Therefore
$\Esep$ does not affect the totally cyclic orientations on $\Gamma\smallsetminus S$,
nor does it affect the outdegree function.
This proves that $\OP_{\Gamma}$ does not change when separating edges of $\Gamma$ get contracted.

Consider now a move of type (B), that is the contraction of an edge $e_1$
belonging to a separating pair $(e_1, e_2)$. We refer to the notations
of figure \ref{con-pair}.
We known that $ \SP_{\Gamma}  \cong  \SP_{\ov{\Gamma}}$, by Lemma
\ref{supp-equiv}. Given an element $S\in \SP_{\Gamma}$, we denote by $\ov{S}$ the
corresponding element in $\SP_{\ov{\Gamma}}$. We now construct, for any
$S\in \SP_{\Gamma}$, a bijection between
the set of all totally cyclic orientations (resp. totally cyclic orientations
up to equivalence) on $\Gamma\smallsetminus S$ and the set of
 totally cyclic orientations (resp. totally cyclic orientations
up to equivalence) on $\Gamma\smallsetminus {\ov S}$. If $\ov{e}\in \ov{S}$
(which happens exactly when $e_1, e_2\in S$), then $\ov{\Gamma}\smallsetminus \ov{S}$ is
cyclically equivalent to $\Gamma\smallsetminus S$ and therefore we conclude by the
previous Lemma. If $\ov{e}\not\in \ov{S}$ (which happens exactly when
$e_1$ and $e_2$ do not belong to $S$), we lift any totally cyclic orientation
$\phi_{\ov{S}}$ of $\ov{\Gamma}\smallsetminus \ov{S}$ to an orientation
$\phi_S$ of $\Gamma\smallsetminus S$ by orienting any edge in
$E(\Gamma\smallsetminus S\cup \{e_1\})$ as the corresponding edge in
$E(\ov{\Gamma}\smallsetminus S)$, and by orienting
$e_1$ so that the cycle $\Gamma(\{e_1,e_2\})$ is cyclically
oriented.  Lemma \ref{chso} implies that $\phi_S$ is
a totally cyclic orientation of $\Gamma\smallsetminus S$ and that any totally cyclic orientation
$\phi_S$ must arise from a totally cyclic orientation of $\phi_{\ov{S}}$ via this
construction. Moreover, it is easy to check that the outdegrees
$d^+(S, \phi_S)$ and $d^+(\ov{S},\phi_{\ov{S}})$ are completely determined
one from another, and this concludes the proof.
\end{proof}

\begin{nota}{\emph{A conjectural geometric description of
$\OP_{\Gamma}$ and $\ov{\OP_{\Gamma}}$}}

We propose a conjectural geometric description of the two posets $\OP_{\Gamma}$ and
$\ov{\OP_{\Gamma}}$. Recall the following definition (see for example
\cite[Pag. 174]{BdlHN}).

\begin{defi}\label{Vordef}
The Voronoi polyhedron of the graph $\Gamma$ is the compact convex
polytope defined by
$$\Vor_{\Gamma}:=\{x\in H_1(\Gamma,\R)\: :\: (x,x)\leq (x-\lambda, x-\lambda) \text{ for
all } \lambda\in H_1(\Gamma,\Z)\}. $$
\end{defi}

We denote with $\Fac(\Vor_{\Gamma})$ the poset of faces of the Voronoi polyhedron
$\Vor_{\Gamma}$, with the order given by the reverse of the natural inclusion
between the faces. It is a graded poset with minimum equal to the interior of
$\Vor_{\Gamma}$ and normalized rank function equal to the codimension of the faces.

From the definition, it follows that $\Vor_{\Gamma}$ is a fundamental domain
for the action of $H_1(\Gamma,\Z)$ on $H_1(\Gamma,\R)$ by translations.
In particular $H_1(\Gamma,\Z)$ acts by translation on the faces of $\Vor_{\Gamma}$.
We denote with $\ov{\Fac(\Vor_{\Gamma})}$ the quotient poset
of $\Fac(\Vor_{\Gamma})$ with respect to the action of $H_1(\Gamma,\Z)$.

\begin{conj}\label{geo-conj}
For a graph $\Gamma$, we have that
\begin{enumerate}[(i)]
\item $\OP_{\Gamma}\cong \Fac(\Vor_{\Gamma})$.
\item $\ov{\OP_{\Gamma}}\cong \ov{\Fac(\Vor_{\Gamma})}$.
\end{enumerate}
\end{conj}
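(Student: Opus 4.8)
The plan is to identify both sides with combinatorial data attached to the Delaunay decomposition $\Del(\Gamma)$, using the classical duality between the Voronoi and Delaunay decompositions together with the cube description of $\Del(\Gamma)$ recorded in Remark~\ref{Del}. First I would reduce to the case $\Esep=\emptyset$, in fact to $\Gamma$ $3$-edge connected. By Lemma~\ref{or-conn} the posets $\OP_{\Gamma}$ and $\ov{\OP_{\Gamma}}$ are unchanged when passing to $\Gamma^3$, while on the geometric side the combinatorial type of $\Vor_{\Gamma}$, together with the induced $H_1(\Gamma,\Z)$-action, is determined by $\Del(\Gamma)$, which depends only on $[\Gamma^3]_{\rm cyc}$ by Proposition~\ref{Del-equ}. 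Hence $\Fac(\Vor_{\Gamma})\cong\Fac(\Vor_{\Gamma^3})$ and $\ov{\Fac(\Vor_{\Gamma})}\cong\ov{\Fac(\Vor_{\Gamma^3})}$, so all four posets reduce compatibly.

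Next I would set up the Voronoi--Delaunay duality: the faces of $\Vor_{\Gamma}=\Vor(0)$ are in an order-reversing bijection with the Delaunay cells of $\Del(\Gamma)$ containing the origin, a face $F$ corresponding to the cell $D_F$ through $0$ with $\dim F+\dim D_F=b_1(\Gamma)$. Thus $\Fac(\Vor_{\Gamma})$, ordered by reverse inclusion, is isomorphic to the poset of Delaunay cells through $0$ ordered by inclusion. The heart of the argument is then to build a bijection between these cells and the elements of $\OP_{\Gamma}$. Using Remark~\ref{Del}, a cell $D\ni 0$ is cut out in $H_1(\Gamma,\R)$ by the cube inequalities around $0$; for each edge the functional $e^*$ is then either identically $0$ on $D$ (these edges form a set $S$) or ranges over $[0,1]$, resp. $[-1,0]$, which records an orientation $\phi_S$ of $\Gamma\smallsetminus S$. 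Conversely $(S,\phi_S)$ determines the candidate cell
\[
D(S,\phi_S):=\{x\in H_1(\Gamma,\R)\: :\: e^*(x)=0 \text{ for } e\in S,\ 0\le \phi_S(e)\,e^*(x)\le 1 \text{ for } e\notin S\},
\]
where we abusively regard $\phi_S(e)\in\{\pm 1\}$ as the sign recording the direction of $e$.

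The crucial claim is that $D(S,\phi_S)$ is a genuine Delaunay cell of dimension $b_1(\Gamma\smallsetminus S)$ precisely when $S\in\SP_{\Gamma}$ and $\phi_S$ is totally cyclic. This is exactly where Lemma~\ref{chso} enters: $D(S,\phi_S)$ meets the relative interior of the corresponding cube-orthant if and only if $\Gamma\smallsetminus S$ carries a strictly positive flow conforming to $\phi_S$, i.e. if and only if every edge lies on a cyclically oriented cycle, which by the equivalence of conditions (\ref{totcyc}) and (\ref{cc}) in Lemma~\ref{chso} means $\phi_S$ is totally cyclic (total cyclicity forcing $\Esep(\Gamma\smallsetminus S)=\emptyset$, hence $S\in\SP_{\Gamma}$, by condition (\ref{c2})). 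Checking that this correspondence respects the orders — the restriction order on $\OP_{\Gamma}$ matching the face relation among cells, so that a restriction $(T,\phi_T)$ of $(S,\phi_S)$ yields a face $D(T,\phi_T)$ of $D(S,\phi_S)$ — and composing with the duality above gives part (i). The dimension bookkeeping is consistent with the rank functions: $(S,\phi_S)$ has rank $b_1(\Gamma\smallsetminus S)$ in $\OP_{\Gamma}$ via Corollary~\ref{rank}, equal to $\dim D(S,\phi_S)$, equal to the codimension of the dual face.

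Finally, for part (ii) I would show that the equivalence $\sim$ of Definition~\ref{equiv-or} corresponds, under this bijection, to the $H_1(\Gamma,\Z)$-translation action on the faces of $\Vor_{\Gamma}$. Translating a cell through $0$ by a lattice vector moves it across the arrangement and alters the integer floors of the $e^*$, changing the orientation representative while fixing the support $S$; concretely, translating by the class of a cycle reverses $\phi_S$ along that cycle. Two orientations of $\Gamma\smallsetminus S$ are lattice-translates of one another exactly when they differ by a sequence of directed-cycle reversals, and the classical fact that such reversals preserve, and are the only moves preserving, the outdegree vector $\md^+$ identifies the translation orbits with the fibres of $\md^+$, i.e. with the $\sim$-classes; this yields $\ov{\OP_{\Gamma}}\cong\ov{\Fac(\Vor_{\Gamma})}$. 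I expect this last step to be the main obstacle: proving cleanly that the lattice acts on orientations of fixed support exactly by directed-cycle reversals and that $\md^+$ is a complete invariant of this action is the chip-firing-type core of the conjecture, and tying the geometry of the Voronoi tiling of the torus $\Alb(\Gamma)$ to the purely combinatorial outdegree equivalence is where the real work lies, whereas the dimension and total-cyclicity analysis of the third paragraph should follow formally from Lemma~\ref{chso} and Remark~\ref{Del}.
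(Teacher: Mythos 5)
You should first be aware that the statement you are attacking is Conjecture~\ref{geo-conj}: the paper offers no proof of it, only the remark that part (i) would generalize the known bijection (from \cite{OS} and \cite{BdlHN}) between the codimension-one faces of $\Vor_{\Gamma}$ and the oriented cycles of $\Gamma$. So there is no argument in the text to compare yours against, and your proposal has to be judged on its own terms.

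As a strategy it is the natural one and looks viable: the reduction to $\Gamma^3$ via Lemma~\ref{or-conn} and Proposition~\ref{Del-equ}, the Voronoi--Delaunay duality sending a face of $\Vor_{\Gamma}$ to the complementary-dimensional Delaunay cell through the origin, the dictionary between the cube faces of Remark~\ref{Del} and pairs $(S,\phi_S)$, and above all the use of Lemma~\ref{chso} to show that $H_1(\Gamma,\R)$ meets the relative interior of the cube face attached to $(S,\phi_S)$ exactly when $\phi_S$ is totally cyclic (a strictly positive conforming circulation exists iff every edge lies on a conforming directed cycle) are all the right ingredients. But what you have written is a plan, not a proof, and the deferred steps are not all equally routine. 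You still need to show that every Delaunay cell through $0$ determines a \emph{unique minimal} cube face, so that the assignment $D\mapsto (S,\phi_S)$ is well defined and injective; you need to actually verify the order-compatibility, matching the reverse-inclusion order on $\Fac(\Vor_{\Gamma})$ with the convention that larger elements of $\OP_{\Gamma}$ have \emph{smaller} support; and for (ii) you need the explicit computation of which $\lambda\in H_1(\Gamma,\Z)$ carry a cell through the origin to another cell through the origin. On the other hand, the step you single out as ``the main obstacle'' is more tractable than you fear: such a $\lambda$ is forced to satisfy $e^*(\lambda)=0$ for $e\in S$ and $e^*(\lambda)\in\{0,\pm 1\}$ for $e\notin S$, with the sign constrained by $\phi_S$, so a single translation reverses $\phi_S$ precisely along the support of a conforming circulation; and the fact that two orientations of $\Gamma\smallsetminus S$ have the same outdegree vector if and only if the set of edges on which they disagree is such a circulation is an elementary count of in- and out-edges at each vertex, not a chip-firing theorem. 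In short: no step of your outline looks destined to fail, but as it stands the argument is incomplete, and the real remaining work is careful bookkeeping rather than a missing idea.
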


The above conjecture $(i)$ generalizes the bijection (proved in \cite[Prop. 5.2]{OS}
and \cite[Prop. 6]{BdlHN}) between the codimension-one faces of $\Vor_{\Gamma}$
and the oriented cycles of $\Gamma$ (which correspond to the elements $S\in
\OP_{\Gamma}$ such that $b_1(\Gamma\smallsetminus S)=1$).
Therefore part $(i)$ proposes an answer to the interesting problem
posed in \cite[Page 174]{BdlHN}:
``More ambitiously, one would like to understand the combinatorics
of the Voronoi polyhedron in terms of oriented circuits of the graph''.

\end{nota}

\subsection{Conclusions}
\begin{lemma}\label{supp-map}
The support map  $\supp:\OP_{\Gamma}   \la \SP_{\Gamma}$
is a quotient of posets. Moreover, given $S, T\in \SP_{\Gamma}$ such that
$S$ covers $T$, and a totally cyclic orientation $\phi_T$ of $\Gamma\smallsetminus T$, there
are at most two (possibly equal) extensions of $\phi_T$ to a totally cyclic orientation $\phi_S$
of $\Gamma\smallsetminus S$.
\end{lemma}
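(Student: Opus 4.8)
The plan is to establish the displayed ``moreover'' statement first (the bound on extensions across a covering relation) and then deduce that $\supp$ is a quotient. Throughout write $\Gamma':=\Gamma\smallsetminus S$ and $A:=T\smallsetminus S$. Since $S$ covers $T$ we have $S\subsetneq T$, so $A\neq\emptyset$, and $\Gamma'\smallsetminus A=\Gamma\smallsetminus T$. Both $\Gamma'$ and $\Gamma'\smallsetminus A$ are free from separating edges (they lie in $\SP_{\Gamma}$), and by Corollary~\ref{rank} the covering relation forces $b_1(\Gamma')=b_1(\Gamma'\smallsetminus A)+1$.

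The key step, and the main obstacle, is to identify $A$ as a single C1-set of $\Gamma'$. I would argue this in the matroid language of Lemma~\ref{flats}: a covering relation in $\SP_{\Gamma}$ is a covering in the lattice of flats of $M^*(\Gamma)$, so $T={\rm cl}(S\cup\{a\})$ for any $a\in A$, i.e.\ $e\in A$ iff $e\notin S$ and $e^*\in{\rm span}(\{f^*\}_{f\in S},a^*)$ in $H_1(\Gamma,\R)^*$. Restricting functionals to $H_1(\Gamma',\R)=\bigcap_{f\in S}\ker f^*$ annihilates exactly ${\rm span}\{f^*\}_{f\in S}$, so this condition becomes $e^*_{|H_1(\Gamma')}\in{\rm span}(a^*_{|H_1(\Gamma')})$ with $e$ non-separating in $\Gamma'$; by Corollary~\ref{cH}(\ref{cH2}) this holds exactly when $e$ and $a$ lie in the same C1-set of $\Gamma'$. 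Hence $A$ is precisely the C1-set $S_a$ of $\Gamma'$, so $\Gamma'(A)$ is a cycle (Definition~\ref{C1}); alternatively $b_1(\Gamma'(A))=1$ by the additive formula (\ref{b1}).

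With this in hand the bound is clean. Let $\sigma\colon\Gamma'\to\Gamma'(A)$ contract all edges not in $A$; it is the identity on $A$, the full edge set of the cycle $\Gamma'(A)$. If $\phi_S$ is any totally cyclic orientation of $\Gamma'$ restricting to $\phi_T$, then its push-forward $\sigma_*\phi_S$ is totally cyclic on $\Gamma'(A)$, because a directed cut of $\Gamma'(A)$ given by some $\mathcal W\subsetneq V(\Gamma'(A))$ pulls back to the directed cut $\sigma^{-1}(\mathcal W)$ of $\Gamma'$ (the contracted edges stay inside the parts and never cross). As $\Gamma'(A)$ is a cycle it has exactly two cyclic orientations (Remark~\ref{orex}), and $\sigma_*\phi_S$ records the orientation $\phi_S$ assigns to the edges of $A$; thus $\phi_S$ is determined on $A$ by one of two choices and on $\Gamma'\smallsetminus A$ by the fixed $\phi_T$, giving at most two extensions. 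Both choices moreover occur: orienting $A$ by either cyclic orientation of $\Gamma'(A)$ and keeping $\phi_T$ elsewhere yields a totally cyclic orientation of $\Gamma'$, which one checks via Lemma~\ref{chso}(\ref{cc}) by exhibiting, for each edge of $A$, a cyclically oriented cycle through it, built from the $A$-edges together with oriented connecting paths inside the totally cyclically oriented components of $\Gamma'\smallsetminus A$ (these paths exist by Lemma~\ref{chso}(\ref{vw})).

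Finally I would deduce that $\supp$ is a quotient. The map is monotone by definition and surjective by Lemma~\ref{chso}(\ref{c2}), which already yields the implication ``$\exists\,\widetilde S\leq\widetilde T\Rightarrow S\leq T$''. For the converse, given $S\leq T$ in $\SP_{\Gamma}$, choose a totally cyclic orientation $\phi_S$ of $\Gamma\smallsetminus S$ (Lemma~\ref{chso}(\ref{c2})) and a saturated chain $S=U_0\lessdot U_1\lessdot\cdots\lessdot U_m=T$ in $\SP_{\Gamma}$. Applying the existence of extensions proved above to each covering step, I extend $\phi_S$ successively to totally cyclic orientations $\phi_{U_j}$ of $\Gamma\smallsetminus U_j$, ending with $\phi_T:=\phi_{U_m}$ on $\Gamma\smallsetminus T$, each restricting to the previous one and hence all restricting to $\phi_S$. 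Then $(T,\phi_T)\geq(S,\phi_S)$ in $\OP_{\Gamma}$ with $\supp(S,\phi_S)=S$ and $\supp(T,\phi_T)=T$, which is exactly the required lift. The one genuinely delicate point is the identification of $A$ with a C1-set; once that structural fact is secured, both halves follow from the cycle $\Gamma'(A)$ having only two cyclic orientations.
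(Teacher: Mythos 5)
Your proof is correct and follows essentially the same route as the paper: the covering step $S\gtrdot T$ is reduced to the fact that $(\Gamma\smallsetminus S)(T\smallsetminus S)$ is a cycle with exactly two cyclic orientations, which bounds the extensions, and the quotient property follows by lifting along saturated chains. Your matroid-flats detour for identifying $T\smallsetminus S$ as a C1-set is an optional elaboration (the paper goes directly via $b_1(\Gamma(T))=1$ together with connectedness and absence of separating edges, as you note), and your explicit existence-of-extensions and chain arguments merely spell out steps the paper leaves implicit.
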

\begin{proof}
We already observed that   $\supp$ is  surjective and order preserving.
For the remaining part   we use the fact that $\SP_{\Gamma}$ is graded
by the function $b_1(\Gamma \smallsetminus S)$(see \ref{rank}).
By \ref{supp-equiv} and \ref{or-conn} we can assume that $\Gamma$ is 3-edge connected.
In particular, we have $\Esep = \emptyset$.

It is easy to see that it suffices to
  assume  $S=\emptyset$. The hypothesis that $\emptyset$ covers $T$ is equivalent
to the fact that $b_1(\Gamma)=b_1(\Gamma\smallsetminus T)+1$
or, equivalently, that  $b_1(\Gamma(T))=1$. Hence
$\Gamma(T)$ is a cycle (as $\Esep = \emptyset$).

Using the characterization
\ref{chso}
(in particular part (\ref{vw})) it is easy to check the only way to extend the orientation $\phi_T$
of $\Gamma\smallsetminus T$ to a totally cyclic orientation on all of $\Gamma$ is by choosing for the edges
of $T$ one of the two cyclic orientations of the cycle $\Gamma(T)$.
\end{proof}

Summing up what we have proved in this section, we get the following

\begin{thm}\label{final-thm}
Let $\Gamma$ and $\Gamma'$ be two
graphs.
The following facts are equivalent:
\begin{enumerate}[(i)]
 \item
 \label{(i)}  $ [\Gamma^3]_{\rm cyc} =[\Gamma'^3]_{\rm cyc}$.
\item
 \label{(ii)} ${\rm Del}(\Gamma)\cong {\rm Del}(\Gamma')$.
\item
 \label{(iii)} $\SP_{\Gamma}\cong \SP_{\Gamma'}$ as posets.
\item
 \label{(iv)}$\OP_{\Gamma}\cong \OP_{\Gamma'}$ as posets.
 \item
  \label{(v)} ${\ov{\OP_{\Gamma}}} \cong{\ov{\OP_{\Gamma'}}}$ as posets.
\end{enumerate}
\end{thm}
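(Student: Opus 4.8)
The plan is to dispose quickly of everything except the two implications asserting that the orientation posets recover the $3$-edge connected class, and to concentrate the work there. The equivalence (\ref{(i)})$\Leftrightarrow$(\ref{(ii)}) is exactly Proposition~\ref{Del-equ}(\ref{Del3}), and (\ref{(i)})$\Leftrightarrow$(\ref{(iii)}) is exactly Lemma~\ref{supp-equiv}(ii), since condition (\ref{(i)}) unwinds to $\Gamma^3\equiv_{\rm cyc}\Gamma'^3$. For (\ref{(i)})$\Rightarrow$(\ref{(iv)}) and (\ref{(i)})$\Rightarrow$(\ref{(v)}) I would chain Lemma~\ref{or-conn}, which gives $\OP_\Gamma\cong\OP_{\Gamma^3}$ and $\ov{\OP_\Gamma}\cong\ov{\OP_{\Gamma^3}}$ (and the same for $\Gamma'$), with Lemma~\ref{or-cyclic}, which gives invariance under cyclic equivalence; from $\Gamma^3\equiv_{\rm cyc}\Gamma'^3$ one then reads off $\OP_\Gamma\cong\OP_{\Gamma^3}\cong\OP_{\Gamma'^3}\cong\OP_{\Gamma'}$, and likewise for $\ov{\OP}$.

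Thus it remains to prove (\ref{(iv)})$\Rightarrow$(\ref{(iii)}) and (\ref{(v)})$\Rightarrow$(\ref{(iii)}); together with the equivalences above these close all loops. Invoking Lemmas~\ref{supp-equiv} and \ref{or-conn} I would first reduce to $\Gamma$ and $\Gamma'$ being $3$-edge connected, so that $\Esep=\emptyset$ and every C1-set is a singleton. The crux is then to reconstruct the poset $\SP_\Gamma$ out of the abstract poset $\OP_\Gamma$ (respectively $\ov{\OP_\Gamma}$) in a way that is functorial for poset isomorphisms. Since by Lemma~\ref{supp-map} the support map $\supp\colon\OP_\Gamma\to\SP_\Gamma$ is a quotient of posets, factoring through $\ov{\OP_\Gamma}$, it is enough to show that its fiber partition, i.e. the relation ``same support'', is determined by the abstract poset; an isomorphism $\OP_\Gamma\cong\OP_{\Gamma'}$ (resp. $\ov{\OP_\Gamma}\cong\ov{\OP_{\Gamma'}}$) will then descend to the desired $\SP_\Gamma\cong\SP_{\Gamma'}$.

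To detect ``same support'' I would proceed by descending induction on the rank $\rho(S,\phi_S)=b_1(\Gamma\smallsetminus S)$, available because $\SP_\Gamma$ is graded (Corollary~\ref{rank}) and $\supp$ is a quotient. The top fiber, over the maximum $\emptyset\in\SP_\Gamma$, is canonically the set of maximal elements of $\OP_\Gamma$. The content of Lemma~\ref{supp-map}, namely that along each covering relation a totally cyclic orientation extends in at most two ways, makes $\supp$ behave like a branched double cover over each covering step; combined with the global orientation-reversal involution $(S,\phi_S)\mapsto(S,\phi_S^{-1})$, which preserves supports, this should let me reconstruct the rank-$r$ fibers from the already-reconstructed part of $\SP_\Gamma$ in rank $>r$. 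A necessary constraint I would use throughout is that two distinct elements of the same support admit no common upper bound. In the $\ov{\OP}$ case the outdegree function supplies extra rigidity: it identifies the two cyclic orientations of each single cycle, so that $\supp$ is already a bijection in low rank, which should streamline the base of the induction.

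The step I expect to be the main obstacle is precisely this intrinsic characterization of ``same support''. The no-common-upper-bound condition is necessary but not sufficient, as one already sees for the graph consisting of two vertices joined by three edges, where distinct edges also produce incomparable, upper-bound-free pairs; so the induction must genuinely exploit the at-most-two-lifts property of Lemma~\ref{supp-map} and the reversal involution to separate equal-support pairs from merely incomparable ones. Once the fiber partition is pinned down, $\SP_\Gamma$ is recovered as the quotient, yielding (\ref{(iii)}) and hence, through the equivalences already established, all five conditions.
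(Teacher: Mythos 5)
Your handling of four of the five implications coincides with the paper's: the equivalence of (i) and (ii) is Proposition~\ref{Del-equ}(\ref{Del3}); the equivalence of (i) and (iii) is Lemma~\ref{supp-equiv}; and (i)$\Rightarrow$(iv), (i)$\Rightarrow$(v) follow by chaining Lemma~\ref{or-conn} with Lemma~\ref{or-cyclic}, exactly as in the text. The issue is the remaining implications (iv)$\Rightarrow$(iii) and (v)$\Rightarrow$(iii). You correctly isolate the real difficulty: knowing that $\supp\colon\OP_{\Gamma}\to\SP_{\Gamma}$ is a poset quotient (Lemma~\ref{supp-map}) does not by itself let you transport an abstract isomorphism $\OP_{\Gamma}\cong\OP_{\Gamma'}$ down to $\SP_{\Gamma}\cong\SP_{\Gamma'}$; one must also know that the fibre partition of $\supp$ is definable from the abstract poset. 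But your proposal stops exactly there. The descending induction is only a plan: you never produce the intrinsic characterization of ``same support'' at a given rank; you yourself observe that the one invariant you name (absence of a common upper bound) is necessary but not sufficient, and your three-edge example is indeed a correct counterexample to sufficiency; and the auxiliary tool you invoke, the orientation-reversal involution, is an automorphism of $\OP_{\Gamma}$ but not visibly a poset-theoretic notion, so it cannot be used in the reconstruction until it has itself been singled out intrinsically among all poset automorphisms. As written, (iv)$\Rightarrow$(iii) and (v)$\Rightarrow$(iii) are not proved.

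For comparison, the paper disposes of these two implications in a single sentence, by appealing to the quotient property of Lemma~\ref{supp-map} and to the factorization of $\supp$ through $\ov{\OP_{\Gamma}}$, without spelling out an intrinsic reconstruction of the fibres either. So your diagnosis of where the weight of the argument lies is sound, and more candid than the text; but a proposal whose final paragraph declares the key step to be ``the main obstacle'' has a genuine gap at the only nontrivial point of the theorem. To close it you need an actual poset-theoretic description of the equivalence $\supp(x)=\supp(y)$, valid for both $\OP_{\Gamma}$ and $\ov{\OP_{\Gamma}}$; once that is in place, the descent of the isomorphism to $\SP_{\Gamma}$ is immediate from the quotient property, as you say.
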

\begin{proof}
The equivalence (\ref{(i)})$\Leftrightarrow$(\ref{(ii)}) was proved in Proposition
\ref{Del-equ}(\ref{Del3}),
while the equivalence (\ref{(i)})$\Leftrightarrow$(\ref{(iii)}) follows from Lemma~\ref{supp-equiv}.
The implications (\ref{(i)})$\Rightarrow$(\ref{(iv)}) and (\ref{(i)})$\Rightarrow$(\ref{(v)}) follow from Lemmas
\ref{or-cyclic} and \ref{or-conn}. Finally the implications (\ref{(iv)})$\Rightarrow$(\ref{(iii)}) and
(\ref{(v)})$\Rightarrow$(\ref{(iii)}) follows from the fact that $\SP_{\Gamma}$ is a quotient poset
of ${\ov{\OP_{\Gamma}}}$ and $\OP_{\Gamma}$ (see Lemma \ref{supp-map} and the discussion after
definition \ref{equiv-or}).
\end{proof}

\appendix

\section{Tropical Torelli map}\label{trop-torelli}

\subsection{}
Tropical Geometry is a rather young area of mathematics, which has been developing fast
in recent years. Nevertheless an exhaustive construction of the tropical analogues of certain fundamental algebro-geometric objects
is not yet available in the literature; this is the case
of the moduli spaces of curves and  abelian varieties,
in which we are
here interested.

In this appendix, fixing $g\geq 2$ and assuming  
the existence and some  properties of the moduli spaces $\Mgt$ (of compact tropical curves
of genus $g$)  and $\Agt$ (of principally polarized tropical abelian varieties
of dimension $g$), and of the 
  tropical Torelli map
$\tgt: \Mgt\to \Agt$,
we prove    that the map $\tgt$ is of tropical
degree one, as conjectured by  Mikhalkin and Zharkov in
\cite[Sect. 6.4]{MZ}.  
This is done in Theorem~\ref{MZ-conj}, whose proof relies on Theorem  \ref{main-thmt}.
The present appendix greatly benefited from several e-mail exchanges with G. Mikhalkin.

 In this section we assume that all graphs are  connected and all tropical curves are   compact
 and connected.
Recall that a graph is called {\it $3$-regular} if all its vertices
have valence $3$. We say that a tropical curve $C$ is $3$-regular if its associated graph  
  is $3$-regular (see Remark~\ref{termtrop}). 

\begin{remark}
\label{3g-3e}
A connected $3$-regular graph of genus (i.e. first Betti number) $g$ has $3g-3$ edges and $2g-2$ vertices.
\end{remark}
We will need the following easy 

\begin{lemma}\label{3-reg-3-conn}
A $3$-regular graph   is $3$-edge connected if and only if it is $3$-connected.
\end{lemma}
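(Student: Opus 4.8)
The forward implication, that a $3$-connected graph is $3$-edge connected, holds for arbitrary graphs and is already recorded in Section~\ref{Sec2.2}; so the plan is to prove only the converse, namely that a $3$-regular, $3$-edge connected graph $\Gamma$ is $3$-connected. I would argue by contraposition: assuming $\Gamma$ admits a vertex cut of size at most $2$, I would manufacture an edge cut of size at most $2$, contradicting $3$-edge connectivity. We may assume $\Gamma$ has at least four vertices (the only smaller connected cubic graph being the dipole on two vertices). The two facts used repeatedly are that a $3$-edge connected graph has no separating edge, and that any partition of $V(\Gamma)$ into two nonempty parts is crossed by at least three edges.

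First I would dispose of a single cut vertex $w$. Since $\Gamma\smallsetminus w$ is disconnected, its components are each joined to the rest of $\Gamma$ only through $w$; as $w$ has valence $3$ and there are at least two components, the three edges at $w$ distribute so that some component is attached to $w$ by exactly one edge. That edge is then a separating edge of $\Gamma$, contradicting $3$-edge connectivity.

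The substantive case is a vertex cut $\{u,v\}$ of size $2$. I would let $A$ be one component of $\Gamma\smallsetminus\{u,v\}$ and $B$ the union of the remaining components, so that $A$ and $B$ are both nonempty and no edge joins $A$ to $B$, and write $e_{uv}$ for the number of edges between $u$ and $v$ and $a_u,a_v,b_u,b_v$ for the numbers of edges from $u$ to $A$, $v$ to $A$, $u$ to $B$, $v$ to $B$. Counting valences gives $e_{uv}+a_u+b_u=3$ and $e_{uv}+a_v+b_v=3$, while isolating $A$ and $B$ forces $a_u+a_v\geq 3$ and $b_u+b_v\geq 3$. Summing the two valence relations yields $a_u+a_v+b_u+b_v=6-2e_{uv}$, which together with the inequalities forces $e_{uv}=0$ and $a_u+a_v=b_u+b_v=3$, whence $b_u=a_v$ and $b_v=a_u$. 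Now comes the key regrouping: the edges crossing between $\{u\}\cup A$ and $\{v\}\cup B$ number $b_u+a_v=2a_v$, and those crossing between $\{v\}\cup A$ and $\{u\}\cup B$ number $a_u+b_v=2a_u$. Since $a_u+a_v=3$, at least one of $a_u,a_v$ is at most $1$, so one of these two partitions is crossed by at most two edges (the limiting values $a_u=0$ or $a_v=0$ would even disconnect $\Gamma$). This edge cut of size at most $2$ contradicts $3$-edge connectivity, and the proof is complete.

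The one delicate point, and the main obstacle, is exactly this $2$-cut case: cutting off a single component only gives the harmless bound $a_u+a_v\geq 3$, and the contradiction surfaces only after pairing $u$ with $A$ and $v$ with $B$ (or the reverse) to reveal a smaller cut. Everything else—the cut-vertex case and the bookkeeping for possible loops at $u$ or $v$, which would leave too few edges to connect the pieces and again force a separating edge—is routine.
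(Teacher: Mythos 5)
Your proof is correct and follows essentially the same strategy as the paper's: argue by contraposition, dispose of loops and of a single cut vertex via the valence-$3$ pigeonhole, and then treat a $2$-vertex cut $\{u,v\}$ by exhibiting an edge cut of size at most $2$ that regroups the vertices as $\{u\}\cup A$ versus $\{v\}\cup B$. The only real difference is in how that last case is verified: the paper selects, for each cut vertex $v_i$, an edge $e_i$ whose component receives no other edge from $v_i$ and asserts that $\{e_1,e_2\}$ disconnects, whereas you justify the same regrouping by an explicit count ($e_{uv}=0$, $a_u+a_v=b_u+b_v=3$, hence one of the two crossing numbers $2a_u$, $2a_v$ is at most $2$), which is arguably the more transparent route to the contradiction.
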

\begin{proof}
Since $3$-connectivity
always implies   $3$-edge-connectivity 
there is 
only one implication to prove. 
Let $\Gamma$ be a $3$-regular graph;
suppose   that $\Gamma$ is not $3$-connected
and let us prove that it is not $3$-edge connected. 

First of all, if $\Gamma$ has a loop based at a vertex $v$
then, as $v$ has valence 3 and the loop contributes by two to the valence 
of $v$, there is a unique other edge attached to $v$, 
which is necessarily a separating edge of $\Gamma$, 
so we are done. We can assume that $\Gamma$ has no loops.

Assume that $\Gamma$ has a separating vertex, $v$.
Let $\Gamma'$ be the graph obtained from $\Gamma$ 
by removing $v$ and all the edges adjacent to it.
By assumption, $\Gamma'$ is not connected. Since $v$ has valence 
$3$ 
there is a connected component $\Gamma''$ 
of $\Gamma'$ with the property that there exists a unique edge of $\Gamma$,
call it $e$, connecting $v$ with one of the vertices of $\Gamma''$.
Clearly $e$ is a separating edge of $\Gamma$, and we are done.
We can assume that $\Gamma$ is $2$-connected. 

Since
$\Gamma$ is not $3$-connected, 
there exists a separating pair of vertices,  $\{v_1, v_2\}$, none of 
which is a separating vertex.
Let $\Gamma'\subset \Gamma$ be the graph obtained from $\Gamma$ by removing $v_1$, $v_2$, and all the edges
adjacent to them. By assumption $\Gamma '$ has at least two connected components.
Since $v_i$ has valence 3 for $i=1,2$, there are at most two edges joining
 $v_1$ and $v_2$ 
(for otherwise $\Gamma$ has no other vertex and there is nothing to prove).
Therefore
there exists an  edge, $e_i$,  touching $v_i$ and such that
the component of  $\Gamma '$  
 which touches $e_i$   touches no other edge adjacent to $v_i$.
  It is clear that the pair $\{e_1, e_2\}$ is a disconnecting pair of edges of $\Gamma$.
Notice that   the edge $e_i$ is not necessarily unique, but we are free to make a   choice without changing
the  conclusion.
 The proof is  complete.

\end{proof}
We shall now make some reasonable assumptions on 
  the moduli spaces $\Mgt$, $\Agt$ and on the tropical map $\tgt$.
  After each set of assumptions we will provide  motivations and, when possible, some references.

\begin{assumption}\label{Mg-ass}
There exists a moduli space $\Mgt$ of dimension $3g-3$
para\-metrizing tropical equivalence classes of (compact) tropical curves
of genus $g$. A generic point of $\Mgt$ is a $3$-regular tropical curve.
\end{assumption}

For a strategy to construct $\Mgt$, see \cite[Sect. 3.1]{MIK0}.

A neighborhood of a $3$-regular curve $C$ in $\Mgt$ is obtained by varying the lengths $l(e)$
of the edges of the corresponding graph, and therefore, by Remark  \ref{3g-3e}, it is isomorphic to  
$\R_{>0}^{3g-3}\subset \R^{3g-3}$. This explains the dimension requirement.

The condition on the generic point (i.e. a point varying in an open dense subset of $\Mgt$)
derives from the fact that  specializations of a tropical curve
are obtained by letting some of its edge lengths  go to $0$, i.e. by contracting
some of its edges (see \cite[Sec.3.1.D]{MIK0}).
Therefore, if we have a tropical curve $C_0$ with a vertex $v$ of
valence $k+l\geq 4$, with $k, l\geq 2$, we can realize it as the limit of  tropical curves $C_t$ in which the vertex
$v$ is replaced by two vertices $v_1$ and $v_2$ of valence resp. $k+1$ and $l+1$  joined
by a new edge $e$:
\begin{figure}[!htp]
$$\xymatrix@=1pc{
&&*{\bullet} \ar@{.}[dd]^{k} \ar@{-}[drr]&& & && *{\bullet} \ar@{.}[dd]_{l} &&&
*{\bullet} \ar@{.}[dd]^{k} \ar@{-}[drr]&& &&*{\bullet} \ar@{.}[dd]_{l} &&\\
C_t\ar@{=}[r]&&&&*{\bullet}\ar@{-}[r]^{e}&*{\bullet} \ar@{-}[urr]\ar@{-}[drr]_(0.1){v_2}&&&
\ar@{->}[r]&& && *{\bullet}\ar@{-}[urr]\ar@{-}[drr] & && \ar@{=}[r]& C_0\\
&&*{\bullet} \ar@{-}[urr]_(.9){v_1}&& & && *{\bullet} &&&
*{\bullet} \ar@{-}[urr]_(.9){v}&& &&*{\bullet}&&
}$$
\caption{$C_t$ specializes to $C_0$ by letting $l(e)\to 0$.}
\label{trop-spe}
\end{figure}
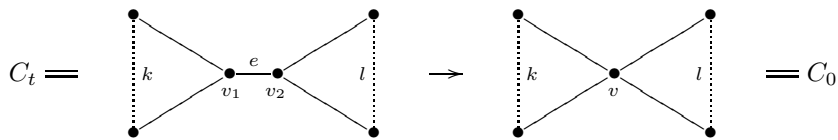

By applying the above procedure on all the vertices of valence greater than $3$,
we obtain that every tropical curve is the limit of
$3$-regular tropical curves. 

The previous discussion yields the following useful observation.
\begin{remark}
\label{sptrop}
Let $C_0$ be a tropical curve and $(\Gamma_0,l_0)$ the associated metric graph.
To prove that $C_0$ is the specialization of tropical  curves $C_t$ of a certain combinatorial type,
i.e. having a certain
associated graph $\Gamma_t$, 
is equivalent
to proving that there exists a surjective edge-contracting map
$\sigma:\Gamma_t \to \Gamma_0$, such that $b_1(\Gamma_t)=b_1(\Gamma_0 )$.

Indeed, it suffices to define the length function $l_t$ for $C_t$ so that
$l_t$ equals $l$ on the edges not contracted by $\sigma$, and $l_t$ tends to $0$ on every edge contracted by $\sigma$.
\end{remark}

\begin{assumption}\label{Ag-ass}
There exists a moduli space $\Agt$ of dimension $\binom{g+1}{2}$
parame\-trizing principally polarized tropical abelian
varieties of dimension $g$.
\end{assumption}

\begin{assumption}\label{tg-ass}
The Torelli map
$$\begin{aligned}
\tgt:\Mgt& \longrightarrow \Agt\\
C & \mapsto (\Jac(C), \Theta_C)
  \end{aligned}
$$
is a tropical map.
Its image will be denoted  $\Jgt:=\tgt(\Mgt)$.

Locally around a $3$-regular tropical curve $C$,
$\tgt$ is given by the restriction of a linear map $L_C: \R^{3g-3}\to \R^{\binom{g+1}{2}}$
which sends $\Z^{3g-3}$ to $\Z^{\binom{g+1}{2}}$.
\end{assumption}

Note that $\Jgt$  is a tropical variety.
A neighborhood in $\Agt$ of $\tgt(C)$, for some $C\in \Mgt$,
is obtained by varying the entries of the symmetric $g\times g$ matrix associated to the scalar
product $(,)_l$ on $H_1(C, \R)$, and therefore it is isomorphic to an open subset of $\R^{\binom{g+1}{2}}$.
 Since the entries of $(,)_l$ depend linearly on the length
function $l$, we get that $\tgt$ is given locally around $C$ by the restriction of a linear map
$L_C:\R^{3g-3}\to \R^{\binom{g+1}{2}}$. If the length function $l$ takes values
in $\Z$ then also the entries of $(,)_l$ with respect to a basis in $H_1(C,\Z)$
will be in $\Z$.

Mikhalkin and Zharkov conjectured (in \cite[Sec. 6.4]{MZ}) that $\tgt$, although
not injective, is of tropical degree one to its image,
in the following sense:
\begin{defi}\label{tr-deg1}
The map $\tgt:\Mgt\to \Jgt$ is of {\it tropical degree one}
 if the following two conditions are satisfied:
\begin{enumerate}[(i)]
 \item\label{con-deg1} 
 The inverse image via $\tgt$
 of a generic point $\tgt(C)$ of $\Jgt$
 consists only of $C$.
\item \label{con-deg2} For a generic point $\tgt(C)$ of $\Jgt$, the linear map
$L_C$ of Assumption~\ref{tg-ass} is {\it primitive}, i. e.
$L_C^{-1}\left(\Z^{\binom{g+1}{2}}\right)=\Z^{3g-3}$.
\end{enumerate}
\end{defi}

\subsection{}
In this final part we shall prove the following
\begin{thm}\label{MZ-conj}
The Torelli map $\tgt:\Mgt\to \Jgt$ is of tropical
degree one.
\end{thm}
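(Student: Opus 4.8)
The plan is to verify the two conditions in Definition~\ref{tr-deg1} separately, after first identifying which tropical curves give rise to a \emph{generic} point of $\Jgt$. By Assumption~\ref{Mg-ass} a generic point of $\Mgt$ is a $3$-regular curve, whose graph $\Gamma$ has $3g-3$ edges (Remark~\ref{3g-3e}). For a fixed $3$-regular $\Gamma$, Theorem~\ref{main-thml} tells us that $\Alb(\Gamma,l)$ is determined by $[(\Gamma^3,l^3)]_{\rm cyc}$, hence by the lengths $l^3(e_S)=\sum_{e\in S}l(e)$ indexed by $S\in\Set\Gamma$; since $l\mapsto l^3$ is a surjective linear map onto $\R^{\#\Set\Gamma}$, the image in $\Jgt$ of the stratum of $\Gamma$ has dimension $\#\Set\Gamma$. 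By Corollary~\ref{cor3} this equals $3g-3$ exactly when $\Gamma$ is $3$-edge connected, and is strictly smaller otherwise; by Lemma~\ref{3-reg-3-conn}, for $3$-regular $\Gamma$ this is the same as being $3$-connected. As there are finitely many combinatorial types, and the non-$3$-regular strata already have dimension $<3g-3$, I would conclude that $\dim\Jgt=3g-3$ and that the locus of points of $\Jgt$ coming from $3$-connected $3$-regular curves is dense. Thus a generic point of $\Jgt$ has the form $\tgt(C)$ with $C$ a $3$-connected $3$-regular tropical curve.

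For condition~(\ref{con-deg1}), I would take such a generic $p=\tgt(C)$ and an arbitrary $C'\in\tgt^{-1}(p)$. Then $(\Jac C',\Theta_{C'})\cong(\Jac C,\Theta_C)$, so the second part of Theorem~\ref{main-thmt} (available because $C$ is $3$-connected) forces $C'$ to be tropically equivalent to $C$, i.e.\ $C'=C$ in $\Mgt$; hence $\tgt^{-1}(p)=\{C\}$. For condition~(\ref{con-deg2}), I would use that $L_C$ sends $l=(l_e)_e$ to the Gram matrix of $(,)_l$ on a $\Z$-basis of $H_1(\Gamma,\Z)$, whose entries are $(c,c')_l=\sum_e l_e\,e^*(c)e^*(c')$. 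Since $L_C(\Z^{3g-3})\subset\Z^{\binom{g+1}{2}}$ automatically, primitivity reduces to showing that integrality of all the pairings $(c,c')_l$ forces every $l_e\in\Z$. Fixing an edge $e_0$, the set $\{e_0\}$ is a C1-set (Corollary~\ref{cor3}), so Lemma~\ref{intS} supplies cycles $\Delta_1,\Delta_2$ with $E(\Delta_1)\cap E(\Delta_2)=\{e_0\}$; for their classes $c_1,c_2\in H_1(\Gamma,\Z)$ the product $e^*(c_1)e^*(c_2)$ is supported on $\{e_0\}$ and equals $\pm1$ there, whence $(c_1,c_2)_l=\pm l_{e_0}$ and so $l_{e_0}\in\Z$. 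This proves $L_C$ primitive.

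I expect the main obstacle to be the genericity analysis of the first step: rigorously extracting $\dim\Jgt=3g-3$ and the density of the $3$-connected locus from the assumed structure of $\Mgt$, $\Agt$ and $\tgt$ requires care, since it is exactly here that the non-injectivity of $\tgt$ must be controlled, via the fact that the non-$3$-connected strata have strictly lower-dimensional image. Once this is in place, condition~(\ref{con-deg1}) is immediate from the strong Torelli theorem and condition~(\ref{con-deg2}) from the single-edge length recovery of Lemma~\ref{intS}. A minor separate point is the case $g=2$, where the only $3$-edge connected $3$-regular graph is the theta graph, which has just two vertices; there the length-recovery argument above still applies verbatim, giving injectivity directly, so the conclusion is unaffected.
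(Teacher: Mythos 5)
Your proof is correct, and conditions (\ref{con-deg1}) and (\ref{con-deg2}) of Definition~\ref{tr-deg1} are handled exactly as in the paper: the strong form of Theorem~\ref{main-thmt} gives $\tgt^{-1}(\tgt(C))=\{C\}$, and Corollary~\ref{cor3} plus Lemma~\ref{intS} recover each $l(e)$ as a single pairing $(c_1,c_2)_l$, forcing primitivity of $L_C$. Where you genuinely diverge is the genericity step. The paper identifies the generic point of $\Jgt$ by a specialization argument: for a generic ($3$-regular) $C$ one has $\tgt(C)=\tgt(C^3)$ by Theorem~\ref{main-thmt}, and the combinatorial Proposition~\ref{spec3} shows that the $3$-edge connected curve $C^3$ is a limit of $3$-regular, $3$-connected curves, so $\Jgt$ is the closure of the image of that locus. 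You instead run a dimension count: Theorem~\ref{main-thml} says the image of the stratum of $\Gamma$ is parametrized (up to a finite group) by the $\#\Set\Gamma$ quantities $l^3(e_S)$, and $\#\Set\Gamma=3g-3$ exactly for $3$-edge connected $3$-regular $\Gamma$ (Corollary~\ref{cor3}, Lemma~\ref{3-reg-3-conn}), so the finitely many other strata have strictly smaller-dimensional image. Your route is shorter and bypasses the four-step graph-extension construction of Proposition~\ref{spec3} entirely, and it yields $\dim\Jgt=3g-3$ as a byproduct; the cost, which you correctly flag, is that it leans on dimension theory for images of polyhedral strata in $\Agt$, i.e.\ precisely the foundational material the appendix is deliberately circumscribing via Assumptions \ref{Mg-ass}--\ref{tg-ass}, whereas the paper's argument stays within explicit graph combinatorics and the specialization structure of Remark~\ref{sptrop}. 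One more point in your favor: your closing remark about $g=2$ addresses a boundary case (the theta graph is $3$-edge connected but has only two vertices, so it is not $3$-connected under the paper's definition requiring at least four vertices) that the paper's own proof glosses over; as you note, the first part of Theorem~\ref{main-thmt} together with length recovery still gives injectivity there.
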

The proof  will be at the end, combining
 Theorem~\ref{main-thmt} with   Proposition~\ref{spec3}.  
\begin{nota}
\label{blowup}
Let $v\in V(\Gamma)$; we denote by $E_v(\Gamma)$ the set of edges of $\Gamma$ that are 
adjacent to the vertex $v$.
Suppose that $v$ has valence $M\geq 4$. In the proof of the following proposition we will use  an  elementary operation, called  a {\it (valence reducing) extension} of $\Gamma$ at $v$,
which is a kind of inverse     to the   contraction of an edge in $v$, and which has the effect of replacing $v$ by two new vertices of valence at least 3 and strictly smaller than $M$.
This will be a (not unique) graph $\Gamma'$ endowed with a map $\Gamma '\to \Gamma$
which is an isomorphism away from $v$ and which contracts a unique non loop edge, $e'$, to $v$.
Thus $\Gamma '$ is obtained by replacing the vertex $v$ by an edge  $e'$  having   extremal vertices 
$u'$ and $ w'$, and by distributing between $u'$ and $w'$ the edges adjacent to $v$, so that both $u'$ and $w'$ have valence at least 3.
More precisely,
we denote $V(\Gamma)=\{v, v_2\ldots v_{\gamma}\}$ and $V(\Gamma')=\{u',w', v_2'\ldots v_{\gamma}'\}$ with $v_i$ corresponding to $v_i'$ via the contraction map; similarly
we denote $E(\Gamma)=\{e_1\ldots e_{\delta}\}$ and $E(\Gamma')=\{e',e_1'\ldots e_{\delta}'\}$.
The edge $e'$ joins $u'$ to $w'$;
if an edge $e_i$ joins $v_j$ to $v_i$, then $e_i'$ joins $v_j'$ to $v_i'$. If an edge $e_i$ joins $v$ to $v_i$ we need to specify
whether $e'_i$ joins $u'$ to $v_i'$, or  $w'$ to $v_i'$ (which is why $\Gamma'$ is not unique).
For whichever choice we make, provided that $u'$ and $w'$ have valence at least 3 (this is possible as $M\geq 4$,
and there is   $e'$ joining $u'$ with $w'$)
it is clear that there is a map $\Gamma '\to \Gamma$ as wanted.

\begin{remark} 
\label{3'3}
Let $\Gamma$ be $3$-edge conected, and
let $\Gamma '\to \Gamma$ be an extension of $\Gamma$ at  a vertex $v$, such that
$e'$ is not a separating edge of $\Gamma'$.
Then any separating pair of edges of $\Gamma'$ is of type
$(e', e'_i)$ with $e_i$ a separating edge of $\Gamma\smallsetminus v$.

Here and in the sequel  $\Gamma\smallsetminus v=\Gamma\smallsetminus \{v\}$  
(we omit the braces to ease the notation) denotes the topological space 
obtained by removing the point corresponding to the vertex $v$
from the topological space naturally associated to $\Gamma$. In particular,
 $\Gamma\smallsetminus v$ is not a graph, but we will extend to it 
the terminology used for graphs, as no confusion is likely to arise.
\end{remark}
\end{nota}

\begin{prop}
\label{spec3} The locus  in $\Mgt$ of $3$-edge connected curves is equal to the closure
of the locus of $3$-regular, $3$-connected curves.
\end{prop}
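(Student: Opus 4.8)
The plan is to prove the two inclusions separately. Write $\mathcal{L}$ for the locus of $3$-edge connected curves in $\Mgt$ and $\mathcal{R}$ for the closure of the locus of $3$-regular, $3$-connected curves. For the inclusion $\mathcal{R}\subseteq \mathcal{L}$, note first that a $3$-regular, $3$-connected curve is $3$-edge connected by Lemma~\ref{3-reg-3-conn}. Every point of $\mathcal{R}$ is a specialization of such a curve, so by Remark~\ref{sptrop} its associated graph $\Gamma_0$ admits a surjective edge-contracting map $\Gamma\to \Gamma_0$ from a $3$-regular, $3$-connected graph $\Gamma$. Since contracting edges preserves $k$-edge connectivity (see Section~\ref{Sec2.2}), $\Gamma_0$ is $3$-edge connected, whence $\mathcal{R}\subseteq \mathcal{L}$.

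The content is in the reverse inclusion $\mathcal{L}\subseteq \mathcal{R}$. Here the goal is to show that every $3$-edge connected graph $\Gamma_0$ (which automatically has all valences $\geq 3$) is the target of a surjective edge-contracting map $\sigma\colon\Gamma_t\to \Gamma_0$ with $b_1(\Gamma_t)=b_1(\Gamma_0)$ and with $\Gamma_t$ both $3$-regular and $3$-connected; Remark~\ref{sptrop} then exhibits $\Gamma_0$ as a specialization of $\Gamma_t$, so that $\Gamma_0\in \mathcal{R}$. I would construct $\Gamma_t$ by induction on the valence excess $\sum_{v}(\mathrm{val}(v)-3)$, applying at each step a valence-reducing extension (see \ref{blowup}) at a vertex $v$ of valence $M\geq 4$. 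Such an extension adds one vertex and one non-loop edge, so it preserves $b_1$, and it lowers the excess by exactly one; iterating, we reach a $3$-regular graph, which by Lemma~\ref{3-reg-3-conn} is $3$-connected as soon as it is $3$-edge connected. Thus everything reduces to the claim: \emph{if $\Gamma$ is $3$-edge connected and $v$ has valence $M\geq 4$, then some valence-reducing extension of $\Gamma$ at $v$ is again $3$-edge connected.}

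To prove the claim I would invoke Remark~\ref{3'3}: if the edges at $v$ are split into groups $A$ (at $u'$) and $B$ (at $w'$), each of size $\geq 2$, in such a way that the new edge $e'$ is non-separating, then the only candidate separating pairs of the extension are the pairs $(e',e_i')$ with $e_i$ a separating edge of $\Gamma\smallsetminus v$. So it suffices to choose the split so that (a) $u'$ and $w'$ are joined in $\Gamma'\smallsetminus e'$, and (b) for every separating edge $f$ of $\Gamma\smallsetminus v$, the vertices $u'$ and $w'$ remain joined after also deleting $f'$. The numerical input coming from $3$-edge connectivity is that each separating edge $f$ of $\Gamma\smallsetminus v$ has at least two of the edges at $v$ on each of its two sides (otherwise $f$, possibly together with a single edge at $v$, would be a bridge or a separating pair of $\Gamma$), and that, when $v$ is a cut vertex, each connected component of $\Gamma\smallsetminus v$ carries at least three of the edges at $v$ (a component attached by one or two edges would again produce a bridge or a separating pair).

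The main obstacle is to realize (a) and (b) simultaneously over all separating edges of $\Gamma\smallsetminus v$ at once; this is a genuine constraint only when $v$ behaves like a cut vertex and $\Gamma\smallsetminus v$ has several bridges. I would handle it by organizing the bridges of $\Gamma\smallsetminus v$ into their block-cut tree and tracking how the $M\geq 4$ edges at $v$ distribute among the blocks: the slack just recorded (two edges to each side of every bridge, three to each component when $v$ is a cut vertex) is precisely what is needed to split the edges at $v$ so that $A$ and $B$ each reach ``enough'' of the tree to keep $u'$ and $w'$ joined after the deletion of any single bridge, while keeping both $|A|,|B|\geq 2$ and $A,B$ meeting a common component so that $e'$ is non-separating. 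When $\Gamma\smallsetminus v$ has no bridges the difficulty evaporates entirely: any split keeping $e'$ non-separating already yields a $3$-edge connected extension by Remark~\ref{3'3}, so the whole subtlety is concentrated in the cut-vertex case.
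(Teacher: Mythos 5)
Your overall architecture coincides with the paper's: the easy inclusion via preservation of edge-connectivity under contraction, the reduction of the hard inclusion (via Remark~\ref{sptrop} and an induction on the valence excess) to the single claim that a $3$-edge connected graph admits a $3$-edge connected valence-reducing extension at any vertex $v$ of valence $M\geq 4$, and the use of Remark~\ref{3'3} together with the two numerical consequences of $3$-edge connectivity that you correctly record (at least two edges at $v$ on each side of any separating edge of $\Gamma\smallsetminus v$; at least three edges at $v$ into each component of $\Gamma\smallsetminus v$ when $v$ is a cut vertex). Up to that point the proposal is sound. Two things are missing. The first is minor but real: the framework ``split the edges at $v$ into $A$ at $u'$ and $B$ at $w'$'' silently excludes loops based at $v$, which the paper allows and treats as a separate first step: a loop at $v$ has two endpoints to distribute and contributes nothing to any component of $\Gamma\smallsetminus v$, and the correct move is to turn it into a $u'$--$w'$ edge, after which Remark~\ref{3'3} disposes of the potential separating pair it creates.

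The second gap is the substantive one. The simultaneous realizability of your conditions (a) and (b) over \emph{all} separating edges of $\Gamma\smallsetminus v$ is precisely the content of the proposition, and you assert it (``the slack \dots is precisely what is needed'') rather than prove it; the block-cut-tree bookkeeping is announced but never carried out. The claim is true, but the paper's route around the simultaneity problem is a concrete idea you do not have: let $S_v$ be the set of separating edges of $\Gamma\smallsetminus v$ and arrange that $e'$ be non-separating in $\Gamma'\smallsetminus S_v'$, i.e.\ after deleting \emph{all} of those edges at once; this single condition is stronger than (b) for every individual separating edge, so no tree-tracking is needed. It is achieved by an explicit rule: each connected component of $\widetilde{\Gamma}\smallsetminus v$, where $\widetilde{\Gamma}$ is the connected component of $v$ in $\Gamma\smallsetminus S_v$, carries at least two edges at $v$, and sending one of them to $u'$ and one to $w'$ in every such component makes each component meet both new vertices, forces $\#A,\#B\geq 2$, and keeps $e'$ non-separating even after the deletion of $S_v'$. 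The analogous alternating assignment (one edge to one side, two to the other, switching parity across components) handles the case where $v$ is itself a cut vertex. Without this reduction, or a genuine proof of your distribution claim over the bridge tree, the decisive step of the argument remains unproven.
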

\begin{proof}
Recall that, by  Lemma~\ref{3-reg-3-conn}, a 
$3$-regular, $3$-edge connected graph is also $3$-connected.
Therefore,
by remark ~\ref{sptrop} it is enough to show the following. 
There exists a $3$-regular, $3$-edge connected graph  $\Gamma^*$
together with a surjective edge-contracting  map $\Gamma^* \to \Gamma$
such that $b_1(\Gamma)=b_1(\Gamma ^*)$ if and only if $\Gamma$ is $3$-edge connected.

The fact that
if $\Gamma^*$ is $3$-regular and  $3$-edge connected then $\Gamma$ is 
  $3$-edge connected   follows  from the easy fact that the edge connectivity 
cannot decrease by edge contraction.

Conversely, consider a $3$-edge connected graph $\Gamma$;
note  that  $\Gamma$ has valence at least $3$.
Let $M$ be the maximum valence of a vertex of $\Gamma$;  
if $M=3$ there is nothing to prove, so suppose $M\geq 4$. We shall prove that a $\Gamma ^*$ as above exists by 
constructing a $3$-edge connected
graph $\Gamma '$ which is an extension of   $\Gamma$,
and such that the number of $M$-valent vertices of $\Gamma'$   is  less than
that of $\Gamma$. It is clear that this will be enough.
Let $v$ be a vertex of valence $M$.

{\bf Step 1.}
Suppose that $v$ is the base of some  loop $e_{\ell}$ of $\Gamma$.
Call $G$ the complement in $\Gamma$ of $v$ and of all loops based at $v$ 

$$
G:=\Gamma \smallsetminus v\smallsetminus
 \bigcup_{e_{\ell} \text{ loop based at } v} e_{\ell}.
$$
If $G$ is not empty there are at least 3 edges
$e_1, e_2, e_3$ in  $E_v(\Gamma)$  contained in $G$ (as $\Gamma$ is $3$-edge connected,).
We apply an extension of  $\Gamma$ at $v$ so that  for every loop $e_{\ell}$ the corresponding
$e_{\ell}'$ joins $u'$ with $w'$ 
(notation in \ref{blowup}), $e_1'$ is adjacent to $u'$ and $e_2', e_3'$ are adjacent to $w'$.
If there are other edges $e_4,\ldots, e_m\in E_v(\Gamma)$,  we distribute
$e_4',\ldots, e_m'$
 between  $E_{u'}(\Gamma ')$  
and $E_{w'}(\Gamma ')$ arbitrarily.
It is clear that $u'$ and $w'$ have valence 
at most $M-1$.
Now notice that both $e'$ and $e_{\ell}'$
join $u'$ with $w'$. Therefore 
  $e'$ is not a separating edge of $\Gamma '$, and  there exists no separating pair of edges
  $(e', e'_i)$ unless $e_i'= e_{\ell}'$ for some loop $ e_{\ell}$ based at $v$. It is obvious that $ e_{\ell}$ is not a separating edge of $\Gamma \smallsetminus v$; by Remark~\ref{3'3} we have that $\Gamma '$ is $3$-edge connected,
  so we are done.

Repeating Step 1 we reduce to the case when $v$ is not the base of any loop.

{\bf Step 2.}
Consider $\Gamma \smallsetminus v$
and suppose it is not connected, i.e. $v$ is a separating vertex of $\Gamma$. Then
 $\Gamma \smallsetminus v$ contains no separating edges.
 Call $G_1,\ldots G_c$  the connected components of $\Gamma \smallsetminus v$.
 Since $\Gamma$ is $3$-edge connected every $G_i$ contains at least 3 edges
 adjacent to $v$; call them $e_1^i, e_2^i, e_3^i$.
 Now apply an extension of  $\Gamma $ at $v$ so that
for every odd $i\leq c$ (respectively for every even $i\leq c$)
 $e_1^i$ is adjacent to $u'$ (resp. to $w'$) and $e_2^i, e_3^i$ 
 are adjacent to $w'$ (resp. to $u'$). 
In this way $e'$ is not a separating edge of $\Gamma'$,
 and hence $\Gamma'$ is $3$-edge connected (by Remark~\ref{3'3}).
By distributing the remaining (if any)
 edges coming from $E_v(\Gamma)$ between $w'$ and $u'$ 
 we have that, since $c\geq 2$,  $u'$ and $w'$ have smaller valence than $v$.
So we are done.

{\bf Step 3.} Now suppose $\Gamma \smallsetminus v$ connected and free from separating edges.
Let $\Gamma'$ be a valence reducing  extension of $\Gamma$ at $v$ such that   both $u'$ and $w'$ are at least 3-valent.
Now $e'$ is not  a separating edge of $\Gamma '$ ($v$ is not a separting vertex of $\Gamma$) and hence
by Remark~\ref{3'3}, $\Gamma'$ ie  $3$-edge connected, as wanted..

{\bf Step 4.}
Finally, suppose  $\Gamma \smallsetminus v$ connected and admitting a non empty set,
 $S_v\subset E(\Gamma)$, of  
 separating edges. 
 An example of the construction we are going to illustrate is in Figure~\ref{blfig}.
 Note that no edge in $S_v$ is adjacent to $v$, for otherwise  $\Gamma \smallsetminus v$
would be disconnected. 
Also, for any extension of $\Gamma$ at $v$, the new edge $e'$ is not separating.

Consider the graph $\widetilde{\Gamma}:=\Gamma \smallsetminus S_v$.
The valence of $v$ as a  vertex of $\widetilde{\Gamma}$
is  again   $M\geq 4$.
If $\widetilde{\Gamma}$ fails to be connected, we replace it by the connected component containing $v$, which will not affect the conclusions.

It is clear that describing an extension  $\Gamma '$ of $\Gamma$ at $v$  is equivalent to describing
an extension $\widetilde{\Gamma}'$ of  $\widetilde{\Gamma}$ at $v$ (one just needs to glue back to
$\widetilde{\Gamma}'$ the edges in $S_v$ and the connected components 
of $\Gamma \smallsetminus S_v$ not containing $v$).
Now, for $\Gamma'$ to be $3$-edge connected we need to extend   $\widetilde{\Gamma}$  so that, for any $e_s\in S_v$,
the pair $(e',e_s')$ is not a separating pair of $\Gamma '$,
i.e. (since $e'$ is not a separating edge of $\Gamma'$)
so that $e'$ is not a separating edge of $\Gamma '\smallsetminus e_s'$.

The space  $\widetilde{\Gamma}\smallsetminus v$ is not connected 
and we proceed like for Step 2,   notice however that
 $\widetilde{\Gamma}$ is only $2$-edge connected.
Call $H_1,\ldots H_h$  the connected components of $\widetilde{\Gamma}\smallsetminus v$.
 As   $\widetilde{\Gamma}$ is $2$-edge connected each $H_i$ contains at least 2 edges
 adjacent to $v$; call them $e_1^i, e_2^i$.
 Now apply an extension of  $\widetilde{\Gamma}$ at $v$ so that
for every odd $i$ (resp. for every even $i$)
 $e_1^i$ is adjacent to $u'$ (resp. to $w'$) and $e_2^i$ 
is adjacent to $w'$ (resp. to $u'$). 
 By doing this,
$e'$ is not a separating edge of  $\widetilde{\Gamma}'=\Gamma'\setminus S_v$,
and hence a fortiori $e'$ is not a separating edge of $\Gamma '\smallsetminus e_s'$.
As we observed above, this implies that $\Gamma'$ is $3$-edge connected.
By distributing between $w'$ and $u'$ the  remaining 
 edges  
  we have that, as $h\geq 2$, the vertices $u'$ and $w'$ have smaller valence than $v$.

Since the removal of $S_v$, or of components of
$\Gamma \smallsetminus S_v$  not containing $v$, does not alter $\Gamma$ at $v$,
 we are done.
\end{proof}
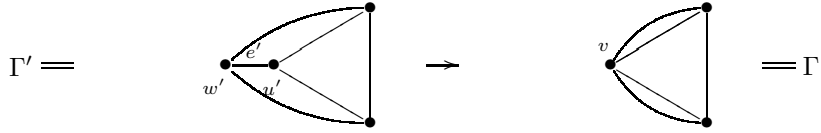
\begin{figure}[!htp]
$$\xymatrix@=1pc{
&&&& & && *{\bullet} \ar@{-}[dd] \ar@{-}@/_/[dlll]  &&&
&& &&*{\bullet} \ar@{-}[dd]\ar@{-}@/_/[dll]_(.9){v}&&\\
\Gamma '\ar@{=}[r]&&&&*{\bullet}\ar@{-}[r]^{\  e'}
\ar@{-}@/_/[drrr]_(0.05){w'}  &*{\bullet} \ar@{-}[urr]\ar@{-}[drr]_(0.1){u'}&&&
\ar@{->}[r]&& && *{\bullet}\ar@{-}@/_/[drr] \ar@{-}[urr]\ar@{-}[urr]\ar@{-}[drr] & && \ar@{=}[r]& \Gamma \\
&&&& & && *{\bullet} &&&
&& &&*{\bullet}&&
}$$
\caption{$3$-edge connected extension of $\Gamma$ at $v$.}
\label{blfig}
\end{figure}

\noindent {\it Proof of Theorem~\ref{MZ-conj}.}
We claim that the
generic point of $\Jgt$ is of the form $\tgt(C)$ for a $3$-regular and $3$-connected curve
$C$.
It is clear that the generic point of  $\Jgt$ is the image of some generic point of $\Mgt$,
i.e. of some $3$-regular curve.
Let $C$ be a $3$-regular curve not necessarily $3$-connected.
By Theorem \ref{main-thmt} we have $\tgt(C)=\tgt (C^3)$, where $C^3$
is the 3-edge connected curve associated to a $3$-edge connectivization 
(see \ref{3-connl}) of the metric graph of $C$.
By Proposition~\ref{spec3}, $C^3$ is the specialization of a $3$-regular,
$3$-connected curve.
Therefore $\tgt(C^3)=\tgt (C)$ is the specialization of the Torelli image of a 
$3$-regular $3$-connected curve.
This proves the claim.

Now, for a $3$-connected curve $C$, 
Theorem \ref{main-thmt} implies that $(\tgt)^{-1}$ $(\tgt(C))=\{C\}$ and therefore 
 condition (\ref{con-deg1})  of Definition
 \ref{tr-deg1} is satisfied.

Consider now the linear map $L_C$ of Assumption \ref{tg-ass}. 
By what we just proved we can take $C$ a $3$-connected, $3$-regular curve.
To prove that $L_C$
is primitive, we have to show that if the scalar product $(,)_l$ on $H_1(C, \R)$ takes integer
values on $H_1(C, \Z)$ then the length function $l$ takes also integer values.

Recall that, by  Corollary~\ref{cor3}, to say that $C$ is $3$-edge connected is to say that
for every edge $e$
of the graph $\Gamma$ associated to $C$ the set $\{e\}$ is a C1-set of $\Gamma$.
Hence, using Lemma \ref{intS}, we deduce that for every   $e$
 there exist two cycles, $\Delta_1$ and $\Delta_2$, of $\Gamma$ such that $\{e\}=E(\Delta_1)\cap E(\Delta_2)$. Therefore there exist two elements $c_1, c_2\in H_1(C,\Z)$
such that $l(e)=(c_1, c_2)_l$. Since  by assumption $(,)_l$ takes integer values on
$H_1(C, \Z)$ we get that $l(e)=(c_1,c_2)_l \in \Z$ for every edge $e$ of $\Gamma$.

The proof of Theorem~\ref{MZ-conj} is complete.
$\qed$

\end{document}